\newtheorem{thm}{Theorem}[section]
\newtheorem{prob}{Problem}[section]
\newtheorem{lem}[thm]{Lemma}
\newtheorem{cor}[thm]{Corollary}
\newtheorem{remark}{Remark}[section]
\newtheorem{exam}{Example}
\newtheorem{conj}{Conjecture}[section]
\newtheorem{claim}{Claim}[section]
\theoremstyle{definition}
\begin{document}
\title{The second eigenvalue of some normal Cayley graphs of high transitive groups\footnote{The second author is supported by NSFC  11531011 and 11671344, and the third author is supported by the NSF grants DMS-1600768 and CIF-1815922.}}
\author{{\small Xueyi Huang$^1$, \ \ Qiongxiang Huang$^2$\footnote{
Corresponding author.}\setcounter{footnote}{-1}\footnote{
\emph{E-mail address:} huangxymath@gmail.com (X. Huang), huangqx@xju.edu.cn (Q. Huang), cioaba@udel.edu (S.M. Cioab\u{a}).}}\ \ \small and  Sebastian M. Cioab\u{a}{$^3$}\\[2mm]
\scriptsize
$^1$School of Mathematics and Statistics, Zhengzhou University, Zhengzhou, Henan 450001, P. R. China\\
\scriptsize
$^2$College of Mathematics and Systems Science, Xinjiang University, Urumqi, Xinjiang 830046, P. R. China\\
\scriptsize
$^3$Department of Mathematical Sciences,  University of Delaware, 501 Ewing Hall, Newark, DE 19716, USA}

\date{}
\maketitle
{\flushleft\large\bf Abstract}
Let $\Gamma$ be a finite group acting transitively on $[n]=\{1,2,\ldots,n\}$, and  let $G=\mathrm{Cay}(\Gamma,T)$ be a Cayley graph of $\Gamma$. The graph $G$ is called  normal if $T$ is closed under conjugation. In this paper, we obtain an upper bound for the second (largest) eigenvalue of the adjacency matrix of the graph $G$ in terms of the second eigenvalues of certain subgraphs of $G$ (see Theorem 2.6). Using this result, we develop a recursive method to  determine the second eigenvalues of certain  Cayley graphs of $S_n$ and we determine the second eigenvalues  of a majority of the connected normal Cayley graphs (and some of their subgraphs) of $S_n$  with  $\max_{\tau\in T}|\mathrm{supp}(\tau)|\leq 5$, where $\mathrm{supp}(\tau)$ is the set of points in $[n]$ non-fixed by $\tau$. 

\begin{flushleft}
\textbf{Keywords:}  The second eigenvalue;  Normal Cayley graph; Symmetric group.
\end{flushleft}
\textbf{AMS Classification:} 05C50

\section{Introduction}\label{s-1}
Let $G=(V(G),E(G))$ be a simple undirected graph of order $n$ with adjacency matrix $A(G)$.   The eigenvalues of $A(G)$, denoted by $\lambda_1(G)\geq \lambda_2(G)\geq \cdots\geq \lambda_n(G)$, are also called the  \emph{eigenvalues} of $G$.  For a $k$-regular graph $G$, the spectral gap  $\lambda_1(G)-\lambda_2(G)=k-\lambda_2(G)$ is closely related to the connectivity and expansion properties of $G$ \cite{Alon,Alon1,Dodziuk,Fiedler,Lubotzky1,Mohar,Hoory}.

Let $\Gamma$ be  a finite group, and let $T$ be a subset of  $\Gamma$ such that $e\not\in T$ ($e$ is the identity element of $\Gamma$) and  $T=T^{-1}$. The \emph{Cayley graph} $\mathrm{Cay}(\Gamma,T)$ of $\Gamma$ with respect to  $T$ (called \emph{connection set})  is defined as the undirected graph with vertex set $\Gamma$ and  edge set $\{\{\gamma,\tau\gamma \}\mid\gamma\in\Gamma,\tau\in T\}$. Clearly,  $\mathrm{Cay}(\Gamma,T)$ is a regular graph which is connected if and only if $T$ is a generating  subset of  $\Gamma$. A Cayley graph $\mathrm{Cay}(\Gamma,T)$ is called \emph{normal} if $T$ is closed under conjugation.

Let $S_n$ be the symmetric group on $[n]=\{1,2,\ldots,n\}$ with $n\geq 3$, and  $T$ a subset of $S_n$ consisting of transpositions. The \emph{transposition graph} $\mathrm{Tra}(T)$ of $T$  is defined as the graph with vertex set $\{1,2,\ldots,n\}$ and with an edge connecting two vertices $i$ and $j$ if and only if $(i,j)\in T$. It is known that $T$ can generate $S_n$ if and only if  $\mathrm{Tra}(T)$ is connected \cite{Godsil1}. In 1992,  Aldous \cite{Aldous} (see also \cite{Friedman,Cesi}) conjectured that the spectral gap of $\mathrm{Cay}(S_n,T)$ is equal to  the algebraic connectivity (second least Laplacian eigenvalue) of $\mathrm{Tra}(T)$.  Earlier efforts of several researchers solved various special cases of Aldous' conjecture. For instance, Diaconis and Shahshahani \cite{Diaconis}, and Flatto, Odlyzko and Wales \cite{Flatto}   confirmed the conjecture for  $\mathrm{Tra}(T)$ being a complete graph and a star, respectively;   Handjani and Jungreis \cite{Handjani} confirmed the conjecture for $\mathrm{Tra}(T)$ being a tree; Friedman \cite{Friedman} proved that if $\mathrm{Tra}(T)$ is a bipartite graph then the spectral gap of $\mathrm{Cay}(S_n,T)$ is at most the algebraic connectivity of $\mathrm{Tra}(T)$; Cesi \cite{Cesi} confirmed the conjecture for $\mathrm{Tra}(T)$ being a complete multipartite graph.   At last, Caputo, Liggett and Richthammer \cite{Caputo} completely confirmed the conjecture in 2010, their proof is an ingenious combination of two ingredients: a nonlinear mapping in the group algebra 􏱆 􏰾􏱓$\mathbb{C}S_n$ which permits a proof by induction on $n$, and a quite complicated estimate named the octopus inequality  (see also \cite{Cesi2} for a self-contained algebraic proof).  Very recently, Cesi \cite{Cesi3} proved an analogous result of Aldous' conjecture (now theorem) for the Weyl group $W(B_n)$. Most of the above results rely heavily on the representation theory of the symmetric group $S_n$.

The second eigenvalues of Cayley graphs of the symmetric group $S_n$ or the alternating groups $A_n$ have been  determined also for some  special generators that  are not transpositions. For $1\leq i<j\leq n$, let $r_{i,j}\in S_n$ be defined as
$$r_{i,j}=\left(\begin{matrix}
1~\cdots~ i-1&i&i+1~ \cdots~ j-1&j&j+1~\cdots~ n\\
1~\cdots~ i-1&j&j-1 ~\cdots~ i+1&i&j+1~\cdots~ n\\
\end{matrix}\right).$$
In \cite{Cesi1}, Cesi  proved that the second eigenvalue of the pancake graph $\mathcal{P}_n=\mathrm{Cay}(S_n,$ $\{r_{1,j}\mid 2\leq j\leq n\})$ is equal to $n-2$. In \cite{Chung},  Chung and Tobin determined the second eigenvalues of the reversal graph $R_n=\mathrm{Cay}(S_n,\{r_{i,j}\mid 1\leq i<j\leq n\})$ and a family of graphs that generalize the pancake graph $\mathcal{P}_n$. In \cite{Parzanchevski}, Parzanchevski and Puder  proved that, for large enough $n$, if $S\subseteq S_n$ is a full conjugacy class generating $S_n$ then the second eigenvalue of $\mathrm{Cay}(S_n,S)$  is always associated with one of  eight low-dimensional representations of $S_n$. In \cite{Huang1}, the  authors  determined the second eigenvalues of the alternating group graph $AG_n=\mathrm{Cay}(A_n,\{(1,2,i),(1,i,2)\mid 3\leq i\leq n\})$ (introduced by Jwo, Lakshmivarahan and Dhall \cite{Jwo}), the extended alternating group graph $EAG_n=\mathrm{Cay}(A_n,\{(1,i,j),(1,j,i)\mid 2\leq i<j\leq n\})$ and the complete alternating group graph $CAG_n=\mathrm{Cay}(A_n,\{(i,j,k),(i,k,j)\mid 1\leq i<j<k\leq n\})$ (defined by Huang and Huang \cite{Huang}).

Suppose that $\Gamma$ is a finite group acting transitively  on $[n]$ and let $G=\mathrm{Cay}(\Gamma,T)$.  In the present paper, we first show that, for each $i\in [n]$, the left coset decomposition  of $\Gamma$ with respect to the stabilizer subgroup $\Gamma_i$  is  an equitable partition of $G$, and all these equitable partitions share the same quotient matrix $B_\Pi$. Based on this fact, we also prove that  those eigenvalues of $G$ not belonging to $B_\Pi$ can be bounded above by the sum of  second eigenvalues of some subgraphs of $G$. Now suppose further that $G$ is connected and normal, and that the action of $\Gamma$ on $[n]$ is of high transitivity. Using the previous result, we reduce the problem of proving  $\lambda_2(G)=\lambda_2(B_\Pi)$ to that of verifying the result for  some smaller graphs. This leads to a recursive procedure for determining the second eigenvalue of $G$. As applications, we determine the second eigenvalues of a majority of connected normal Cayley graphs  of $S_n$ with  $\max_{\tau\in T}|\mathrm{supp}(\tau)|\leq 5$ (see Theorem \ref{symmetric-thm} and Table \ref{tab-2}), where $\mathrm{supp}(\tau)$ is the set of points in $[n]$ non-fixed by $\tau$. There are $56$ families of such graphs, and we determine the second eigenvalues for $41$ families of them. In the process, we also determine the second eigenvalues of some subgraphs (over one hundred families) of these $41$ families of normal Cayley graphs. From these results we can determine the spectral gap of  $\mathrm{Cay}(S_n,\{(p,q)\mid 1\leq p,q\leq n\})$ (previously done by Diaconis and Shahshahani \cite{Diaconis}) and  $\mathrm{Cay}(S_n,\{(1,q)\mid 2\leq q\leq n\})$ (previously obtained by Flatto, Odlyzko and Wales \cite[Theorem 3.7]{Flatto}). We show that a recent conjecture of Dai \cite{Dai2} is true as a consequence of Aldous'  theorem and we discuss some related questions and open problems.

\section{Main tools}\label{s-2}
Let  $G$ be a graph on $n$ vertices. The vertex partition $\Pi:V(G)=V_1\cup V_2\cup\cdots\cup V_q$ is said to be an \emph{equitable partition} of $G$ if every vertex of $V_i$ has  the same number (denoted by $b_{ij}$) of neighbors in $V_j$, for all $i,j\in\{1,2,\ldots,q\}$. The matrix $B_\Pi=(b_{ij})_{q\times q}$ is  the \emph{quotient matrix} of $G$ with respect to $\Pi$, and the $n\times q$ matrix $\chi_\Pi$ whose columns are the characteristic  vectors of $V_1,\ldots,V_q$ is  the \emph{characteristic matrix} of  $\Pi$.

\begin{lem}[Brouwer and Haemers \cite{Brouwer}, p. 30;  Godsil and Royle \cite{Godsil1}, pp. 196--198]\label{quotient} 
Let $G$ be a graph with adjacency matrix $A(G)$, and let $\Pi:V(G)=V_1\cup V_2\cup\cdots\cup V_q$ be an equitable partition of $G$ with quotient matrix $B_{\Pi}$. Then the eigenvalues of $B_\Pi$ are also eigenvalues of $A(G)$. Furthermore, $A(G)$ has the following two kinds of eigenvectors:
\begin{enumerate}[(i)]
\vspace{-0.2cm}
\item the eigenvectors in the column space of $\chi_\Pi$, and the corresponding eigenvalues coincide
with the eigenvalues of $B_\Pi$;
\vspace{-0.2cm}
\item the eigenvectors orthogonal to the columns of $\chi_\Pi$, i.e., those eigenvectors that sum to zero on each block $V_i$ for $1\leq i\leq q$.
\end{enumerate}
\end{lem}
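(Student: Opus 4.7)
My plan is to establish the intertwining identity $A(G)\chi_\Pi = \chi_\Pi B_\Pi$ first, and then read both parts of the lemma off it together with the fact that a real symmetric matrix preserves orthogonal complements of invariant subspaces.

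To prove the identity I would compare entries. Fix a vertex $v\in V_i$ and a column index $j\in\{1,\ldots,q\}$. The $(v,j)$-entry of $A(G)\chi_\Pi$ is $\sum_{u\in V_j}A(G)_{v,u}$, which is the number of neighbors of $v$ lying in $V_j$; by the definition of an equitable partition this equals $b_{ij}$. On the other hand, since $v$ lies in exactly one block $V_i$, the $(v,j)$-entry of $\chi_\Pi B_\Pi$ equals $(B_\Pi)_{ij}=b_{ij}$. Hence $A(G)\chi_\Pi=\chi_\Pi B_\Pi$.

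Part (i) then follows at once: if $B_\Pi x=\mu x$, then $A(G)(\chi_\Pi x)=\chi_\Pi B_\Pi x=\mu\,\chi_\Pi x$, and because the blocks $V_1,\ldots,V_q$ are nonempty and disjoint, the columns of $\chi_\Pi$ are linearly independent, so $\chi_\Pi x\neq 0$. This simultaneously shows that each eigenvalue of $B_\Pi$ is an eigenvalue of $A(G)$ and that the corresponding eigenvectors lie in the column space of $\chi_\Pi$. For part (ii), the identity shows that $W:=\mathrm{col}(\chi_\Pi)$ is $A(G)$-invariant; since $A(G)$ is real symmetric, $W^\perp$ is also $A(G)$-invariant, and the spectral theorem applied separately to $A(G)|_W$ and $A(G)|_{W^\perp}$ produces an orthonormal eigenbasis of $A(G)$ that splits into vectors in $W$ and vectors in $W^\perp$. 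Finally, $y\in W^\perp$ precisely means $y$ is orthogonal to each characteristic vector $\chi_{V_i}$, i.e.\ $\sum_{v\in V_i}y_v=0$ for every $i$, which is the description in (ii).

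There is no serious obstacle: once the identity $A(G)\chi_\Pi=\chi_\Pi B_\Pi$ is in hand the rest is standard linear algebra. The only subtle point to keep in mind is that when an eigenvalue of $A(G)$ appears with contributions from both $W$ and $W^\perp$, one must use the invariance of both subspaces (not just choose an arbitrary eigenbasis of $A(G)$) to ensure the eigenvectors in (i) and (ii) partition a full eigenbasis.
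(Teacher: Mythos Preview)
Your proof is correct and is precisely the standard argument found in the references the paper cites (Brouwer--Haemers and Godsil--Royle); the paper itself does not supply a proof of this lemma but simply quotes it from those textbooks. The intertwining relation $A(G)\chi_\Pi=\chi_\Pi B_\Pi$ together with the invariance of $W=\mathrm{col}(\chi_\Pi)$ and $W^\perp$ under the symmetric matrix $A(G)$ is exactly how the cited sources establish the result.
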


For regular graphs, we have the following  useful result.

\begin{thm}\label{reg-thm}
Let $G$ be a $r$-regular graph, and let $\lambda$ ($\lambda\neq r$) be an eigenvalue of $G$.  If $G$ has  an eigenvector $f$ with respect to  $\lambda$ and a vertex partition $\Pi:V(G)=V_1\cup V_2\cup\cdots\cup V_q$ such that $G[V_i]$ is $r_1$-regular ($r_1\leq r$)  and $f$ sums to zero on $V_i$ for all $i\in\{1,2,\ldots,q\}$, then
$$\lambda\leq \max_{1\leq i\leq q}\lambda_2(G[V_i])+\lambda_2(G_1),$$
where $G_1$ is the $(r-r_1)$-regular graph obtained from $G$ by removing all edges in $\cup_{i=1}^qE(G[V_i])$.
\end{thm}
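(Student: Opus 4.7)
The plan is to split the adjacency matrix as $A(G)=A(G_0)+A(G_1)$, where $G_0=\bigsqcup_{i=1}^q G[V_i]$ is the disjoint union of the induced subgraphs on the blocks (an $r_1$-regular graph on $V(G)$) and $G_1$ collects the remaining edges of $G$ (an $(r-r_1)$-regular graph by hypothesis). Since $f$ is a $\lambda$-eigenvector with $\lambda\neq r$, it is nonzero, and the Rayleigh quotient splits as
$$\lambda=\frac{f^T A(G)f}{f^T f}=\frac{f^T A(G_0)f}{f^T f}+\frac{f^T A(G_1)f}{f^T f}.$$
The strategy is then to bound each summand separately via the Courant--Fischer principle, using the two orthogonality conditions that come from the hypothesis.

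For the first summand, the key observation is that since each $G[V_i]$ is $r_1$-regular, the indicator vector $\mathbf{1}_{V_i}$ is a top eigenvector of $A(G[V_i])$ with eigenvalue $r_1$. By hypothesis, $f|_{V_i}$ sums to zero, i.e.\ $f|_{V_i}\perp\mathbf{1}_{V_i}$, so the Rayleigh principle applied block by block gives $f|_{V_i}^T A(G[V_i])f|_{V_i}\leq \lambda_2(G[V_i])\,\|f|_{V_i}\|^2$. Summing over $i$ and factoring out $\max_i\lambda_2(G[V_i])$ (using $\sum_i\|f|_{V_i}\|^2=\|f\|^2$) yields $f^T A(G_0)f\leq \max_i\lambda_2(G[V_i])\,\|f\|^2$.

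For the second summand, note that $G_1$ is $(r-r_1)$-regular, so the all-ones vector $\mathbf{1}$ is a top eigenvector of $A(G_1)$ with eigenvalue $r-r_1$. Since $f$ sums to zero on each block, it sums to zero on all of $V(G)$, so $f\perp\mathbf{1}$. A second application of the Rayleigh principle gives $f^T A(G_1)f\leq \lambda_2(G_1)\,\|f\|^2$. Adding the two bounds and dividing by $\|f\|^2$ produces the desired inequality.

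There is no real obstacle; the only fine point to double-check is the case when $G_1$ is disconnected, in which $\lambda_1(G_1)=\lambda_2(G_1)=r-r_1$ and the orthogonality $f\perp\mathbf{1}$ only orthogonalizes $f$ against \emph{one} of the top eigenvectors. But in that case the Rayleigh bound $f^T A(G_1)f\leq \lambda_1(G_1)\,\|f\|^2=\lambda_2(G_1)\,\|f\|^2$ holds trivially, so the inequality is unaffected. A similar remark applies to any $G[V_i]$ that is disconnected. Thus the proof reduces to a clean two-fold use of the Rayleigh--Ritz variational characterization.
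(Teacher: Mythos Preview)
Your proof is correct and follows essentially the same approach as the paper: split $A(G)=A(G_0)+A(G_1)$, apply the Rayleigh quotient, and bound each term using the orthogonality conditions $f|_{V_i}\perp\mathbf{1}_{V_i}$ and $f\perp\mathbf{1}$. Your treatment of the summation for the first term (bounding each block by $\lambda_2(G[V_i])\|f|_{V_i}\|^2$ and then taking the max) is slightly cleaner than the paper's mediant-type inequality, and your explicit remark about the disconnected case is a nice addition, but the underlying argument is the same.
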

\begin{proof}
By assumption, the induced subgraphs $G[V_i]$ share the same degree $r_1$, so $G_1$ is  $(r-r_1)$-regular because $G$ is $r$-regular.  Also, the eigenvector  $f$  of $\lambda$ sums to zero on $V_i$ for  each $i$. Set $E_1=\cup_{i=1}^q E(G[V_i])$ and $E_2=E(G)\setminus E_1=E(G_1)$.  By the Rayleigh quotient, we obtain
\begin{equation}\label{eq-1}
\begin{aligned}
\lambda&=\frac{f^TA(G)f}{f^Tf}\\
&=\frac{\displaystyle 2\sum_{\{x,y\}\in E(G)}f(x)f(y)}{\displaystyle \sum_{x\in V(G)}f(x)^2}\\
&=\frac{\displaystyle 2\sum_{\{x,y\}\in E_1}f(x)f(y)}{\displaystyle \sum_{x\in V(G)}f(x)^2}+\frac{\displaystyle 2\sum_{\{x,y\}\in E_2}f(x)f(y)}{\displaystyle \sum_{x\in V(G)}f(x)^2}.
\end{aligned}
\end{equation}
For the first term, we have
\begin{equation}\label{eq-2}
\begin{aligned}
\frac{\displaystyle 2\sum_{\{x,y\}\in E_1}f(x)f(y)}{\displaystyle \sum_{x\in V(G)}f(x)^2}&=\frac{\displaystyle \sum_{i=1}^q2\sum_{\{x,y\}\in E(G[V_i])}f(x)f(y)}{\displaystyle \sum_{i=1}^q\sum_{x\in V_i}f(x)^2}\\
&\leq \max_{\begin{smallmatrix}1\leq i\leq q\\f|_{V_i}\neq 0\end{smallmatrix}}\frac{\displaystyle 2\sum_{\{x,y\}\in E(G[V_i])}f(x)f(y)}{\displaystyle \sum_{x\in V_i}f(x)^2}\\
&= \max_{\begin{smallmatrix}1\leq i\leq q\\f|_{V_i}\neq 0\end{smallmatrix}}\frac{f|_{V_i}^TA(G[V_i])f|_{V_i}}{f|_{V_i}^Tf|_{V_i}}\\
&\leq \max_{\begin{smallmatrix}1\leq i\leq q\\f|_{V_i}\neq 0\end{smallmatrix}}\max_{g\perp \mathbf{1}_{V_i}}\frac{g^TA(G[V_i])g}{g^Tg}\\
&=\max_{\begin{smallmatrix}1\leq i\leq q\\f|_{V_i}\neq 0\end{smallmatrix}}\lambda_2(G[V_i])\\
&\leq \max_{1\leq i\leq q}\lambda_2(G[V_i]),
\end{aligned}
\end{equation}
where $f|_{V_i}$ is the restriction  of $f$ on $V_i$, $\mathbf{1}_{V_i}$ is the all ones vector on $V_i$, and  the second inequality follows from  $\sum_{x\in V_i}f(x)=0$ ($1\leq i\leq q$). For the second term, since $G_1$ is regular and $f$ is orthogonal to the all ones vector $\mathbf{1}$, we have
\begin{equation}\label{eq-3}
\hspace{1cm}
\begin{aligned}
\frac{\displaystyle 2\sum_{\{x,y\}\in E_2}f(x)f(y)}{\displaystyle \sum_{x\in V(G)}f(x)^2}=\frac{f^TA(G_1)f}{f^Tf}\leq\max_{h\perp\mathbf{1}}\frac{h^TA(G_1)h}{h^Th}=\lambda_2(G_1).
\end{aligned}
\end{equation}
Combining  (\ref{eq-1}),  (\ref{eq-2}) and  (\ref{eq-3}), we conclude that
\begin{equation*}
\lambda\le \max_{1\leq i\leq q}\lambda_2(G[V_i])+\lambda_2(G_1),
\end{equation*}
and the result follows.
\end{proof}

If the partition $\Pi:V(G)=V_1\cup V_2\cup\cdots\cup V_q$ is exactly an equitable partition of $G$ with quotient matrix $B_\Pi$, then the eigenvectors of $G$ with respect to those eigenvalues other than that  of $B_\Pi$  must sum to zero on each $V_i$ by Lemma \ref{quotient}. From  Theorem \ref{reg-thm} one can immediately deduce the following result.

\begin{cor}\label{reg-cor}
Let $G$ be a $r$-regular graph. Assume that $\Pi:V(G)=V_1\cup V_2\cup\cdots\cup V_q$ is an equitable partition of $G$ whose quotient matrix $B_\Pi$ has constant diagonal entries. Then, for any eigenvalue  $\lambda$ of $G$ that is not that of  $B_\Pi$, we have
\begin{equation*}
\lambda\le \max_{1\leq i\leq q}\lambda_2(G[V_i])+\lambda_2(G_1),
\end{equation*}
where $G_1$ is the graph obtained from $G$ by removing all edges in $\cup_{i=1}^qE(G[V_i])$.
\end{cor}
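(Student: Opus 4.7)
The plan is to derive Corollary \ref{reg-cor} directly from Theorem \ref{reg-thm} by checking that its hypotheses are met. First I would observe that the assumption ``$B_\Pi$ has constant diagonal entries'' means there is a single value $r_1$ with $b_{ii}=r_1$ for every $i$. Because $b_{ii}$ equals the number of neighbors inside $V_i$ of any vertex of $V_i$, each induced subgraph $G[V_i]$ is $r_1$-regular with the same $r_1$, and clearly $r_1\le r$. Moreover, since $G$ is $r$-regular, every row sum of $B_\Pi$ equals $r$, so $r$ is itself an eigenvalue of $B_\Pi$. Hence any eigenvalue $\lambda$ of $G$ that is not an eigenvalue of $B_\Pi$ automatically satisfies $\lambda\ne r$.

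Next, I would invoke Lemma \ref{quotient}: every eigenvector of $A(G)$ either lies in the column space of $\chi_\Pi$, in which case its eigenvalue is also an eigenvalue of $B_\Pi$, or it is orthogonal to all columns of $\chi_\Pi$, i.e.\ it sums to zero on each block $V_i$. Since by hypothesis $\lambda$ is not an eigenvalue of $B_\Pi$, the first alternative is ruled out for every eigenvector $f$ of $G$ associated with $\lambda$. Consequently, any such $f$ must sum to zero on each $V_i$.

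With $\lambda\ne r$, an eigenvector $f$ summing to zero on every $V_i$, and each $G[V_i]$ being $r_1$-regular for a common $r_1\le r$, all hypotheses of Theorem \ref{reg-thm} are satisfied. Applying that theorem yields
$$\lambda\le \max_{1\leq i\leq q}\lambda_2(G[V_i])+\lambda_2(G_1),$$
which is exactly the desired inequality. There is no substantive obstacle: the only points to notice are that constant diagonal entries of $B_\Pi$ supply the uniform internal regularity required by Theorem \ref{reg-thm}, and that Lemma \ref{quotient} automatically delivers the zero-sum condition on the eigenvector whenever $\lambda$ lies outside the spectrum of $B_\Pi$.
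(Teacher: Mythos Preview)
Your proposal is correct and follows essentially the same approach as the paper: the paper also derives the corollary directly from Theorem \ref{reg-thm} after observing, via Lemma \ref{quotient}, that any eigenvector for an eigenvalue outside the spectrum of $B_\Pi$ must sum to zero on each block. You have simply spelled out in more detail the points (uniform $r_1$-regularity from constant diagonal, and $\lambda\ne r$) that the paper leaves implicit.
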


Here we give an example to show how to use the result of Corollary \ref{reg-cor}.

\begin{exam}\label{Expander}
\emph{Let $H_1,H_2$ be two connected $k$-regular graphs on $n$ vertices. Let $G$ be the  graph (not unique) obtained from $H_1\cup H_2$ by adding  some new edges between $H_1$ and $H_2$ such that these edges  form a $r$-regular bipartite graph $G_1$ ($G_1$ is easy to  construct, cf. \cite{Huang2}, Lemma 3.2). Clearly, $G$ is a  connected $(k+r)$-regular graph. Let $V_1$ and $V_2$ be the vertex subsets of $G$ corresponding to $H_1$ and $H_2$, respectively. Then $V(G)=V_1\cup V_2$ is clearly an equitable partition of $G$ with quotient matrix
$$
B_\Pi=
\left[
\begin{matrix}
k&r\\
r&k
\end{matrix}
\right].
$$
Since  $\lambda_2(G_1)\leq r$, each eigenvalue of $G$ not belonging to $B_\Pi$ is bounded above by
$\max\{\lambda_2(H_1),\lambda_2(H_2)\}+r$ according to Corollary \ref{reg-cor}. As $\lambda_2(B_\Pi)=k-r$, we conclude that $$k-r\leq \lambda_2(G)\leq \max\{\max\{\lambda_2(H_1),\lambda_2(H_2)\}+r,k-r\}.$$
Note that the above bounds could be tight. Take $H_1=H_2=Q_n$, the $n$-dimensional hypercube, and let $G$ be the graph (not unique) obtained from $H_1\cup H_2$ by adding a perfect matching between $H_1$ and $H_2$ (such graphs contain the $(n+1)$-dimensional locally twisted cubes, cf. \cite{Yang}).  Since $\lambda_2(Q_n)=n-2$ (cf. \cite{Brouwer}, p. 19), we have 
$$n-1\leq \lambda_2(G)\leq \max\{\lambda_2(Q_n)+1,n-1\}=n-1,$$
and thus $\lambda_2(G)=n-1$, which attains the lower bound. Also,  the Cartesian product $C_n\square K_2$, which can be regarded as the graph obtained by adding a perfect matching between two copies of $C_n$, has second eigenvalue $2\cos\frac{2\pi}{n}+1=\lambda_2(C_n)+1$, and so attains the upper bound.
}
\end{exam}

By using Theorem \ref{reg-thm}, in what follows, we focus on providing upper bounds for some special eigenvalues of Cayley graphs. Before doing this, we need to do some preparatory work. First  of all, we give the following useful result, which  suggests that each Cayley graph has an equitable partition derived from left coset decomposition.
\begin{lem}\label{Cay-equitable}
Let $\Gamma$ be a finite group, and let $\mathrm{Cay}(\Gamma,T)$ be a Cayley graph of $\Gamma$. Then the set of left cosets of any subgroup $\Theta$ of $\Gamma$ gives an equitable partition of $\mathrm{Cay}(\Gamma,T)$.
\end{lem}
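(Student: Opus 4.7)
The plan is a direct vertex-by-vertex verification. Fix any subgroup $\Theta\le\Gamma$ and let $\Pi$ be the partition of $V(\mathrm{Cay}(\Gamma,T))=\Gamma$ into left cosets $\{g\Theta:g\in\Gamma\}$. To prove that $\Pi$ is equitable, I will show that for any two cosets $C_1=g_1\Theta$ and $C_2=g_2\Theta$ and any vertex $\gamma\in C_1$, the number $|N(\gamma)\cap C_2|$ depends only on $C_1$ and $C_2$, not on the choice of $\gamma$.

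The substantive step is to write an arbitrary $\gamma\in C_1$ as $\gamma=g_1\theta$ with $\theta\in\Theta$ and to use the description $N(\gamma)=\{\tau\gamma:\tau\in T\}$ of the neighborhood in the Cayley graph. Then
\[
|N(\gamma)\cap C_2|=|\{\tau\in T:\tau g_1\theta\in g_2\Theta\}|,
\]
and I will invoke the absorption identity $\tau g_1\theta\in g_2\Theta\Longleftrightarrow \tau g_1\in g_2\Theta\theta^{-1}=g_2\Theta$, which holds because $\theta\in\Theta$. The rewritten condition does not involve $\theta$, so the count equals $|\{\tau\in T:\tau g_1\in g_2\Theta\}|$, a quantity depending only on the cosets $C_1$ and $C_2$; this is exactly what is required for equitability, namely that the entries $b_{ij}$ of the quotient matrix are well-defined.

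There is essentially no obstacle here; the entire argument reduces to the one-line manipulation $g_2\Theta\theta^{-1}=g_2\Theta$. Conceptually, the same fact can be phrased as the statement that right multiplication by any element of $\Theta$ is a graph automorphism of $\mathrm{Cay}(\Gamma,T)$ that fixes every left coset of $\Theta$ setwise, which forces the pattern of edges between any two cosets to be uniform across representatives of the same coset.
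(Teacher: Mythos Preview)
Your proof is correct and follows essentially the same approach as the paper: write $\gamma=g_1\theta$, use $N(\gamma)=T\gamma$, and absorb $\theta$ into the coset $g_2\Theta$ to see that the count is independent of $\theta$. The paper records the resulting quantity as $|T\cap g_2\Theta g_1^{-1}|$, which is just a rewriting of your $|\{\tau\in T:\tau g_1\in g_2\Theta\}|$.
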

\begin{proof}
Suppose that  $\Pi: \Gamma=\gamma_1\Theta\cup\gamma_2\Theta\cup\cdots\cup \gamma_k\Theta$ is the left coset decomposition of $\Gamma$ with respect to $\Theta$, where $k=|\Gamma|/|\Theta|$ and $\gamma_1,\ldots,\gamma_k$ are the representation elements.  Clearly, $\Pi$ is a vertex partition of $\mathrm{Cay}(\Gamma,T)$. For any $\gamma\in \gamma_i\Theta$, we have $\gamma=\gamma_i\theta$ for some $\theta\in\Theta$, and therefore
\begin{equation*}
|N(\gamma)\cap \gamma_j\Theta|=|N(\gamma_i\theta)\cap \gamma_j\Theta|=|(T\gamma_i\theta)\cap \gamma_j\Theta|=|T\cap (\gamma_j\Theta\theta^{-1}\gamma_i^{-1})|=|T\cap (\gamma_j\Theta\gamma_i^{-1})|,
\end{equation*}
which is  independent on the choice of $\gamma\in \gamma_i\Theta$. Thus $\Pi$ is exactly an equitable partition of $\mathrm{Cay}(\Gamma,T)$, and the result follows.
\end{proof}

Let $\Omega$ be a nonempty set, and let $\Gamma$ be a group acting on $\Omega$. We say that the  action of $\Gamma$ on $\Omega$ ($|\Omega|\geq s$) is \emph{$s$-transitive} if  for all pairwise distinct $x_1, \ldots, x_s\in \Omega$ and pairwise distinct $y_1, \ldots, y_s\in \Omega$ there exists some $\gamma\in \Gamma$ such that $x_i^\gamma=y_i$ for $1 \leq i \leq  s$. Clearly, a $s$-transitive action is always $t$-transitive for any $t<s$. In particular, we say that the action is \emph{transitive}  if it is $1$-transitive. As usual, we denote by $\Gamma_x=\{\gamma\in \Gamma\mid x^\gamma=x\}$  the \emph{stabilizer subgroup} of $\Gamma$ with respect to $x\in \Omega$.

Now suppose that $\Gamma$ is a finite group acting transitively on  $[n]=\{1,2,\ldots,n\}$. For each fixed $i\in[n]$, we have  $|\Gamma|/|\Gamma_i|=n$ by the orbit-stabilizer theorem, and furthermore, we see that $\Gamma$ has left coset decomposition
\begin{equation}\label{eq-4}
\Pi_i:\Gamma=\gamma_{1,i}\Gamma_i\cup \gamma_{2,i}\Gamma_i\cup \cdots\cup \gamma_{n,i}\Gamma_i=\Gamma_{1,i}\cup \Gamma_{2,i}\cup \cdots \cup \Gamma_{n,i},
\end{equation}
where $\gamma_{j,i}$ is an arbitrary element in $\Gamma$ that maps $j$ to $i$ and
\begin{equation*}\label{eq-5}
\Gamma_{j,i}=\gamma_{j,i}\Gamma_i=\{\gamma\in \Gamma\mid j^\gamma=i\},
\end{equation*}
for all $j\in [n]$. Clearly, $|\Gamma_{j,i}|=|\Gamma_i|=|\Gamma|/n$.

Let $G=\mathrm{Cay}(\Gamma,T)$ be a Cayley graph of $\Gamma$. According to Lemma \ref{Cay-equitable}, for each $i\in [n]$, the left coset decomposition $\Pi_i$ given in  (\ref{eq-4}) is an equitable partition of $G$ with quotient matrix $B_{\Pi_i}=(b_{st})_{n\times n}$, where
\begin{equation}\label{eq-6}
b_{st}=|T\cap \gamma_{t,i}\Gamma_i\gamma_{s,i}^{-1}|=|T\cap \Gamma_{t,s}|
\end{equation}
is exactly the number of elements in $T$ mapping $t$ to $s$. Since $b_{st}=|T\cap \Gamma_{t,s}|$ is independent on the choice of $i$, all the equitable partitions $\Pi_i$ share the same quotient matrix. For this reason,  we use  $B_{\Pi}$ instead of $B_{\Pi_i}$. Also, by counting the edges between $\Gamma_{s,i}$ and $\Gamma_{t,i}$ in two ways, we obtain $b_{st}\cdot|\Gamma_{s,i}|=b_{ts}\cdot|\Gamma_{t,i}|$, which implies that $b_{st}=b_{ts}$ because $|\Gamma_{s,i}|=|\Gamma_{t,i}|=|\Gamma|/n$. Therefore, $B_{\Pi}=(b_{st})_{n\times n}$ is symmetric.

For any fixed $k\in [n]$, we also can partition the vertex set of $G$  as another form
\begin{equation}\label{eq-7}
\Pi'_k:\Gamma=\Gamma_{k,1}\cup \Gamma_{k,2}\cup \cdots\cup \Gamma_{k,n},
\end{equation}
which is exactly the right coset decomposition of $\Gamma$ with respect to $\Gamma_k$. In general, $\Pi'_k$ is not an equitable partition of $G$. As in Theorem \ref{reg-thm}, we can   decompose the edge set of $G$ into $E(G)=E_1\cup E_2$, where $E_1=\cup_{i=1}^n E(G[\Gamma_{k,i}])$ and $E_2=E(G)\setminus E_1$. Let $G_1$ denote the spanning subgraph of $G$ with edge set $E_2$. The following lemma determines the structure of $G_1$ and  $G[\Gamma_{k,i}]$ for all $i\in [n]$.

\begin{lem}\label{structure-lem}
For any fixed $k\in [n]$, we have
\begin{enumerate}[(i)]
\item $G[\Gamma_{k,i}]\cong \mathrm{Cay}(\Gamma_k,T\cap \Gamma_k)$ for all $i\in [n]$;
\item $G_1=\mathrm{Cay}(\Gamma,T\setminus (T\cap \Gamma_k))$.
\end{enumerate}
\end{lem}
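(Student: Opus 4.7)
The plan is to work directly from the definition $\Gamma_{k,i}=\{\gamma\in\Gamma\mid k^{\gamma}=i\}$ and translate each piece of structure into statements about the connection set $T$. The key preliminary observation is that although the paper introduced the blocks $\Gamma_{k,i}$ as \emph{left} cosets of $\Gamma_i$, for the partition $\Pi'_k$ (with $k$ fixed) they are simultaneously \emph{right} cosets of $\Gamma_k$: indeed, for any $g_i\in\Gamma$ with $k^{g_i}=i$ (for instance $g_i=\gamma_{k,i}$) and any $\rho\in\Gamma_k$, one has $k^{\rho g_i}=(k^{\rho})^{g_i}=k^{g_i}=i$, so $\Gamma_k g_i\subseteq\Gamma_{k,i}$; equality follows from $|\Gamma_k g_i|=|\Gamma_k|=|\Gamma|/n=|\Gamma_{k,i}|$.

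Second, I would characterize which edges of $G$ lie inside a single block. An edge $\{\alpha,\tau\alpha\}$ of $G$ has both endpoints in the same $\Gamma_{k,i}$ precisely when $k^{\alpha}=k^{\tau\alpha}=i$. Expanding $k^{\tau\alpha}=(k^{\tau})^{\alpha}$ and using that $\alpha$ acts as a bijection on $[n]$, this is equivalent to $k^{\tau}=k$, i.e.\ $\tau\in\Gamma_k$. Consequently, the edges of $G[\Gamma_{k,i}]$ are exactly the pairs $\{\alpha,\tau\alpha\}$ with $\alpha\in\Gamma_{k,i}$ and $\tau\in T\cap\Gamma_k$, while $E_2=E(G)\setminus E_1$ consists exactly of the pairs $\{\gamma,\tau\gamma\}$ with $\tau\in T\setminus(T\cap\Gamma_k)$.

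To finish part (i), I would define $\phi_i:\Gamma_k\to\Gamma_{k,i}$ by $\phi_i(x)=xg_i$. Right multiplication by $g_i$ is a bijection, and the edge characterization above gives
\[
\{x,\tau x\}\in E(\mathrm{Cay}(\Gamma_k,T\cap\Gamma_k))\iff\{xg_i,\tau x g_i\}=\{\phi_i(x),\tau\phi_i(x)\}\in E(G[\Gamma_{k,i}]),
\]
so $\phi_i$ is a graph isomorphism. For (ii), before concluding $G_1=\mathrm{Cay}(\Gamma,T\setminus(T\cap\Gamma_k))$ from the edge characterization, I would briefly verify that $T\setminus(T\cap\Gamma_k)$ is a legitimate connection set: since $\Gamma_k$ is a subgroup and $T=T^{-1}$, the subset $T\cap\Gamma_k$ is closed under inversion, hence so is its complement in $T$, and $e\notin T$ forces $e\notin T\setminus(T\cap\Gamma_k)$.

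There is no serious obstacle here; the work is essentially bookkeeping. The only point requiring care is the convention switch between left and right: edges of $\mathrm{Cay}(\Gamma,T)$ are defined by left multiplication by elements of $T$, whereas the action $x\mapsto x^{\gamma}$ is a right action, and the stabilizer blocks $\Gamma_{k,i}$ turn out to be right cosets of $\Gamma_k$ rather than left cosets. Once the identity $k^{\tau\alpha}=(k^{\tau})^{\alpha}$ is used to pin down which $\tau\in T$ contribute intra-block edges, both conclusions follow by direct verification.
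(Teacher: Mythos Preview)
Your proof is correct and, at bottom, uses the same isomorphism as the paper: the map in (i) is right multiplication by a fixed element sending $k$ to $i$ (the paper writes it going the other way as $\gamma_{k,i}\gamma\mapsto\gamma_{k,i}\gamma\gamma_{k,i}^{-1}$, which is just $\alpha\mapsto\alpha\gamma_{k,i}^{-1}$). Where you differ is in packaging. The paper retains the left-coset description $\Gamma_{k,i}=\gamma_{k,i}\Gamma_i$ and verifies edge preservation by a direct chain of equivalences, and then treats (ii) by a separate argument on edges between distinct blocks. You instead first reinterpret $\Gamma_{k,i}$ as the \emph{right} coset $\Gamma_k g_i$ and isolate the single observation that an edge $\{\alpha,\tau\alpha\}$ lies inside one block iff $\tau\in\Gamma_k$; this front-loaded edge characterization makes both (i) and (ii) drop out at once, and the isomorphism in (i) becomes transparent because right translation always preserves left-multiplication Cayley edges. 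Your explicit check that $T\setminus(T\cap\Gamma_k)$ is inverse-closed and identity-free is a point of rigor the paper leaves implicit.
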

\begin{proof}
For (i), the corresponding isomorphism  can be defined as
$$\begin{aligned}
\phi: \Gamma_{k,i}=\gamma_{k,i}\Gamma_i&\to \gamma_{k,i}\Gamma_i\gamma_{k,i}^{-1}=\Gamma_k\\
\gamma_{k,i}\gamma&\mapsto \gamma_{k,i}\gamma\gamma_{k,i}^{-1}, ~~\forall \gamma\in\Gamma_i.
\end{aligned}
$$
Clearly, $\phi$ is one-to-one and onto. Furthermore, we have
$$\begin{aligned}
\{\gamma_{k,i}\gamma,\gamma_{k,i}\gamma'\}\in E(G[\Gamma_{k,i}])&\Longleftrightarrow\gamma_{k,i}\gamma'(\gamma_{k,i}\gamma)^{-1}\in T\\
&\Longleftrightarrow \gamma_{k,i}\gamma'\gamma^{-1}\gamma_{k,i}^{-1}\in T\cap \gamma_{k,i}\Gamma_i\gamma_{k,i}^{-1}=T\cap \Gamma_k\\
&\Longleftrightarrow \gamma_{k,i}\gamma'\gamma_{k,i}^{-1}(\gamma_{k,i}\gamma\gamma_{k,i}^{-1})^{-1}\in T\cap \Gamma_k\\
&\Longleftrightarrow \{\gamma_{k,i}\gamma\gamma_{k,i}^{-1},\gamma_{k,i}\gamma'\gamma_{k,i}^{-1}\}\in E(\mathrm{Cay}(\Gamma_k,T\cap \Gamma_k)),
\end{aligned}
$$
and so (i) follows. Now we consider (ii). Clearly, $G_1[\Gamma_{k,i}]$ is an empty graph for all $i\in [n]$. For any $\gamma_{k,i}\gamma\in \Gamma_{k,i}=\gamma_{k,i}\Gamma_i$ and $\gamma_{k,j}\gamma'\in \Gamma_{k,j}=\gamma_{k,j}\Gamma_j$ ($i\neq j$), we have
$\{\gamma_{k,i}\gamma,\gamma_{k,j}\gamma'\}\in E(G_1)$ if and only if $\gamma_{k,j}\gamma'(\gamma_{k,i}\gamma)^{-1}\in T$, which is the case if and only if $\gamma_{k,j}\gamma'(\gamma_{k,i}\gamma)^{-1}\in T\setminus (T\cap \Gamma_k)$ because $\gamma_{k,j}\gamma'(\gamma_{k,i}\gamma)^{-1}=\gamma_{k,j}\gamma'\gamma^{-1}\gamma_{k,i}^{-1}\not\in\Gamma_k$ due to $i\neq j$. Therefore, each edge of $G_1$ comes from $T\setminus (T\cap \Gamma_k)$. Conversely, $T\setminus (T\cap \Gamma_k)$ can only be used to produce the edges in $E(G_1)=E_2$ because each edge in $E_1=\cup_{i=1}^n E(G[\Gamma_{k,i}])$ comes from $T\cap T_k$. This proves (ii).
\end{proof}

Now we are in a position to give the main result of this section, which provides upper bounds for some special eigenvalues of Cayley graphs.

\begin{thm}\label{Cay-thm}
Let $\Gamma$ be a finite group acting transitively on $[n]=\{1,2,\ldots,n\}$, and let $G=\mathrm{Cay}(\Gamma,T)$ be a  Cayley graph of $\Gamma$. Then the left coset decomposition $\Pi_i$ of $\Gamma$ given in  (\ref{eq-4}) leads to an equitable partition of $G$, and the corresponding quotient matrix $B_{\Pi}=B_{\Pi_i}$ is symmetric and independent on the choice of $i$. Moreover, if $\lambda$ is an eigenvalue of $G$ other than that of $B_\Pi$, then, for each $k\in [n]$, we have
$$
\lambda\leq \lambda_2(\mathrm{Cay}(\Gamma_k,T\cap \Gamma_k))+\lambda_2(\mathrm{Cay}(\Gamma,T\setminus (T\cap \Gamma_k))),
$$
where $\Gamma_k$ is the stabilizer subgroup of $\Gamma$ with respect to $k$.
\end{thm}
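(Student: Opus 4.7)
The plan is to reduce the main inequality to a direct application of Theorem \ref{reg-thm}, using the (generally non-equitable) partition $\Pi'_k$ from (\ref{eq-7}). The first two assertions of the theorem---that $\Pi_i$ is equitable and that $B_\Pi=B_{\Pi_i}$ is symmetric and independent of $i$---are already established in the paragraphs preceding the statement, via Lemma \ref{Cay-equitable} and the edge-counting computation (\ref{eq-6}) together with the double-counting argument that shows $b_{st}=b_{ts}$; I would just record these briefly at the outset.

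Next I would assemble the structural ingredients for Theorem \ref{reg-thm} applied to the partition $\Pi'_k$. The graph $G$ is $|T|$-regular. By Lemma \ref{structure-lem}(i), every induced subgraph $G[\Gamma_{k,i}]$ is isomorphic to $\mathrm{Cay}(\Gamma_k,T\cap \Gamma_k)$, so they are simultaneously $|T\cap \Gamma_k|$-regular, which supplies the common inner degree $r_1$ required by Theorem \ref{reg-thm}. By Lemma \ref{structure-lem}(ii), the graph $G_1$ obtained from $G$ by deleting all edges inside the blocks of $\Pi'_k$ is exactly $\mathrm{Cay}(\Gamma,T\setminus(T\cap \Gamma_k))$. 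Since each row of $B_\Pi$ sums to $|T|$, the value $|T|$ is an eigenvalue of $B_\Pi$, so the hypothesis $\lambda\neq |T|$ of Theorem \ref{reg-thm} is automatic.

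The crucial step---and the one point that requires a small but essential observation---is to show that any eigenvector $f$ of $G$ for $\lambda$ sums to zero on each block $\Gamma_{k,j}$ of $\Pi'_k$, even though $\Pi'_k$ itself need not be equitable. The trick is that the block $\Gamma_{k,j}$ coincides with the block indexed by first coordinate $k$ of the \emph{other} partition $\Pi_j$ appearing in (\ref{eq-4}). Since every $\Pi_j$ is equitable with the common quotient matrix $B_\Pi$, and $\lambda$ is not an eigenvalue of $B_\Pi$, Lemma \ref{quotient}(ii) applied to $\Pi_j$ forces $f$ to sum to zero on each of its blocks, and in particular on $\Gamma_{k,j}$. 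Letting $j$ range over $[n]$ yields the desired vanishing on every block of $\Pi'_k$.

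With all the pieces in hand, Theorem \ref{reg-thm} applied to $G$, the partition $\Pi'_k$ and the eigenvector $f$ gives
$$\lambda\leq \max_{1\leq i\leq n}\lambda_2(G[\Gamma_{k,i}])+\lambda_2(G_1)=\lambda_2(\mathrm{Cay}(\Gamma_k,T\cap \Gamma_k))+\lambda_2(\mathrm{Cay}(\Gamma,T\setminus(T\cap \Gamma_k))),$$
where the equality uses that the subgraphs $G[\Gamma_{k,i}]$ are pairwise isomorphic. The main conceptual hurdle is spotting the crossover between the non-equitable partition $\Pi'_k$, which provides the clean structural description via Lemma \ref{structure-lem}, and the family $\{\Pi_j\}_{j\in[n]}$ of equitable partitions sharing the common quotient $B_\Pi$, which delivers the zero-sum property of the eigenvector on each block of $\Pi'_k$. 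Once that observation is pinpointed, the remainder of the argument is bookkeeping.
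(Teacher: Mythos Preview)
Your proposal is correct and follows essentially the same approach as the paper: both reduce to Theorem~\ref{reg-thm} applied to the partition $\Pi'_k$, use Lemma~\ref{structure-lem} to identify the blocks and the complement $G_1$, and obtain the zero-sum property of the eigenvector on each $\Gamma_{k,j}$ by observing that $\Gamma_{k,j}$ is a block of the equitable partition $\Pi_j$ (so Lemma~\ref{quotient}(ii) applies). Your write-up makes the ``crossover'' between $\Pi'_k$ and the family $\{\Pi_j\}$ more explicit than the paper's terse ``$f$ must sum to zero on $\Gamma_{j,i}$ for all $i,j$,'' but the argument is the same.
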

\begin{proof}
From the above arguments, it suffices to prove the second part of the theorem. Let $f$ be an arbitrary eigenvector of $G$ with respect to  $\lambda$. Since $\Pi_i$ is an equitable partition of $G$ for each $i$, we see that $f$ must sum to zero on $\Gamma_{j,i}$ for all $i,j\in [n]$ by Lemma \ref{quotient}. For any fixed $k\in [n]$, let $\Pi'_k$ be the vertex partition of $G$ given in (\ref{eq-7}). In particular, we have that $f$ sums to zero on $\Gamma_{k,i}$ for all $i\in [n]$.  By Lemma \ref{structure-lem}, all these induced subgraphs $G[\Gamma_{k,i}]$ ($i\in[n]$) are isomorphic to $\mathrm{Cay}(\Gamma_k,T\cap \Gamma_k)$, and so share the same degree $|T\cap \Gamma_k|$. Let $G_1$ be the graph obtained from  $G$  by removing all edges in $\cup_{i=1}^n E(G[\Gamma_{k,i}])$. Note that $G_1\cong \mathrm{Cay}(\Gamma,T\setminus (T\cap \Gamma_k))$ again by Lemma \ref{structure-lem}. Then, by applying Theorem \ref{reg-thm} to the vertex partition $\Pi_k'$, we obtain
\begin{equation*}
\begin{aligned}
\lambda&\leq \max_{1\leq i\leq n}\lambda_2(G[\Gamma_{k,i}])+\lambda_2(G_1)\\
&=\lambda_2(\mathrm{Cay}(\Gamma_k,T\cap \Gamma_k))+\lambda_2(\mathrm{Cay}(\Gamma,T\setminus (T\cap \Gamma_k))).
\end{aligned}
\end{equation*}
By the arbitrariness of $k\in [n]$, our result follows.
\end{proof}

It is worth mentioning that Theorem \ref{Cay-thm} provides for us a recursive method to determine the second eigenvalue of  the connected Cayley graph $G=\mathrm{Cay}(\Gamma,T)$. Indeed, by Lemma \ref{quotient},  all eigenvalues of $B_\Pi$ are also that of $G$,  so we have
$\lambda_2(G)\geq \lambda_2(B_\Pi)$. Therefore, if there exists some $k\in [n]$ such that
\begin{equation}\label{eq-8}
\lambda_2(\mathrm{Cay}(\Gamma_k,T\cap \Gamma_k))+\lambda_2(\mathrm{Cay}(\Gamma,T\setminus (T\cap \Gamma_k)))\leq \lambda_2(B_\Pi),
\end{equation}
then we may conclude  that $\lambda_2(G)=\lambda_2(B_\Pi)$ by Theorem \ref{Cay-thm}. Thus the problem is reduced to determining the exact value of $\lambda_2(\mathrm{Cay}(\Gamma_k,T\cap \Gamma_k))$ and $\lambda_2(\mathrm{Cay}(\Gamma,T\setminus (T\cap \Gamma_k)))$, which reminds us that the way of induction could be applied.

In the next section, we shall see that if $\Gamma$ and $T$ satisfy some additional conditions  then the problem of proving $\lambda_2(G)= \lambda_2(B_\Pi)$ can be reduced to that of verifying the result for some small graphs.

\section{Normal Cayley graphs}\label{s-3}

For a finite group $\Gamma$,  the \emph{conjugacy class} of $\gamma\in\Gamma$ is defined as the set  $\mathcal{C}_\gamma=\{\sigma^{-1}\gamma\sigma\mid \sigma\in\Gamma\}$. Recall that a Cayley graph $\mathrm{Cay}(\Gamma,T)$ is said to be normal if $T$ is closed under conjugation, that is, $T$ is the disjoint union of some conjugacy classes of $\Gamma$. It is well known that the eigenvalues of a normal Cayley graph can be expressed in terms of the irreducible characters of $\Gamma$.
\begin{thm}[\cite{Babai,Lubotzky,Murty}]\label{normal-thm-0}
The eigenvalues of a normal Cayley graph $\mathrm{Cay}(\Gamma,T)$ are given by
$$
\lambda_\chi=\frac{1}{\chi(1)}\sum_{\tau\in T}\chi(\tau),
$$
where $\chi$ ranges over all the irreducible characters of $\Gamma$. Moreover, the multiplicity of $\lambda_\chi$
is $\chi(1)^2$.
\end{thm}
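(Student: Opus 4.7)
The plan is to realize the adjacency matrix of $\mathrm{Cay}(\Gamma,T)$ as the action, by right multiplication, of the group algebra element $\widehat{T}=\sum_{\tau\in T}\tau$ on $\mathbb{C}[\Gamma]$ (with the basis of $\mathbb{C}[\Gamma]$ given by the elements of $\Gamma$, the matrix of right multiplication by $\widehat{T}$ in that basis is precisely the adjacency matrix, since $\{\gamma,\tau\gamma\}$ is an edge iff $\tau\in T$). The key observation is that because $T$ is a union of conjugacy classes of $\Gamma$, the element $\widehat{T}$ lies in the center $Z(\mathbb{C}[\Gamma])$.

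Next, I would invoke the decomposition of the regular representation of $\Gamma$ into irreducibles:
\begin{equation*}
\mathbb{C}[\Gamma]\;\cong\;\bigoplus_{\chi\in\mathrm{Irr}(\Gamma)} V_\chi^{\oplus\chi(1)},
\end{equation*}
where $\chi$ runs over the irreducible characters of $\Gamma$ and $V_\chi$ is the corresponding irreducible module of dimension $\chi(1)$. Since $\widehat{T}$ is central, Schur's lemma forces $\widehat{T}$ to act on each irreducible component $V_\chi$ as a scalar $\lambda_\chi\cdot\mathrm{Id}_{V_\chi}$. To identify this scalar, I would take the trace on $V_\chi$:
\begin{equation*}
\chi(1)\,\lambda_\chi \;=\; \mathrm{tr}\bigl(\rho_\chi(\widehat{T})\bigr) \;=\; \sum_{\tau\in T}\chi(\tau),
\end{equation*}
which yields the formula $\lambda_\chi=\frac{1}{\chi(1)}\sum_{\tau\in T}\chi(\tau)$.

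For the multiplicity statement, each irreducible $V_\chi$ appears $\chi(1)$ times in $\mathbb{C}[\Gamma]$, and on each copy contributes an eigenspace of $\widehat{T}$ of dimension $\chi(1)$ (all of $V_\chi$), so the total multiplicity of $\lambda_\chi$ contributed by $\chi$ is $\chi(1)\cdot\chi(1)=\chi(1)^2$. Summing over all irreducibles gives $\sum_\chi\chi(1)^2=|\Gamma|$, matching the order of the adjacency matrix, so this accounts for every eigenvalue. One subtlety worth noting, which I would mention explicitly, is that distinct characters $\chi\neq\chi'$ may produce the same numerical value $\lambda_\chi=\lambda_{\chi'}$; in that case the multiplicity of the eigenvalue is the sum of the corresponding $\chi(1)^2$'s.

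The main (minor) obstacle is not a computational one but a clarity one: carefully distinguishing the algebraic eigenvalue $\lambda_\chi$ (indexed by the character) from the numerical eigenvalues of $A(G)$ (with their geometric multiplicities), and verifying that the identification of $A(G)$ with right multiplication by $\widehat{T}$ is correct with respect to the convention $E(G)=\{\{\gamma,\tau\gamma\}:\gamma\in\Gamma,\tau\in T\}$ used in the paper. Once these bookkeeping points are settled, the result follows directly from the representation-theoretic setup; in fact, since this is a standard fact, I would likely just cite \cite{Babai,Lubotzky,Murty} and present only the brief sketch above.
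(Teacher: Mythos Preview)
Your sketch is correct and is exactly the standard representation-theoretic argument for this result. Note, however, that the paper does not supply its own proof of this theorem: it is stated as a known fact with citations to \cite{Babai,Lubotzky,Murty}, so there is nothing to compare against beyond observing that your closing remark---to simply cite those references---matches what the authors actually do.
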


However, it is often difficult  to identify the second eigenvalues of   normal Cayley graphs from Theorem \ref{normal-thm-0}.
In this section, by using Theorem \ref{Cay-thm}, we  reduce the problem of determining  the second eigenvalues  of normal Cayley graphs of high transitive groups  to that of verifying the result for some smaller graphs.

From now on, we always assume that $\Gamma$ acts transitively on $[n]$, and that  $G=\mathrm{Cay}(\Gamma,T)$ is a connected normal Cayley graph of $\Gamma$, i.e., $T$ is a generating subset of $\Gamma$ which is also closed under conjugation. In order to use Theorem \ref{Cay-thm} recursively, we set $T_0=T$, $G_0=\mathrm{Cay}(\Gamma,T_0)=G$, and for $k=1,2,\ldots,n$, we define
\begin{equation}\label{eq-13}
\begin{aligned}
&G_k=\mathrm{Cay}(\Gamma,T_k)~\mbox{with}~T_{k}=T_{k-1}\setminus (T_{k-1}\cap \Gamma_{k});\\
&H_k=\mathrm{Cay}(\Gamma_k,R_k)~\mbox{with}~R_{k}=T_{k-1}\cap \Gamma_{k}.
\end{aligned}
\end{equation}
We see that both  $G_k$ and $H_k$ are subgraphs of $G_{k-1}$, and furthermore,  by regarding $T_{k-1}$ as $T$ in Lemma  \ref{structure-lem}, we have
\begin{claim}\label{claim-1}
The edge set of $G_{k-1}$ ($k\geq 1$) can be decomposed into that of $G_k$ and $n$ copies of $H_k$.
\end{claim}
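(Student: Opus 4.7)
The plan is to prove the claim by a direct application of Lemma \ref{structure-lem} to the graph $G_{k-1}$, viewed as a Cayley graph in its own right. First I would observe that the construction in (\ref{eq-13}) makes sense at every step: since $T$ is symmetric and closed under conjugation, and $\Gamma_k$ is a subgroup of $\Gamma$ (hence closed under inversion), the set $R_k = T_{k-1}\cap \Gamma_k$ is symmetric, and so $T_k = T_{k-1}\setminus R_k$ is also symmetric. Thus by induction on $k$, each $T_{k-1}$ is a legitimate connection set and $G_{k-1}=\mathrm{Cay}(\Gamma,T_{k-1})$ is genuinely a Cayley graph of the transitive group $\Gamma$ on $[n]$.

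Next, I would apply Lemma \ref{structure-lem} with the role of $T$ played by $T_{k-1}$ and with the fixed index being $k$. Consider the right coset decomposition $\Pi'_k$ from (\ref{eq-7}), which splits $V(G_{k-1})=\Gamma$ into the $n$ blocks $\Gamma_{k,1},\ldots,\Gamma_{k,n}$. Every edge of $G_{k-1}$ is either internal to some block $\Gamma_{k,i}$ or runs between two distinct blocks, and these two possibilities are mutually exclusive; this yields the set-theoretic decomposition
\begin{equation*}
E(G_{k-1}) \;=\; \Bigl(\bigcup_{i=1}^{n} E(G_{k-1}[\Gamma_{k,i}])\Bigr) \;\sqcup\; \bigl(E(G_{k-1})\setminus E_1\bigr),
\end{equation*}
where $E_1=\cup_{i=1}^n E(G_{k-1}[\Gamma_{k,i}])$.

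To identify the two pieces, I would invoke Lemma \ref{structure-lem}(i), which gives $G_{k-1}[\Gamma_{k,i}]\cong \mathrm{Cay}(\Gamma_k, T_{k-1}\cap \Gamma_k)=\mathrm{Cay}(\Gamma_k,R_k)=H_k$ for every $i\in[n]$. This produces the $n$ disjoint copies of $H_k$. Then I would invoke Lemma \ref{structure-lem}(ii), which identifies the subgraph on the between-block edges as $\mathrm{Cay}(\Gamma, T_{k-1}\setminus (T_{k-1}\cap \Gamma_k))=\mathrm{Cay}(\Gamma,T_k)=G_k$. Putting these two identifications together completes the edge decomposition.

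There really is no hard step here; the claim is essentially a re-reading of Lemma \ref{structure-lem} in the recursive notation of (\ref{eq-13}). The only matter requiring attention is the observation at the start that the Cayley-graph structure is preserved at each stage, so that Lemma \ref{structure-lem} is applicable to $G_{k-1}$ and not merely to $G_0=G$. Once that is noted, the edge-disjoint decomposition falls out immediately from the partition of $V(G_{k-1})$ into the $n$ right cosets of $\Gamma_k$.
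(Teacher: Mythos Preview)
Your proposal is correct and matches the paper's own justification exactly: the paper simply states that Claim~\ref{claim-1} follows ``by regarding $T_{k-1}$ as $T$ in Lemma~\ref{structure-lem}'', which is precisely what you have spelled out in detail. Your added remark that each $T_{k-1}$ remains a valid (symmetric) connection set is a welcome point of rigor that the paper leaves implicit.
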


Note that $T_1=T\setminus (T\cap \Gamma_1)$ consists of those elements in $T$ moving $1$, $T_2=T_1\setminus (T_1\cap \Gamma_2)$ consists of those elements in $T_1$ moving $2$, i.e., those elements in $T$ moving both $1$ and $2$, and so on. Thus we have

\begin{claim}\label{claim-2}
For each $k\geq 1$, $T_k$ is the set of $\tau\in T$ satisfying $\{1,2,\ldots,k\}\subseteq  \mathrm{supp}(\tau)$, i.e., $T_k=T\setminus(T\cap (\cup_{i=1}^k \Gamma_{i}))$, and thus $R_k=T_{k-1}\cap \Gamma_{k}$ is the set of elements in $T$ moving   $1,2,\ldots,{k-1}$ but fixing  $k$.
\end{claim}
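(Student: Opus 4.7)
The plan is to prove Claim~\ref{claim-2} by a straightforward induction on $k$, combined with the simple observation that membership in the stabilizer $\Gamma_i$ is equivalent to fixing the point $i$. Concretely, for any $\tau\in\Gamma$ and any $i\in[n]$, we have $\tau\in\Gamma_i$ if and only if $i^\tau=i$, i.e., $i\notin\mathrm{supp}(\tau)$. This turns each ``set-theoretic'' intersection $T_{k-1}\cap\Gamma_k$ into the combinatorial condition ``fixes $k$,'' which is precisely the mechanism needed.

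For the base case $k=1$, I would apply the definition in (\ref{eq-13}): $T_1=T_0\setminus(T_0\cap\Gamma_1)=T\setminus\{\tau\in T:1^\tau=1\}=\{\tau\in T:1\in\mathrm{supp}(\tau)\}$, which matches both formulations in the claim. For the inductive step, assuming $T_{k-1}=\{\tau\in T:\{1,\dots,k-1\}\subseteq\mathrm{supp}(\tau)\}$, I would compute
\begin{equation*}
T_k=T_{k-1}\setminus(T_{k-1}\cap\Gamma_k)=\{\tau\in T_{k-1}:k^\tau\neq k\}=\{\tau\in T:\{1,\dots,k\}\subseteq\mathrm{supp}(\tau)\},
\end{equation*}
which is the desired description. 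The alternative identity $T_k=T\setminus\bigl(T\cap(\cup_{i=1}^k\Gamma_i)\bigr)$ then follows by taking complements: $\tau\in T\cap(\cup_{i=1}^k\Gamma_i)$ means $\tau\in T$ and $\tau$ fixes at least one point of $\{1,\dots,k\}$, whose negation inside $T$ is exactly $\{1,\dots,k\}\subseteq\mathrm{supp}(\tau)$.

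The statement about $R_k$ is then an immediate corollary: by (\ref{eq-13}) we have $R_k=T_{k-1}\cap\Gamma_k$, and combining the inductive description of $T_{k-1}$ with $\tau\in\Gamma_k\Leftrightarrow k^\tau=k$ gives $R_k=\{\tau\in T:\{1,\dots,k-1\}\subseteq\mathrm{supp}(\tau),\ k^\tau=k\}$, which is the claimed characterization.

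I do not anticipate any genuine obstacle here; the content of the claim is essentially unfolding the recursion (\ref{eq-13}) and translating stabilizer membership into a fixed-point condition. The only mild care needed is keeping the two equivalent descriptions of $T_k$ (via chain of set differences versus a single set difference with a union of stabilizers) in sync, which is handled cleanly by induction together with De Morgan's law for the set-complement reformulation.
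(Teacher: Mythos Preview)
Your proposal is correct and follows essentially the same approach as the paper: immediately before stating the claim, the paper gives the informal inductive argument (``$T_1$ consists of those elements in $T$ moving $1$, $T_2$ consists of those elements in $T_1$ moving $2$, \ldots, and so on''), which is exactly what you have written out carefully. There is nothing more to add.
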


Note that  $\Gamma$ acts transitively on $[n]$. For  $0\leq k\leq n$,  from  Theorem \ref{Cay-thm} and  (\ref{eq-6})  we see that the left coset decompositions $\Pi_i$ ($i\in[n]$) of $\Gamma$ given in  (\ref{eq-4})  are equitable partitions of $G_k=\mathrm{Cay}(\Gamma,T_k)$ which share the same symmetric quotient matrix
\begin{equation}\label{eq-10}
B_\Pi^{(k)}=(b_{st}^{(k)})_{n\times n},~\mbox{where $b_{st}^{(k)}=|T_k\cap \Gamma_{t,s}|$}.
\end{equation}
In particular, $B_\Pi^{(0)}=B_\Pi$.

To achieve our goal, we need to  determine the second eigenvalue of  $B_\Pi^{(k)}$ ($k\geq 0$). 

\begin{lem}\label{quotient-lem2}
Let $G_k=\mathrm{Cay}(\Gamma,T_k)$ ($k\geq 0$) be the graph defined in  (\ref{eq-13}), and $B_\Pi^{(k)}$ the quotient matrix of $G_k$ defined in (\ref{eq-10}).  If $\Gamma$ acts $(k+2)$-transitively on $[n]$, then   $\lambda_2(B_\Pi^{(k)})=|T_k\cap \Gamma_{k+1}|-|T_k\cap \Gamma_{k+2,k+1}|$.
\end{lem}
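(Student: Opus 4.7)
The plan is to use the high transitivity and normality to pin down the structure of $B_\Pi^{(k)}$ and then read off $\lambda_2$ by direct eigenvalue computation. Write $K=\{1,2,\ldots,k\}$. First I will show that the entries $b_{st}^{(k)}=|T_k\cap\Gamma_{t,s}|$ take only five distinct values, depending on the ``type'' of the pair $(s,t)$ relative to $K$: $0$ on the diagonal of $K\times K$; $\alpha:=|T_k\cap\Gamma_{k+1}|$ on the diagonal of $K^c\times K^c$; a common value $\beta$ off-diagonally on $K\times K$ (relevant only for $k\ge 2$); a common value $\gamma$ on the $K\times K^c$ and $K^c\times K$ blocks; and $\delta:=|T_k\cap\Gamma_{k+2,k+1}|$ off-diagonally on $K^c\times K^c$. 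The verification is by conjugation: for any two pairs of the same type, $(k+2)$-transitivity supplies $\sigma\in\Gamma$ which preserves $K$ setwise and maps one pair to the other, and conjugation by $\sigma$ then bijects $\{\tau\in T_k:t^\tau=s\}$ with $\{\tau\in T_k:(t')^\tau=s'\}$, using both the normality of $T$ and the fact that, when $\sigma(K)=K$, the relation ``$\tau$ moves $K$'' is preserved by $\tau\mapsto\sigma^{-1}\tau\sigma$.

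Next I will diagonalize $B_\Pi^{(k)}$ using the invariant decomposition
\begin{equation*}
\mathbb{R}^n=\mathrm{span}(\mathbf{1}_K,\mathbf{1}_{K^c})\oplus W_K\oplus W_{K^c},
\end{equation*}
where $W_K=\{v:v|_{K^c}=0,\ \sum_{i\in K}v_i=0\}$ and $W_{K^c}$ is defined symmetrically. The block structure implies $B_\Pi^{(k)}|_{W_K}=-\beta I$ and $B_\Pi^{(k)}|_{W_{K^c}}=(\alpha-\delta)I$, contributing eigenvalues $-\beta$ with multiplicity $k-1$ and $\alpha-\delta$ with multiplicity $n-k-1$. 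The two-dimensional piece contributes $|T_k|$ (constant row sums) together with a second eigenvalue which, by a short trace computation using the row-sum identity $|T_k|=k\gamma+\alpha+(n-k-1)\delta$, equals $|T_k|-n\gamma$. Thus
\begin{equation*}
\lambda_2(B_\Pi^{(k)})=\max\{-\beta,\ \alpha-\delta,\ |T_k|-n\gamma\}.
\end{equation*}

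The last step is to verify that this maximum is $\alpha-\delta$. Algebra reduces this to the two inequalities $\gamma\ge\delta$ and (for $k\ge 2$) $\beta\ge\delta$, which I will establish by constructing explicit injections via conjugation. For $\gamma\ge\delta$, choose $\sigma\in\Gamma$ with $\sigma(k+1)=1$, $\sigma(k+2)=k+1$, and $\sigma(j)=j$ for $2\le j\le k$; such $\sigma$ exists by $(k+1)$-transitivity applied to the distinct $(k+1)$-tuple $(2,\ldots,k,k+1,k+2)$. Then $\tau\mapsto\sigma^{-1}\tau\sigma$ sends any $\tau\in T_k$ with $\tau(k+2)=k+1$ to some $\tau'\in T$ with $\tau'(k+1)=1$; the critical checkpoint is $\tau'\in T_k$, which holds because the hypothesis $\tau(k+2)=k+1$ forces $\tau(k+1)\ne k+1$ (by bijectivity of $\tau$), so $\tau$ in fact moves every element of $\sigma^{-1}(K)=\{k+1\}\cup\{2,\ldots,k\}$. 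The proof of $\beta\ge\delta$ is parallel, with $\sigma(k+1)=1$, $\sigma(k+2)=2$, and $\sigma$ fixing $\{3,\ldots,k\}$ pointwise.

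The main obstacle is precisely this last verification that the conjugated element stays inside $T_k$; it is not automatic since $T_k$ itself is not a union of conjugacy classes. The clean resolution is the observation that any element of $T_k$ counted by $\delta$ necessarily moves $k+1$, which is exactly what allows the injection to avoid leaving $T_k$. The edge cases $k=0$ (where only $|T_k|$ and $\alpha-\delta$ arise) and $k=1$ (where $\beta$ is absent) are handled by the same framework with trivial omissions.
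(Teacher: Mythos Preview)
Your proof is correct and follows essentially the same approach as the paper: identify the block structure of $B_\Pi^{(k)}$ via conjugation under $(k+2)$-transitivity, compute the four eigenvalues, and reduce $\lambda_2=\alpha-\delta$ to the inequalities $\gamma\ge\delta$ and $\beta\ge\delta$. The only notable difference is in the execution of these last two inequalities: the paper rewrites each of $b_{k+1,1}^{(k)}$, $b_{1,2}^{(k)}$, and $b_{k+1,k+2}^{(k)}$ in a common form (as $|T\cap\Gamma_{k+2,k+1}|$ minus a correction) and compares the correction terms, whereas you construct direct injections $\tau\mapsto\sigma^{-1}\tau\sigma$ and check membership in $T_k$; your packaging is slightly cleaner but the underlying conjugation idea is identical.
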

\begin{proof}
First suppose $k=0$. According to  (\ref{eq-10}), we have $B_\Pi^{(0)}=(b_{st}^{(0)})_{n\times n}$, where $b_{st}^{(0)}=|T_0\cap \Gamma_{t,s}|$. Since $\Gamma$ acts $2$-transitively on $[n]$, for any $s\in[n]$, there exists some $\sigma\in\Gamma$ such that $\sigma$ maps $s$ to $1$. Considering that $T_0=T$ is closed under conjugation, we have $b_{ss}^{(0)}=|T_0\cap \Gamma_{s,s}|=|T_0\cap \Gamma_{s}|=|\sigma^{-1}(T_0\cap \Gamma_{s})\sigma|=|(\sigma^{-1}T_0\sigma)\cap (\sigma^{-1}\Gamma_{s}\sigma)|=|T_0\cap \Gamma_{1}|=b_{11}^{(0)}$.   Similarly, for any two distinct $s,t\in [n]$, there exists some $\sigma$ in $\Gamma$ mapping $s$ to $1$ and $t$ to $2$ by the $2$-transitivity of $\Gamma$ acting on $[n]$.  Then $b_{st}^{(0)}=|T_0\cap \Gamma_{t,s}|=|\sigma^{-1}(T_0\cap \Gamma_{t,s})\sigma|=|(\sigma^{-1}T_0\sigma)\cap (\sigma^{-1}\Gamma_{t,s}\sigma)|=|T_0\cap \Gamma_{t^\sigma,s^\sigma}|=|T_0\cap \Gamma_{2,1}|=b_{12}^{(0)}$. Combining these results, we have
$$B_\Pi^{(0)}=b_{11}^{(0)}\cdot I_n+b_{12}^{(0)}\cdot (J_n-I_n).$$
Thus the quotient matrix $B_\Pi^{(0)}$ has eigenvalues $|T|=b_{11}^{(0)}+(n-1)\cdot b_{12}^{(0)}$ of multiplicity one and $b_{11}^{(0)}-b_{12}^{(0)}$ of multiplicity $n-1$. Therefore,  $\lambda_2(B_\Pi^{(0)})=b_{11}^{(0)}-b_{12}^{(0)}=|T_0\cap \Gamma_1|-|T_0\cap \Gamma_{2,1}|$, and our result follows.

Now suppose $k\geq 1$. By definition, we see that $T_k=T\setminus(T\cap (\cup_{l=1}^k \Gamma_l))$. We claim that if $\sigma$ is an element in $\Gamma$ fixing $\{1,2,\ldots,k\}$ setwise then   $\sigma^{-1}T_k\sigma=T_k$. Indeed, we have $\sigma^{-1}T_k\sigma=(\sigma^{-1}T\sigma) \setminus((\sigma^{-1}T\sigma)\cap (\cup_{l=1}^k \sigma^{-1}\Gamma_l\sigma))=T\setminus(T\cap (\cup_{l=1}^k\Gamma_{l^{\sigma}}))=T\setminus(T\cap (\cup_{l=1}^k\Gamma_l))=T_k$, as required.

We shall determine all eigenvalues of $B_\Pi^{(k)}$. According to   (\ref{eq-10}), we see that $B_\Pi^{(k)}=(b_{st}^{(k)})$, where $b_{st}^{(k)}=|T_k\cap \Gamma_{t,s}|$. For $1\leq s\leq k$, we have $b_{ss}^{(k)}=|T_k\cap \Gamma_{s,s}|=0$ because $T_k$ must move $s$ but $\Gamma_{s,s}=\Gamma_s$ does not. For  $k+1\leq s\leq n$, by the $(k+2)$-transitivity   of $\Gamma$ acting on $[n]$, there is a $\sigma\in\Gamma$  fixing $\{1,2,\ldots,k\}$ setwise but moving $s$ to $k+1$. Then  $\sigma^{-1}T_k\sigma=T_k$  and $\sigma^{-1}\Gamma_s\sigma=\Gamma_{k+1}$ by above arguments,  and thus $b_{ss}^{(k)}=|T_k\cap \Gamma_{s,s}|=|T_k\cap \Gamma_{s}|=|\sigma^{-1}(T_k\cap \Gamma_{s})\sigma|=|(\sigma^{-1}T_k\sigma) \cap (\sigma^{-1}\Gamma_s\sigma)|=|T_k\cap \Gamma_{k+1}|=b_{k+1,k+1}^{(k)}$.  For $1\leq s<t\leq k$ (if $k\geq 2$), again by the   $(k+2)$-transitivity, we can choose $\sigma\in\Gamma$  such that $\sigma$ moves $t$ to $2$ and $s$ to $1$ but fixes  $\{1,2,\ldots,k\}$ setwise. Then we see that $b_{st}^{(k)}=|T_k\cap \Gamma_{t,s}|=|\sigma^{-1}(T_k\cap \Gamma_{t,s})\sigma|=|(\sigma^{-1}T_k\sigma)\cap (\sigma^{-1}\Gamma_{t,s}\sigma)|=|T_k\cap \Gamma_{2,1}|=b_{12}^{(k)}$. For $1\leq s\leq k$ and  $k+1\leq t\leq n$, there also exists  some $\sigma$ in $\Gamma$  mapping $s$ to $1$, $t$ to $k+1$ but fixing  $\{1,2,\ldots,k\}$ setwise, thus we get $b_{st}^{(k)}=|T_k\cap \Gamma_{t,s}|=|\sigma^{-1}(T_k\cap \Gamma_{t,s})\sigma|=|T_k\cap \Gamma_{k+1,1}|=b_{1,k+1}^{(k)}$. For $k+1\leq s<t\leq n$, we take $\sigma\in \Gamma$ such that $\sigma$ maps $s$ to $k+1$ and $t$ to $k+2$ but fixes  $\{1,2,\ldots,k\}$ setwise. Then $b_{st}^{(k)}=|T_k\cap \Gamma_{t,s}|=|\sigma^{-1}(T_k\cap \Gamma_{t,s})\sigma|=|T_k\cap \Gamma_{k+2,k+1}|=b_{k+1,k+2}^{(k)}$. Concluding these results, we have
\begin{equation*}
b_{st}^{(k)}=b_{ts}^{(k)}=\left\{
\begin{array}{ll}
0,& \mbox{if $1\leq s=t\leq k$;}\\
|T_k\cap \Gamma_{k+1}|=b_{k+1,k+1}^{(k)},& \mbox{if $k+1\leq s=t\leq n$;}\\
|T_k\cap \Gamma_{2,1}|=b_{1,2}^{(k)},& \mbox{if $1\leq s<t\leq k$ (for $k\geq 2$);}\\
|T_k\cap \Gamma_{k+1,1}|=b_{1,k+1}^{(k)},& \mbox{if $1\leq s\leq k$, $k+1\leq t\leq n$;}\\
|T_k\cap \Gamma_{k+2,k+1}|=b_{k+1,k+2}^{(k)},& \mbox{if $k+1\leq s<t\leq n$.}
\end{array}
\right.
\end{equation*}
Therefore, the quotient matrix $B_\Pi^{(k)}$ can be written as
$$
B_\Pi^{(k)}=\left[
\begin{matrix}
b_{1,2}^{(k)}\cdot (J_k-I_k)&b_{1,k+1}^{(k)}\cdot J_{k\times (n-k)}\\
b_{1,k+1}^{(k)}\cdot J_{(n-k)\times k}& b_{k+1,k+1}^{(k)}\cdot I_{n-k}+b_{k+1,k+2}^{(k)}\cdot (J_{n-k}-I_{n-k})\\
\end{matrix}
\right].
$$
Take $f_1=(g_1^T,0^T)^T\in \mathbb{R}^n$  and  $f_2=(0^T, g_2^T)^T\in \mathbb{R}^n$, where $g_1\in\mathbb{R}^k$ and $g_2\in\mathbb{R}^{n-k}$ are two arbitrary vectors orthogonal to the all ones vector, respectively. One can easily verify that $B_\Pi^{(k)}f_1=-b_{1,2}^{(k)}\cdot f_1$ and $B_\Pi^{(k)}f_2=(b_{k+1,k+1}^{(k)}-b_{k+1,k+2}^{(k)})\cdot f_2$, so $-b_{1,2}^{(k)}$ and $b_{k+1,k+1}^{(k)}-b_{k+1,k+2}^{(k)}$ are eigenvalues of $B_\Pi^{(k)}$ with multiplicities at least $k-1$ and $n-k-1$, respectively. Also note that $|T_k|$ is always  an eigenvalue of $B_\Pi^{(k)}$ with the all ones vector  as its eigenvector because $G_k=\mathrm{Cay}(\Gamma,T_k)$ is $|T_k|$-regular. Thus there is just one eigenvalue, denoted by $\mu$, that is not known. By computing the trace of $B_\Pi^{(k)}$ in two ways, we obtain
$$
(n-k)\cdot b_{k+1,k+1}^{(k)}=
|T_k|-(k-1)\cdot b_{1,2}^{(k)}+(n-k-1)\cdot (b_{k+1,k+1}^{(k)}-b_{k+1,k+2}^{(k)})+\mu,
$$
which gives that
\begin{eqnarray*}
\mu&=&b_{k+1,k+1}^{(k)}+(n-k-1)\cdot b_{k+1,k+2}^{(k)}-(|T_k|-(k-1)\cdot b_{1,2}^{(k)})\\
&=&b_{k+1,k+1}^{(k)}+(n-k-1)\cdot b_{k+1,k+2}^{(k)}-(n-k)\cdot b_{1,k+1}^{(k)}\\
&=&b_{k+1,k+1}^{(k)}+(n-k-1)\cdot b_{k+1,k+2}^{(k)}-(n-k)\cdot b_{k+1,1}^{(k)}.
\end{eqnarray*}
Thus the eigenvalues of $B_\Pi^{(k)}$ are $|T|$, $-b_{1,2}^{(k)}$ (with multiplicity  $k-1$), $b_{k+1,k+1}^{(k)}-b_{k+1,k+2}^{(k)}$ (with multiplicity  $n-k-1$) and $\mu=b_{k+1,k+1}^{(k)}+(n-k-1)\cdot b_{k+1,k+2}^{(k)}-(n-k)\cdot b_{k+1,1}^{(k)}$.

Now we  prove  that $\lambda_2(B_\Pi^{(k)})=b_{k+1,k+1}^{(k)}-b_{k+1,k+2}^{(k)}$. Since $\lambda_1(B_\Pi^{(k)})=|T_k|$, it remains to compare the remaining  eigenvalues. To prove $b_{k+1,k+1}^{(k)}-b_{k+1,k+2}^{(k)}\geq \mu=b_{k+1,k+1}^{(k)}+(n-k-1)\cdot b_{k+1,k+2}^{(k)}-(n-k)\cdot b_{k+1,1}^{(k)}$, it suffices to show that  $b_{k+1,1}^{(k)}\geq b_{k+1,k+2}^{(k)}$. Indeed, by the $(k+2)$-transitivity  of $\Gamma$ acting on $[n]$, there exists some $\sigma\in \Gamma$  such that
$\sigma$ moves $1$ to $k+2$ but fixes $k+1$ and $\{2,\ldots,k\}$ setwise. Then $\sigma^{-1}T_k\sigma =(\sigma^{-1}T\sigma) \setminus((\sigma^{-1}T\sigma)\cap (\cup_{l=1}^k \sigma^{-1}\Gamma_l\sigma))=T\setminus(T\cap (\cup_{l=1}^k\Gamma_{l^{\sigma}}))=T\setminus(T\cap (\Gamma_{k+2}\cup (\cup_{l=2}^k\Gamma_l)))$, and so we obtain
\begin{equation}\label{eq-14}
\begin{aligned}
b_{k+1,1}^{(k)}&=|T_k\cap \Gamma_{1,k+1}|\\
&=|\sigma^{-1}(T_k\cap \Gamma_{1,k+1})\sigma|\\
&=|(\sigma^{-1}T_k\sigma)\cap (\sigma^{-1}\Gamma_{1,k+1}\sigma)|\\
&=|(T\setminus(T\cap (\Gamma_{k+2}\cup (\cup_{l=2}^k\Gamma_l))))\cap \Gamma_{k+2,k+1}|\\
&=|T\cap \Gamma_{k+2,k+1}|-|T\cap (\Gamma_{k+2}\cup (\cup_{l=2}^k\Gamma_l))\cap \Gamma_{k+2,k+1}|\\
&=|T\cap \Gamma_{k+2,k+1}|-|T\cap (\cup_{l=2}^k\Gamma_l)\cap \Gamma_{k+2,k+1}|,
\end{aligned}
\end{equation}
where the last equality follows from  $\Gamma_{k+2}\cap \Gamma_{k+2,k+1}=\emptyset$.
Also, we see that
\begin{equation}\label{eq-15}
b_{k+1,k+2}^{(k)}=|T_k\cap \Gamma_{k+2,k+1}|=|T\cap \Gamma_{k+2,k+1}|-|T\cap (\cup_{l=1}^k\Gamma_l)\cap \Gamma_{k+2,k+1}|.
\end{equation}
Combining  (\ref{eq-14}) and  (\ref{eq-15}) yields
\begin{equation*}
b_{k+1,1}^{(k)}-b_{k+1,k+2}^{(k)}=|T\cap (\cup_{l=1}^k\Gamma_l)\cap \Gamma_{k+2,k+1}|-|T\cap (\cup_{l=2}^k\Gamma_l)\cap \Gamma_{k+2,k+1}|\geq 0,
\end{equation*}
as required.  Now let us show that $b_{k+1,k+1}^{(k)}-b_{k+1,k+2}^{(k)}\geq -b_{1,2}^{(k)}$.  Since $-b_{1,2}^{(k)}$ is not an eigenvalue of $B_\Pi^{(k)}$ when $k=1$, we can suppose $k\geq 2$.  If  we can prove $b_{1,2}^{(k)}\geq b_{k+1,k+2}^{(k)}$, then the result follows because $b_{k+1,k+1}^{(k)}\geq 0$. As above,  by taking $\sigma\in \Gamma$ such that $\sigma$ maps $1$ to $k+1$ and $2$ to $k+2$ but fixes $\{3,\ldots,k\}$ setwise, we get
\begin{equation}\label{eq-16}
\begin{aligned}
b_{1,2}^{(k)}&=|T_k\cap \Gamma_{2,1}|\\
&=|\sigma^{-1}(T_k\cap \Gamma_{2,1})\sigma|\\
&=|(\sigma^{-1}T_k\sigma) \cap \Gamma_{k+2,k+1}|\\
&=|T\cap \Gamma_{k+2,k+1}|-|T\cap (\cup_{l=3}^{k+2}\Gamma_l)\cap \Gamma_{k+2,k+1}|\\
&=|T\cap \Gamma_{k+2,k+1}|-|T\cap (\cup_{l=3}^{k}\Gamma_l)\cap \Gamma_{k+2,k+1}|.
\end{aligned}
\end{equation}
Combining  (\ref{eq-15}) and  (\ref{eq-16}), we have
\begin{equation*}
b_{1,2}^{(k)}-b_{k+1,k+2}^{(k)}=|T\cap (\cup_{l=1}^k\Gamma_l)\cap \Gamma_{k+2,k+1}|-|T\cap (\cup_{l=3}^k\Gamma_l)\cap \Gamma_{k+2,k+1}|\geq 0,
\end{equation*}
and the result follows. Hence  we conclude that
$$\lambda_2(B_\Pi^{k})=b_{k+1,k+1}^{(k)}-b_{k+1,k+2}^{(k)}=|T_k\cap \Gamma_{k+1}|-|T_k\cap \Gamma_{k+2,k+1}|.$$

The proof is complete.
\end{proof}

Set
$$m=\max_{\tau\in T} |\mathrm{supp}(\tau)|.$$
If $m<n$, then  we claim that  $G_m=\mathrm{Cay}(\Gamma,T_m)$ is disconnected. Indeed, by the definition, $T_m$ consists of those $\tau\in T$ such that $\{1,2,\ldots,m\}\subseteq \mathrm{supp}(\tau)$. Since each element of $T$ has at most $m$ supports, we have $\mathrm{supp}(\tau)=\{1,2,\ldots,m\}$ for any $\tau\in T_m$, which implies that $T_m$ cannot generate $\Gamma$ due to $m<n$.

In the following,  we suppose further that  the action of $\Gamma$ on $[n]$ is $(m+a)$-transitive with $a\geq 1$. Under this assumption, it is clear that $n\geq m+a$, and so $m<n$, implying that $G_m$ is disconnected. Denote by
\begin{equation}\label{eq-17}
\Gamma^{(0)}=\Gamma  ~\mbox{and}~   \Gamma^{(i)}=\cap_{j=1}^{i}\Gamma_{n-j+1} ~\mbox{for $1\leq i\leq a-1$}.
\end{equation}
Indeed, $\Gamma^{(i)}$ ($1\leq i\leq a-1$) is just the subgroup of $\Gamma$ that fixes each point of $\{n-i+1,\ldots,n\}$. For this reason,  we can also regard $\Gamma^{(i)}$ as a group acting on $[n-i]=\{1,2,\ldots,n-i\}$. Moreover,  this action is $(m+a-i)$-transitive because $\Gamma$ acts $(m+a)$-transitively on $[n]$. For $0\leq i\leq a-1$, we define
\begin{equation}\label{eq-18}
\begin{aligned}
&G_{k,i}=\mathrm{Cay}(\Gamma^{(i)},T_k\cap \Gamma^{(i)})~\mbox{for}~0\leq k\leq m;\\
&H_{k,i}=\mathrm{Cay}(\Gamma^{(i)}\cap \Gamma_{k}, R_k\cap \Gamma^{(i)})~\mbox{for}~1\leq k\leq m,
\end{aligned}
\end{equation}
where $\Gamma^{(i)}$ is defined in  (\ref{eq-17}), and $T_k,R_k$ are given in  (\ref{eq-13}). By definition, $G_{k,0}=G_k=\mathrm{Cay}(\Gamma,T_k)$, $H_{k,0}=H_k=\mathrm{Cay}(\Gamma_k,R_k)$, and $G_{k,i}$ is the subgraph of both $G_{k-1,i}$ and $G_{k,i-1}$. As in   Claim \ref{claim-1}, the edge set of $G_{k-1,i}$  can  be decomposed into that of $G_{k,i}$ and $(n-i)$-copies of  $H_{k,i}$. Also,  for each fixed $i$, we see that  $T_0\cap \Gamma^{(i)}=T\cap \Gamma^{(i)}$ is  closed under conjugation in $\Gamma^{(i)}$, and $T_{k}\cap \Gamma^{(i)}$ is just the set of elements in $T\cap  \Gamma^{(i)}$ moving each point of $\{1,2,\ldots,k\}$ (similar as Claim \ref{claim-2}). Furthermore, since $n-i\geq m+a-i\geq m+1$, we claim that $T_m\subseteq\Gamma^{(i)}$ and that $G_{m,i}=\mathrm{Cay}(\Gamma^{(i)},T_m\cap \Gamma^{(i)})=\mathrm{Cay}(\Gamma^{(i)},T_m)$ is disconnected. In particular, we have
$\lambda_2(G_{m,i})=|T_m\cap \Gamma^{(i)}|=|T_m|$ for all $0\leq i\leq a-1$. Recall that $\Gamma^{(i)}$  acts  $(m+a-i)$-transitively ($m+a-i\geq m+1$) on $[n-i]$. According to  Lemma \ref{Cay-equitable} and the arguments in Section \ref{s-2}, every left coset decomposition of $\Gamma^{(i)}$ with respect to some stabilizer subgroup leads to an equitable partition of $G_{k,i}$, and all these equitable partitions share the same quotient matrix
$$B_\Pi^{(k,i)}=(b_{st}^{(k,i)})_{(n-i)\times (n-i)},~\mbox{where $b_{st}^{(k,i)}=|T_{k}\cap \Gamma^{(i)}\cap \Gamma_{t,s}|$}.$$
Clearly, $B_\Pi^{(k,0)}$ coincides with $B_\Pi^{(k)}$. For $0\leq k\leq m-1$, we have $k+2\leq m+1\leq m+a-i$, and so $\Gamma^{(i)}$  acts  $(k+2)$-transitively  on $[n-i]$. By applying Lemma  \ref{quotient-lem2} to $G_{k,i}$, we obtain
\begin{equation}\label{eq-19}
\lambda_2(B_\Pi^{(k,i)})=|T_{k}\cap \Gamma^{(i)}\cap \Gamma_{k+1}|-|T_{k}\cap \Gamma^{(i)}\cap \Gamma_{k+2,k+1}|,
\end{equation}
where $0\leq k\leq m-1$ and $0\leq i\leq a-1$.

Before giving the main result of this section, we need the following two lemmas.

\begin{lem}\label{recursive-lem}
Let $m$, $a$ and $B_\Pi^{(k,i)}$ be defined as above.
Assume that $a\geq 2$.  For $0\leq i\leq a-2$, we have
$$
\lambda_2(B_\Pi^{(k,i)})-\lambda_2(B_\Pi^{(k,i+1)})=
\left\{
\begin{array}{ll}
\lambda_2(B_\Pi^{(k+1,i)}),& \mbox{if $0\leq k\leq m-2$;}\\
|T_m|,& \mbox{if $k=m-1$.}\\
\end{array}
\right.
$$
\end{lem}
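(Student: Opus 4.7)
The plan is to unpack $\lambda_2(B_\Pi^{(k,i)})$ via the explicit formula in (\ref{eq-19}) and then match the two differences in $\lambda_2(B_\Pi^{(k,i)}) - \lambda_2(B_\Pi^{(k,i+1)})$ against the two pieces of $\lambda_2(B_\Pi^{(k+1,i)})$ (or, in the boundary case, against $|T_m|$) by means of conjugation bijections supplied by the $(m+a)$-transitivity of $\Gamma$ on $[n]$ and the closure of $T$ under conjugation.

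First I would set $A_{k,i} = |T_k \cap \Gamma^{(i)} \cap \Gamma_{k+1}|$ and $B_{k,i} = |T_k \cap \Gamma^{(i)} \cap \Gamma_{k+2,k+1}|$, so $\lambda_2(B_\Pi^{(k,i)}) = A_{k,i} - B_{k,i}$ by (\ref{eq-19}). Since $\Gamma^{(i+1)} = \Gamma^{(i)} \cap \Gamma_{n-i}$, splitting each of these sets according to whether $\tau$ fixes or moves $n-i$ gives
$$
A_{k,i} - A_{k,i+1} = |\{\tau\in T : \tau\text{ moves }1,\dots,k,n-i\text{ and fixes }k+1,n-i+1,\dots,n\}|,
$$
$$
B_{k,i} - B_{k,i+1} = |\{\tau\in T : \tau\text{ moves }1,\dots,k,n-i,\ \tau\text{ fixes }n-i+1,\dots,n,\ \tau(k+2)=k+1\}|.
$$
The lemma amounts to evaluating the difference of these two counts.

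For $0 \le k \le m-2$, I would compare to the analogous descriptions of $A_{k+1,i}$ (with moved set $\{1,\dots,k+1\}$ and fixed set $\{k+2,n-i+1,\dots,n\}$) and $B_{k+1,i}$ (same moved and fixed sets plus the send $\tau(k+3)=k+2$). Each of the four counts is determined by the behaviour of $\tau$ on at most $k+i+3 \le m+a-1$ pairwise distinct points of $[n]$. Hence $(m+a)$-transitivity supplies a $\sigma \in \Gamma$ mapping each pattern to the corresponding one; for the second identity an explicit choice is the $\sigma$ fixing $\{1,\dots,k\}\cup\{n-i+1,\dots,n\}$ pointwise with $\sigma(n-i)=k+1$, $\sigma(k+1)=k+2$, $\sigma(k+2)=k+3$. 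Since $T$ is closed under conjugation, $\tau\mapsto\sigma\tau\sigma^{-1}$ then sets up bijections between the relevant subsets and yields $A_{k,i}-A_{k,i+1}=A_{k+1,i}$ and $B_{k,i}-B_{k,i+1}=B_{k+1,i}$, proving the first case.

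For the boundary case $k=m-1$, the second difference $B_{m-1,i}-B_{m-1,i+1}$ must vanish: any $\tau$ counted there would move the $m+1$ distinct points $\{1,\dots,m-1,n-i,m+1\}$ (using $n-i \ge m+2$), contradicting $|\mathrm{supp}(\tau)| \le m$. The first difference $A_{m-1,i}-A_{m-1,i+1}$ counts $\tau\in T$ moving $\{1,\dots,m-1,n-i\}$ and fixing $\{m,n-i+1,\dots,n\}$; since $|\mathrm{supp}(\tau)|\le m$ then forces $\mathrm{supp}(\tau) = \{1,\dots,m-1,n-i\}$ exactly, conjugation by a $\sigma\in\Gamma$ swapping $m$ and $n-i$ and fixing the other relevant points (which exists by $(m+a)$-transitivity) gives a bijection with $\{\tau\in T:\mathrm{supp}(\tau) = \{1,\dots,m\}\} = T_m$, producing the stated value $|T_m|$. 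The main obstacle in all of this is the bookkeeping: for every count in play one has to verify that the prescribed points are pairwise distinct and that their total number stays within $m+a$ so that $(m+a)$-transitivity genuinely delivers the required $\sigma$. These distinctness and size checks are guaranteed by the assumptions $k\le m-2$, $i\le a-2$, and $n\ge m+a$, which together force $n-i\ge m+2$ and $k+i+3\le m+a-1$.
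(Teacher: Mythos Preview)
Your proposal is correct and follows essentially the same approach as the paper's proof: both arguments expand $\lambda_2(B_\Pi^{(k,i)})$ via formula~(\ref{eq-19}) and establish the required identities by conjugation bijections coming from the $(m+a)$-transitivity of $\Gamma$ together with the conjugation-closure of $T$. The only cosmetic difference is the order of operations---you subtract first (obtaining the ``moves $n-i$'' condition) and then conjugate, whereas the paper conjugates each of the four terms separately (via the elements $\sigma_1,\sigma_2$) and then subtracts, recognizing the result as $|T_{k+1}\cap\Gamma^{(i)}\cap\Gamma_{k+2}|-|T_{k+1}\cap\Gamma^{(i)}\cap\Gamma_{k+3,k+2}|$; the boundary case $k=m-1$ is then handled uniformly from this expression rather than by a separate direct argument.
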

\begin{proof}
Since $\Gamma$ acts $(m+a)$-transitively on $[n]$, there exists some $\sigma_1,\sigma_2\in \Gamma$ such that $\sigma_1$ moves $k+1$ to $k+2$, $n-i$ to $k+1$, $\sigma_2$ moves $k+1$ to $k+2$, $k+2$ to $k+3$ and $n-i$ to $k+1$, and both of them fix $\{1,\ldots,k\}$ and $\{n-i+1,\ldots,n\}$ setwise. Then we have $\sigma_{j}^{-1}T_k\sigma_{j}=T_k$, $\sigma_{j}^{-1}\Gamma^{(i)}\sigma_{j}=\Gamma^{(i)}$ and $\sigma_j^{-1}\Gamma^{(i+1)}\sigma_j=\sigma_j^{-1}(\Gamma_{n-i}\cap \Gamma^{(i)})\sigma_j=\Gamma_{k+1}\cap \Gamma^{(i)}$ for $j=1,2$, which gives that
\begin{equation}\label{eq-20}
\left\{
\begin{aligned}
&\sigma_1^{-1}(T_k\cap \Gamma^{(i)}\cap \Gamma_{k+1})\sigma_1=T_k\cap \Gamma^{(i)}\cap \Gamma_{k+2};\\
&\sigma_1^{-1}(T_k\cap \Gamma^{(i+1)}\cap \Gamma_{k+1})\sigma_1=T_k\cap \Gamma_{k+1}\cap \Gamma^{(i)}\cap \Gamma_{k+2};\\
&\sigma_2^{-1}(T_k\cap \Gamma^{(i)}\cap \Gamma_{k+2,k+1})\sigma_2=T_k\cap \Gamma^{(i)}\cap \Gamma_{k+3,k+2};\\
&\sigma_2^{-1}(T_k\cap \Gamma^{(i+1)}\cap \Gamma_{k+2,k+1})\sigma_2=T_k\cap \Gamma_{k+1}\cap \Gamma^{(i)}\cap \Gamma_{k+3,k+2}.
\end{aligned}
\right.
\end{equation}
Also recall that $T_{k+1}=T_k\setminus(T_k\cap \Gamma_{k+1})$. According to  (\ref{eq-19}) and  (\ref{eq-20}), we deduce that
\begin{eqnarray*}
\lambda_2(B_\Pi^{(k,i)})-\lambda_2(B_\Pi^{(k,i+1)})
&=&(|T_{k}\cap \Gamma^{(i)}\cap \Gamma_{k+1}|-|T_{k}\cap \Gamma^{(i)}\cap \Gamma_{k+2,k+1}|)-\\
&&(|T_{k}\cap \Gamma^{(i+1)}\cap \Gamma_{k+1}|-|T_{k}\cap \Gamma^{(i+1)}\cap \Gamma_{k+2,k+1}|)\\
&=&(|T_{k}\cap \Gamma^{(i)}\cap \Gamma_{k+1}|-|T_{k}\cap \Gamma^{(i+1)}\cap \Gamma_{k+1}|)-\\
&&(|T_{k}\cap \Gamma^{(i)}\cap \Gamma_{k+2,k+1}|-|T_{k}\cap \Gamma^{(i+1)}\cap \Gamma_{k+2,k+1}|)\\
&=&(|T_k\cap \Gamma^{(i)}\cap \Gamma_{k+2}|-|T_k\cap \Gamma_{k+1}\cap \Gamma^{(i)}\cap \Gamma_{k+2}|)-\\
&&(|T_k\cap \Gamma^{(i)}\cap \Gamma_{k+3,k+2}|-|T_k\cap \Gamma_{k+1}\cap \Gamma^{(i)}\cap \Gamma_{k+3,k+2}|)\\
&=&|T_{k+1}\cap \Gamma^{(i)}\cap \Gamma_{k+2}|-|T_{k+1}\cap \Gamma^{(i)}\cap \Gamma_{k+3,k+2}|.
\end{eqnarray*}
Therefore, if $0\leq k\leq m-2$, we have $\lambda_2(B_\Pi^{(k,i)})-\lambda_2(B_\Pi^{(k,i+1)})=\lambda_2(B_\Pi^{(k+1,i)})$ again by  (\ref{eq-19}); if $k=m-1$, we have $\lambda_2(B_\Pi^{(m-1,i)})-\lambda_2(B_\Pi^{(m-1,i+1)})=|T_{m}\cap \Gamma^{(i)}\cap \Gamma_{m+1}|-|T_{m}\cap \Gamma^{(i)}\cap \Gamma_{m+2,m+1}|=|T_m|-0=|T_m|$ because $\mathrm{supp}(\tau)=\{1,2,\ldots,m\}$ for any $\tau\in T_m\cap \Gamma^{(i)}=T_m$.
\end{proof}

\begin{lem}\label{cong-lem}
Let $m$, $a$, $G_{k,i}$ and $H_{k,i}$ be defined as above. Assume that $a\geq 2$. For $0\leq i\leq a-2$ and $0\leq k\leq m-1$, we have $H_{k+1,i}\cong G_{k,i+1}$.
\end{lem}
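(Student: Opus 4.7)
The plan is to produce a single element $\sigma\in\Gamma$ such that conjugation by $\sigma$ sends the ambient group of $H_{k+1,i}$ onto that of $G_{k,i+1}$ and simultaneously sends the connection set of the former onto the connection set of the latter; the map $x\mapsto\sigma^{-1}x\sigma$ will then automatically be a graph isomorphism $H_{k+1,i}\to G_{k,i+1}$. This is the same style of argument already used to establish the structural identities $\sigma_j^{-1}T_k\sigma_j=T_k$, $\sigma_j^{-1}\Gamma^{(i)}\sigma_j=\Gamma^{(i)}$ in the proof of Lemma \ref{recursive-lem}.

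First, I would read off the conditions that $\sigma$ must satisfy. Using the identity $\sigma^{-1}\Gamma_l\sigma=\Gamma_{l^{\sigma}}$ and the fact that $T$ is closed under conjugation, together with $\Gamma^{(i)}=\bigcap_{j=1}^{i}\Gamma_{n-j+1}$ and the description $T_k=T\setminus(T\cap(\cup_{l=1}^k\Gamma_l))$ from Claim \ref{claim-2}, one finds that
$$
\sigma^{-1}\Gamma^{(i)}\sigma=\Gamma^{(i)},\qquad \sigma^{-1}T_k\sigma=T_k,\qquad \sigma^{-1}\Gamma_{k+1}\sigma=\Gamma_{n-i}
$$
provided $\sigma$ stabilizes $\{n-i+1,\ldots,n\}$ setwise, stabilizes $\{1,\ldots,k\}$ setwise, and satisfies $(k+1)^{\sigma}=n-i$.

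Second, I would verify that such a $\sigma$ actually exists. The three prescribed sets $\{1,\ldots,k\}$, $\{k+1\}$, $\{n-i\}$, $\{n-i+1,\ldots,n\}$ are pairwise disjoint, since $k\le m-1$, $i\le a-2$, and $n\ge m+a$ together force $k+1\le m<m+2\le n-i$. Hence the two tuples $(1,\ldots,k,\,k+1,\,n-i+1,\ldots,n)$ and $(1,\ldots,k,\,n-i,\,n-i+1,\ldots,n)$ each consist of $k+i+1$ distinct points of $[n]$, and since $k+i+1\le m+a-1<m+a$, the $(m+a)$-transitivity of $\Gamma$ yields a $\sigma\in\Gamma$ mapping the first to the second.

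Given such $\sigma$, the three conjugation identities above combine to give $\sigma^{-1}(\Gamma^{(i)}\cap\Gamma_{k+1})\sigma=\Gamma^{(i+1)}$ and, because $R_{k+1}=T_k\cap\Gamma_{k+1}$,
$$
\sigma^{-1}(R_{k+1}\cap\Gamma^{(i)})\sigma=T_k\cap\Gamma_{n-i}\cap\Gamma^{(i)}=T_k\cap\Gamma^{(i+1)}.
$$
So $x\mapsto\sigma^{-1}x\sigma$ is the required isomorphism of Cayley graphs. I do not anticipate a serious obstacle: this is essentially a bookkeeping exercise in conjugation, with the only subtlety being to confirm the disjointness of the prescribed point sets so that the high-transitivity hypothesis furnishes~$\sigma$.
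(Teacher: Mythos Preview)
Your proposal is correct and follows essentially the same approach as the paper: both choose $\sigma\in\Gamma$ fixing $\{1,\ldots,k\}$ and $\{n-i+1,\ldots,n\}$ setwise with $(k+1)^{\sigma}=n-i$, and then verify that conjugation by $\sigma$ carries the pair $(\Gamma^{(i)}\cap\Gamma_{k+1},\,T_k\cap\Gamma_{k+1}\cap\Gamma^{(i)})$ onto $(\Gamma^{(i+1)},\,T_k\cap\Gamma^{(i+1)})$. Your version is slightly more explicit in checking the disjointness needed to invoke $(m+a)$-transitivity, which is a welcome addition.
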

\begin{proof}
According to  (\ref{eq-18}), we see that
$$H_{k+1,i}=\mathrm{Cay}(\Gamma^{(i)}\cap \Gamma_{k+1}, R_{k+1}\cap \Gamma^{(i)})=\mathrm{Cay}(\Gamma^{(i)}\cap \Gamma_{k+1}, T_k\cap  \Gamma_{k+1}\cap \Gamma^{(i)})$$
and
$$G_{k,i+1}=\mathrm{Cay}(\Gamma^{(i+1)}, T_k\cap \Gamma^{(i+1)}).$$
By the $(m+a)$-transitivity of $\Gamma$  acting on $[n]$, we can choose $\sigma\in \Gamma$ such that $\sigma$ moves $k+1$ to $n-i$ but fixes  $\{1,\ldots,k\}$ and $\{n-i+1,\ldots,n\}$ setwise. Then we see that  $\sigma^{-1}(\Gamma_{k+1}\cap \Gamma^{(i)})\sigma=\Gamma_{n-i}\cap\Gamma^{(i)}=\Gamma^{(i+1)}$ and  $\sigma^{-1}(T_k\cap  \Gamma_{k+1}\cap \Gamma^{(i)})\sigma=T_k\cap \Gamma_{n-i}\cap \Gamma^{(i)}=T_k\cap \Gamma^{(i+1)}$. Thus $\sigma$ induces an isomorphism from $H_{k+1,i}$ to $G_{k,i+1}$ naturally.
\end{proof}

Now we  give the main result of this section, which indicates that the problem of proving $\lambda_2(G_k)=\lambda_2(B_\Pi^{(k)})$ ($0\leq k\leq m-1$) can be reduced to verifying the result  for some small graphs.
\begin{thm}\label{normal-thm}
Let $\Gamma$ be a finite group acting on $[n]$, and let $G=\mathrm{Cay}(\Gamma,T)$ be a connected normal Cayley graph of $\Gamma$. Let $m=\max_{\tau\in T} |\mathrm{supp}(\tau)|$. If the action of $\Gamma$ on $[n]$ is $(m+a)$-transitive with $a\geq 1$ and $\lambda_2(G_{k,a-1})=\lambda_2(B_\Pi^{(k,a-1)})$ for all $k\in\{0,1,\ldots,m-1\}$, then we have
$$
\lambda_2(G_k)=\lambda_2(G_{k,0})=\lambda_2(B_\Pi^{(k,0)})=\lambda_2(B_\Pi^{(k)})=|T_{k}\cap \Gamma_{k+1}|-|T_{k}\cap \Gamma_{k+2,k+1}|,
$$
where $0\leq k\leq m-1$. In particular, $\lambda_2(G)=\lambda_2(G_0)=\lambda_2(B_\Pi^{(0)})=|T\cap \Gamma_1|-|T\cap \Gamma_{2,1}|$.
\end{thm}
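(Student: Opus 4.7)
The plan is to prove $\lambda_2(G_{k,i}) = \lambda_2(B_\Pi^{(k,i)})$ for all $0 \le k \le m-1$ and $0 \le i \le a-1$ by a double descending induction: outer on $i$ from $a-1$ down to $0$, inner on $k$ from $m-1$ down to $0$. Since Lemma \ref{quotient} gives $\lambda_2(G_{k,i}) \ge \lambda_2(B_\Pi^{(k,i)})$ for free, the task is only to produce the matching upper bound. The base case $i = a-1$ is precisely the hypothesis of the theorem, so nothing is required there.

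For the outer inductive step I would fix $i \in \{0,\ldots,a-2\}$ and assume $\lambda_2(G_{k,i+1}) = \lambda_2(B_\Pi^{(k,i+1)})$ for every $k$. Because $\Gamma^{(i)}$ acts $(m+a-i)$-transitively on $[n-i]$, Theorem \ref{Cay-thm} can be applied to the Cayley graph $G_{k,i} = \mathrm{Cay}(\Gamma^{(i)}, T_k \cap \Gamma^{(i)})$ with stabilizer taken at the point $k+1 \in [n-i]$. Using the definitions in (\ref{eq-13}) and (\ref{eq-18}), the two graphs appearing in that bound are exactly $H_{k+1,i}$ and $G_{k+1,i}$, so
\[
\lambda \;\le\; \lambda_2(H_{k+1,i}) + \lambda_2(G_{k+1,i})
\]
for every eigenvalue $\lambda$ of $G_{k,i}$ that is not an eigenvalue of $B_\Pi^{(k,i)}$. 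Lemma \ref{cong-lem} supplies the isomorphism $H_{k+1,i} \cong G_{k,i+1}$, and the outer induction hypothesis then converts $\lambda_2(H_{k+1,i})$ into $\lambda_2(B_\Pi^{(k,i+1)})$.

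The inner induction on $k$ now closes the argument by way of Lemma \ref{recursive-lem}. At the base $k = m-1$ the graph $G_{m,i}$ is disconnected and $\lambda_2(G_{m,i}) = |T_m|$, while Lemma \ref{recursive-lem} delivers $\lambda_2(B_\Pi^{(m-1,i)}) - \lambda_2(B_\Pi^{(m-1,i+1)}) = |T_m|$, so the upper bound collapses to $\lambda_2(B_\Pi^{(m-1,i)})$. For $k \le m-2$ the inner hypothesis turns $\lambda_2(G_{k+1,i})$ into $\lambda_2(B_\Pi^{(k+1,i)})$, and the identity $\lambda_2(B_\Pi^{(k,i)}) = \lambda_2(B_\Pi^{(k,i+1)}) + \lambda_2(B_\Pi^{(k+1,i)})$ from Lemma \ref{recursive-lem} again forces equality. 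Specializing to $i = 0$ and invoking Lemma \ref{quotient-lem2} gives the explicit formula $\lambda_2(B_\Pi^{(k)}) = |T_k \cap \Gamma_{k+1}| - |T_k \cap \Gamma_{k+2,k+1}|$, and taking $k = 0$ yields the final statement about $\lambda_2(G)$. The main obstacle is not any single estimate but the bookkeeping needed to see that the upper bound from Theorem \ref{Cay-thm}, the isomorphism from Lemma \ref{cong-lem}, and the additive identity from Lemma \ref{recursive-lem} are engineered to match term-by-term, so that the double recursion closes with no slack.
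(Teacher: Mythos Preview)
Your proposal is correct and follows essentially the same approach as the paper: both arguments establish $\lambda_2(G_{k,i})=\lambda_2(B_\Pi^{(k,i)})$ by a double induction on $(k,i)$, using Theorem~\ref{Cay-thm} for the upper bound, Lemma~\ref{cong-lem} for the identification $H_{k+1,i}\cong G_{k,i+1}$, and Lemma~\ref{recursive-lem} to make the recursion close exactly. The only cosmetic difference is that the paper organizes the induction as a grid (first verifying the boundary $i=a-1$ by hypothesis and the boundary $k=m-1$ by a separate descending induction on $i$, then handling interior points assuming both $(k+1,i)$ and $(k,i+1)$), whereas you run it as a nested outer-on-$i$, inner-on-$k$ loop; the logical content is identical.
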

\begin{proof}
If $a=1$, there is nothing to prove. Thus we assume that $a\geq 2$. The main idea is to prove $\lambda_2(G_{k,i})=\lambda_2(B_\Pi^{(k,i)})$ for all $0\leq k\leq m-1$ and $0\leq i\leq a-1$ by induction on $k$ and $i$.

First of all, we shall verify the induction basis. By assumption, we have known that $\lambda_2(G_{k,a-1})=\lambda_2(B_\Pi^{(k,a-1)})$ for all $0\leq k\leq m-1$. Thus it suffices to verify  $\lambda_2(G_{m-1,i})=\lambda_2(B_\Pi^{(m-1,i)})$ for all  $0\leq i\leq a-1$. If $i=a-1$, we obtain the result again by assumption. Now suppose $0\leq i<a-1$, and assume that the result holds for $i+1$, i.e., $\lambda_2(G_{m-1,i+1})=\lambda_2(B_\Pi^{(m-1,i+1)})$. We shall prove $\lambda_2(G_{m-1,i})=\lambda_2(B_\Pi^{(m-1,i)})$. According to the arguments below Theorem \ref{Cay-thm} and (\ref{eq-8}),  we only need to show $\lambda_2(B_\Pi^{(m-1,i)})\geq \lambda_2(H_{m,i})+\lambda_2(G_{m,i})$. From Lemma \ref{cong-lem} we see that $H_{m,i}\cong G_{m-1,i+1}$,  so $\lambda_2(H_{m,i})=\lambda_2(G_{m-1,i+1})=\lambda_2(B_\Pi^{(m-1,i+1)})$ by the induction hypothesis. Also, as mentioned above, we have $\lambda_2(G_{m,i})=|T_m\cap \Gamma^{(i)}|=|T_m|$ because $G_{m,i}$ is disconnected. Therefore, from Lemma \ref{recursive-lem} we deduce that
$$
\lambda_2(B_\Pi^{(m-1,i)})-\lambda_2(H_{m,i})=\lambda_2(B_\Pi^{(m-1,i)})-\lambda_2(B_\Pi^{(m-1,i+1)})=|T_m|=\lambda_2(G_{m,i}),
$$
as required. Thus we have built up  the induction basis.

Now suppose $0\leq k<m-1$ and $0\leq i<a-1$, and assume that the result holds for $k+1,i$ and $k,i+1$, i.e., $\lambda_2(G_{k+1,i})=\lambda_2(B_\Pi^{(k+1,i)})$ and $\lambda_2(G_{k,i+1})=\lambda_2(B_\Pi^{(k,i+1)})$. We shall prove $\lambda_2(G_{k,i})=\lambda_2(B_\Pi^{(k,i)})$. As above, it remains to show that  $\lambda_2(B_\Pi^{(k,i)})\geq \lambda_2(H_{k+1,i})+\lambda_2(G_{k+1,i})$. Again by Lemma \ref{cong-lem} and  the induction hypothesis, we have $\lambda_2(H_{k+1,i})=\lambda_2(G_{k,i+1})=\lambda_2(B_\Pi^{(k,i+1)})$ and $\lambda_2(G_{k+1,i})=\lambda_2(B_\Pi^{(k+1,i)})$. Then from Lemma \ref{recursive-lem} we obtain
$$
\lambda_2(B_\Pi^{(k,i)})-\lambda_2(H_{k+1,i})=\lambda_2(B_\Pi^{(k,i)})-\lambda_2(B_\Pi^{(k,i+1)})=\lambda_2(B_\Pi^{(k+1,i)})=\lambda_2(G_{k+1,i}),
$$
and the result follows.

Therefore, we may conclude that $\lambda_2(G_{k,i})=\lambda_2(B_\Pi^{(k,i)})$ for all $0\leq k\leq m-1$ and $0\leq i\leq a-1$. In particular, for $0\leq k\leq m-1$, we have
$
\lambda_2(G_k)=\lambda_2(G_{k,0})=\lambda_2(B_\Pi^{(k,0)})=|T_{k}\cap \Gamma_{k+1}|-|T_{k}\cap \Gamma_{k+2,k+1}|.
$
\end{proof}

According to Theorem \ref{normal-thm}, to prove $\lambda_2(G)=\lambda_2(G_0)=\lambda_2(B_\Pi^{(0)})=|T\cap \Gamma_1|-|T\cap \Gamma_{2,1}|$ (and as by-products, $\lambda_2(G_k)=\lambda_2(B_\Pi^{(k)})$ for $1\leq k\leq m-1$), it suffices to verify $\lambda_2(G_{k,a-1})=\lambda_2(B_\Pi^{(k,a-1)})$ for all $k\in\{0,1,\ldots,m-1\}$. Note that if $a$ is relatively large, i.e., the action of $\Gamma$ on $[n]$ is of high transitivity, then the graph $G_{k,a-1}$ will be of small order.
This makes it  easier to verify the equalities. It is well known that the symmetric group $S_n$ acts $n$-transitively on $[n]$, so Theorem \ref{normal-thm} is particularly effective for normal Cayley graphs of $S_n$. In the next section, we consider to determine the second eigenvalues of connected normal Cayley graphs of $S_n$ with $m\leq 5$.

\section{The second eigenvalues of normal Cayley graphs of symmetric groups}\label{s-4}

Let $\Gamma=S_n$ be the symmetric group on $[n]$ with $n\geq 3$. It is well known that $S_n$ acts $n$-transitively on $[n]$, and that two elements in $S_n$ are conjugated if and only if they share the same cycle type. Let $G=\mathrm{Cay}(S_n,T)$ be a normal Cayley graph of $S_n$, that is, $T$ is the disjoint union of some conjugacy classes of $S_n$. Then $G$ is connected if and only if $T$ contains some odd permutation. This is because $T$ generates a non-identity normal subgroup  of $S_n$ while $A_n$ is the unique nontrivial normal subgroup of $S_n$ for $n\neq 4$, and $A_4$ and $\{e,(1,2)(3,4),(1,3)(2,4),(1,4)(2,3)\}\leq A_4$ are the only nontrivial normal subgroups of $S_n$ for $n=4$.

In this section, as applications of Theorem  \ref{normal-thm}, we consider  the second eigenvalues of connected normal Cayley graphs of $S_n$ for which each element of the connection set has at most five supports.

\renewcommand\arraystretch{1.2}
\begin{table}[p]
\caption{The  structure of $\mathcal{C}_k^{(i)}$ for $1\leq i\leq 6$ and $k\in [n]$.\label{tab-1}}
\scriptsize
\begin{center}
\begin{tabular*}{\textwidth}{l@{\extracolsep{\fill}}l@{\extracolsep{\fill}}l}
\hline\noalign{\smallskip}
$i$&$k$&$\mathcal{C}_k^{(i)}$\\
\noalign{\smallskip}\hline\noalign{\smallskip}
$1$&$1$&$\{(1,q)\mid 2\leq q\leq n\}$   \\
$1$&$2$&$\{(1,2)\}$   \\
$1$&$\geq 3$&$\emptyset$   \\
$2$&$1$&$\{(1,q,r)\mid 2\leq q,r\leq n\}$   \\
$2$&$2$&$\{(1,2,r),(1,r,2)\mid 3\leq r\leq n\}$   \\
$2$&$3$&$\{(1,2,3),(1,3,2)\}$   \\
$2$&$\geq 4$&$\emptyset$   \\
$3$&$1$&$\{(1,q)(r,s)\mid 2\leq q,r,s\leq n\}$   \\
$3$&$2$&$\{(1,2)(r,s),(1,r)(2,s)\mid 3\leq r,s\leq n\}$   \\
$3$&$3$&$\{(1,2)(3,s),(1,3)(2,s),(1,s)(2,3)\mid 4\leq s\leq n\}$   \\
$3$&$4$&$\{(1,2)(3,4),(1,3)(2,4),(1,4)(2,3)\}$   \\
$3$&$\geq5$&$\emptyset$   \\
$4$&$1$&$\{(1,q,r,s)\mid 2\leq q,r,s\leq n\}$   \\
$4$&$2$&$\{(1,2,r,s),(1,r,2,s),(1,r,s,2)\mid 3\leq r,s\leq n\}$   \\
$4$&$3$&$\{(1,2,3,s),(1,2,s,3),(1,3,2,s),(1,3,s,2),(1,s,2,3),(1,s,3,2)\mid 4\leq s\leq n\}$   \\
$4$&$4$&$\{(1,2,3,4),(1,2,4,3),(1,3,2,4),(1,3,4,2),(1,4,2,3),(1,4,3,2)\}$   \\
$4$&$\geq5$&$\emptyset$ \\
$5$&$1$&$\{(1,p,q)(r,s),(p,q,r)(1,s)\mid 2\leq p,q,r,s\leq n\}$   \\
$5$&$2$&$\{(p,q,r)(1,2), (1,p,q)(2,r), (2,p,q)(1,r),(1,2,p)(q,r),(1,p,2)(q,r)\mid 3\leq p,q,r\leq n\}$   \\
$5$&$3$&$\left\{
\begin{array}{l|}
(1,2,3)(p,q),(1,3,2)(p,q),(1,2,p)(3,q),(1,p,2)(3,q),(1,3,p)(2,q),(1,p,3)(2,q),\\
(2,3,p)(1,q),(2,p,3)(1,q),(1,p,q)(2,3),(2,p,q)(1,3),(3,p,q)(1,2)
\end{array}
~\mbox{$4\leq p,q\leq n$}
\right\}$   \\
$5$&$4$&$\left\{
\begin{array}{l|}
(1,2,3)(4,p),(1,3,2)(4,p),(1,2,4)(3,p),(1,4,2)(3,p),(1,2,p)(3,4),\\
(1,p,2)(3,4),(1,3,4)(2,p),(1,4,3)(2,p),(1,3,p)(2,4),(1,p,3)(2,4),\\
(1,4,p)(2,3),(1,p,4)(2,3),(2,3,4)(1,p),(2,4,3)(1,p),(2,3,p)(1,4),\\
(2,p,3)(1,4),(2,4,p)(1,3),(2,p,4)(1,3),(3,4,p)(1,2),(3,p,4)(1,2)
\end{array}
~\mbox{$5\leq p\leq n$}
\right\}$   \\
$5$&$5$&$\left\{
\begin{array}{l}
(1,2,3)(4,5),(1,3,2)(4,5),(1,2,4)(3,5),(1,4,2)(3,5),(1,2,5)(3,4),\\
(1,5,2)(3,4),(1,3,4)(2,5),(1,4,3)(2,5),(1,3,5)(2,4),(1,5,3)(2,4),\\
(1,4,5)(2,3),(1,5,4)(2,3),(2,3,4)(1,5),(2,4,3)(1,5),(2,3,5)(1,4),\\
(2,5,3)(1,4),(2,4,5)(1,3),(2,5,4)(1,3),(3,4,5)(1,2),(3,5,4)(1,2)
\end{array}\right\}$    \\
$5$&$\geq6$&$\emptyset$ \\
$6$&$1$&$\{(1,q,r,s,t)\mid 2\leq q,r,s,t\leq n\}$   \\
$6$&$2$&$\{(1,2,r,s,t),(1,r,2,s,t),(1,r,s,2,t),(1,r,s,t,2)\mid 3\leq r,s,t\leq n\}$   \\
$6$&$3$&$\left\{
\begin{array}{l|}
(1,2,3,s,t),(1,3,2,s,t),(1,2,s,3,t),(1,3,s,2,t),(1,2,s,t,3),(1,3,s,t,2),\\
(1,s,2,3,t),(1,s,3,2,t),(1,s,2,t,3),(1,s,3,t,2),(1,s,t,2,3),(1,s,t,3,2)
\end{array}
~\mbox{$4\leq s,t\leq n$}
\right\}$\\
$6$&$4$&$\left\{
\begin{array}{l|}
(1,2,3,4,t),(1,2,3,t,4),(1,2,4,3,t),(1,2,4,t,3),(1,2,t,3,4),(1,2,t,4,3),\\
(1,3,2,4,t),(1,3,2,t,4),(1,3,4,2,t),(1,3,4,t,2),(1,3,t,2,4),(1,3,t,4,2),\\
(1,4,2,3,t),(1,4,2,t,3),(1,4,3,2,t),(1,4,3,t,2),(1,4,t,2,3),(1,4,t,3,2),\\
(1,t,2,3,4),(1,t,2,4,3),(1,t,3,2,4),(1,t,3,4,2),(1,t,4,2,3),(1,t,4,3,2)
\end{array}
~\mbox{$5\leq t\leq n$}
\right\}$   \\
$6$&$5$&$\left\{
\begin{array}{l}
(1,2,3,4,5),(1,2,3,5,4),(1,2,4,3,5),(1,2,4,5,3),(1,2,5,3,4),(1,2,5,4,3),\\
(1,3,2,4,5),(1,3,2,5,4),(1,3,4,2,5),(1,3,4,5,2),(1,3,5,2,4),(1,3,5,4,2),\\
(1,4,2,3,5),(1,4,2,5,3),(1,4,3,2,5),(1,4,3,5,2),(1,4,5,2,3),(1,4,5,3,2),\\
(1,5,2,3,4),(1,5,2,4,3),(1,5,3,2,4),(1,5,3,4,2),(1,5,4,2,3),(1,5,4,3,2)
\end{array}
\right\}$    \\
$6$&$\geq6$&$\emptyset$ \\
\hline
\end{tabular*}
\end{center}
\end{table}

For convenience, we first list all the nontrivial conjugacy classes of $S_n$ with each element having at most five supports:
\begin{equation}\label{eq-21}
\left\{
\begin{aligned}
&\mathcal{C}^{(1)}=\{(p,q)\mid 1\leq p,q\leq n\};\\
&\mathcal{C}^{(2)}=\{(p,q,r)\mid 1\leq p,q,r\leq n\};\\
&\mathcal{C}^{(3)}=\{(p,q)(r,s)\mid 1\leq p,q,r,s\leq n\};\\
&\mathcal{C}^{(4)}=\{(p,q,r,s)\mid 1\leq p,q,r,s\leq n\};\\
&\mathcal{C}^{(5)}=\{(p,q,r)(s,t)\mid 1\leq p,q,r,s,t\leq n\};\\
&\mathcal{C}^{(6)}=\{(p,q,r,s,t)\mid 1\leq p,q,r,s,t\leq n\},
\end{aligned}
\right.
\end{equation}
where $p,q,r,s,t$ are pairwise  distinct. For  $k\in[n]$, we denote by $\mathcal{C}_k^{(i)}$ (see Table \ref{tab-1}) the set of elements in $\mathcal{C}^{(i)}$ that moves each point of $\{1,2,\ldots,k\}$, where $1\leq i\leq 6$.

Now suppose that  $G=\mathrm{Cay}(S_n,T)$ ($=G_0$)  is a  normal Cayley graph of $S_n$ with $m=\max_{\tau\in T} |\mathrm{supp}(\tau)|\leq 5$. For $k\in [n]$, let $T_k=T\setminus(T\cap (\cup_{i=1}^k (S_n)_{i}))$ (see Claim \ref{claim-2})  and  $G_k=\mathrm{Cay}(S_n,T_k)$  be defined  as in  (\ref{eq-13}).  Then $T$ ($=T_0$)  and $T_k$ ($k\in [n]$) can be respectively written as $T=\cup_{i\in \mathcal{I}_T}\mathcal{C}^{(i)}$ (see (\ref{eq-21})) and $T_k=\cup_{i\in \mathcal{I}_T}\mathcal{C}_k^{(i)}$ (see Table \ref{tab-1}), where $\mathcal{I}_T$ is  some nonempty subset of $\{1,2,3,4,5,6\}$. Moreover, by the arguments at the beginning of this section, we obtain that $G=\mathrm{Cay}(S_n,T)$ is connected if and only if  $T=\cup_{i\in \mathcal{I}_T}\mathcal{C}^{(i)}$ with
\begin{equation}\label{eq-23}
\mathcal{I}_T\in \mathcal{P}\setminus\{\emptyset,\{2\},\{3\},\{6\},\{2,3\},\{2,6\},\{3,6\},\{2,3,6\}\}
\end{equation}
where $\mathcal{P}$ is the power set of $\{1,2,\ldots,6\}$.

Now we give the main result of this section, which determines the second eigenvalues of a majority of connected normal  Cayley graphs (and some subgraphs of these graphs) on $S_n$ satisfying $m=\max_{\tau\in T}|\mathrm{supp}(\tau)|\leq 5$.

\begin{thm}\label{symmetric-thm}
Let $G=\mathrm{Cay}(S_n,T)$ ($=G_0$)  be a connected normal Cayley graph of $S_n$ ($n\geq 7$) with $m=\max_{\tau\in T}|\mathrm{supp}(\tau)|\leq 5$ (that is, $T=\cup_{i\in \mathcal{I}_T}\mathcal{C}^{(i)}$ with $\mathcal{I}_T$ given in (\ref{eq-23})). Let $G_k$ and $T_k$ be defined as in  (\ref{eq-13}). If $\mathcal{I}_T\neq \{1,3\},\{1,6\},\{4,6\},\{1,2,3\}, $ $\{1,2,6\},\{1,3,6\},\{1,4,6\},\{2,4,6\},\{3,4,6\},\{1,2,3,6\},\{1,2,4,6\}, \{1,3,4,6\}, \{2,3,$\\ $4,6\}, \{2,3,5,6\}, \{1,2,3,4,6\}$, then for $0\leq k\leq m-1$,  the graph $G_k$ is connected and  has second eigenvalue
$$\lambda_2(G_k)=\lambda_2(B_\Pi^{(k)})=|T_k\cap (S_n)_{k+1}|-|T_k\cap (S_n)_{k+2,k+1}|.$$
\end{thm}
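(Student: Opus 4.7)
The overall strategy is to invoke Theorem \ref{normal-thm} with $\Gamma = S_n$, exploiting the fact that $S_n$ acts $n$-transitively on $[n]$. Since $m \leq 5$ and $n \geq 7$, we may set $a = n - m \geq 2$, so the required $(m+a)$-transitivity hypothesis is automatic. Theorem \ref{normal-thm} then reduces the desired identity $\lambda_2(G_k) = \lambda_2(B_\Pi^{(k)}) = |T_k \cap (S_n)_{k+1}| - |T_k \cap (S_n)_{k+2,k+1}|$ for $0 \leq k \leq m - 1$ to the much smaller base-case verification $\lambda_2(G_{k,a-1}) = \lambda_2(B_\Pi^{(k,a-1)})$, still for all $k \in \{0,1,\ldots,m-1\}$.

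The crucial observation that makes this base case tractable is that $\Gamma^{(a-1)} = \bigcap_{j=1}^{n-m-1}(S_n)_{n-j+1}$ is the pointwise stabilizer of $\{m+2,m+3,\ldots,n\}$, hence canonically isomorphic to $S_{m+1}$ acting on $[m+1]$. Under this identification, $G_{k,a-1} \cong \mathrm{Cay}(S_{m+1}, T_k \cap S_{m+1})$ is a Cayley graph on at most $(m+1)! \leq 720$ vertices. In particular, $G_{0,a-1}$ is itself a \emph{normal} Cayley graph of $S_{m+1}$, since $T \cap S_{m+1}$ is automatically closed under $S_{m+1}$-conjugation. The base case of the induction in Theorem \ref{normal-thm} therefore becomes a finite verification on a fixed catalogue of small explicit graphs.

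The proof then proceeds by case analysis over the 41 allowed families $\mathcal{I}_T$. For each such family and each $k \in \{0,1,\ldots,m-1\}$, the restricted connection set $T_k \cap S_{m+1}$ is read off directly from (\ref{eq-21}) and Table \ref{tab-1}. The quantity $\lambda_2(B_\Pi^{(k,a-1)}) = |T_k \cap (S_{m+1})_{k+1}| - |T_k \cap (S_{m+1})_{k+2,k+1}|$ is computed in closed form via Lemma \ref{quotient-lem2} applied inside $S_{m+1}$. For the companion quantity $\lambda_2(G_{k,a-1})$, when $k = 0$ one invokes Theorem \ref{normal-thm-0} with the irreducible character table of $S_{m+1}$; when $k \geq 1$ one performs a direct adjacency-matrix computation on the explicit small graph (whose order is uniformly bounded by $720$). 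Equality is then checked case by case. In parallel, for each allowed $\mathcal{I}_T$ and each $k$, one must verify that $G_k$ is connected, ensuring that the top eigenvalue $|T_k|$ is simple so that the quotient-matrix value really gives $\lambda_2(G_k)$ rather than $\lambda_1(G_k)$.

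The main obstacle is not conceptual but combinatorial: there are 41 families and up to $m = 5$ values of $k$ per family, yielding well over one hundred individual small-graph eigenvalue checks that must be tabulated and carried out; each individual check is routine, but the bookkeeping is substantial and is best presented in a summary table. The 15 excluded families in the statement are precisely the ones for which this finite verification breaks down: either some irreducible character of $S_{m+1}$ produces an eigenvalue of $G_{k,a-1}$ strictly exceeding $\lambda_2(B_\Pi^{(k,a-1)})$, so that the base inequality of Theorem \ref{normal-thm} fails at some $k$, or some $G_k$ is itself disconnected and then $\lambda_2(G_k) = |T_k|$ strictly exceeds $\lambda_2(B_\Pi^{(k)})$. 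Handling those excluded families would require different techniques and is therefore deliberately left outside the scope of the present theorem.
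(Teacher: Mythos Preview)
Your overall architecture is exactly right: invoke Theorem \ref{normal-thm}, use the high transitivity of $S_n$, and reduce to a finite computer verification. However, your choice of the parameter $a$ is wrong and causes the base-case verification to fail for some of the 41 families you are trying to cover.

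You set $a = n - m$, so that $\Gamma^{(a-1)} \cong S_{m+1}$ and the base case lives on at most $6! = 720$ vertices. The paper instead takes $a = n - 6$, so that $\Gamma^{(a-1)} \cong S_7$ regardless of $m$, and verifies (\ref{eq-24}) inside $S_7$. These two choices are \emph{not} interchangeable: the base case at $S_{m+1}$ is strictly harder to satisfy than the one at $S_7$ (the induction in Theorem \ref{normal-thm} propagates from $i = a-1$ downward, not upward). Concretely, take $\mathcal{I}_T = \{4\}$, so $m = 4$ and your base group is $S_5$. Here $T \cap S_5$ is the set of all $30$ four-cycles, and Lemma \ref{quotient-lem2} gives $\lambda_2(B_\Pi^{(0,a-1)}) = |T \cap (S_5)_1| - |T \cap (S_5)_{2,1}| = 6 - 6 = 0$. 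But applying Theorem \ref{normal-thm-0} with the character table of $S_5$ shows that the irreducible character indexed by the partition $(2,2,1)$ yields the eigenvalue $\tfrac{30}{5}\cdot 1 = 6 > 0$, so $\lambda_2(G_{0,a-1}) = 6 \neq \lambda_2(B_\Pi^{(0,a-1)})$. Thus the hypothesis of Theorem \ref{normal-thm} fails for $\mathcal{I}_T = \{4\}$ with your choice of $a$, even though $\{4\}$ is one of the 41 good families in the theorem. Your assertion that the 15 excluded families are ``precisely the ones for which this finite verification breaks down'' is therefore false for your base case; that list was computed relative to $S_7$, not $S_{m+1}$.

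The fix is simply to take $a = n - 6$ as the paper does, so that the base verification is uniformly inside $S_7$; then the 41 families in the statement are exactly those that pass. Two smaller remarks: you do not need a separate connectedness argument for $G_k$, since once $\lambda_2(G_k) = \lambda_2(B_\Pi^{(k)}) < |T_k|$ is established, connectedness is automatic; and your suggested reason that some excluded families might have $G_k$ disconnected is not part of the paper's analysis.
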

\begin{proof}
Take  $a=n-6$ ($\geq 1$). Since $n\geq 7$ and $m\leq 5$,  we see that $S_n$ acts $(m+a)$-transitively on $[n]$ due to $m+a<n$.  By Theorem \ref{normal-thm}, to prove $\lambda_2(G_k)=\lambda_2(B_\Pi^{(k)})$ for $0\leq k\leq m-1$, it remains to verify $\lambda_2(G_{k,a-1})=\lambda_2(B_\Pi^{(k,a-1)})$  for $0\leq k\leq m-1$. Since $S_n^{(a-1)}=S_n^{(n-7)}=\cap_{i=1}^{n-7}(S_n)_{n-i+1}\cong S_7$, we have $G_{k,a-1}=\mathrm{Cay}(S_n^{(n-7)},T_k\cap S_n^{(n-7)})\cong\mathrm{Cay}(S_7,T_k\cap S_7)$ according to  (\ref{eq-18}). Also note that
$\lambda_2(B_\Pi^{(k,a-1)})=|T_{k}\cap S_n^{(a-1)}\cap (S_n)_{k+1}|-|T_{k}\cap S_n^{(a-1)}\cap (S_n)_{k+2,k+1}|=|T_{k} \cap (S_7)_{k+1}|-|T_{k}\cap (S_7)_{k+2,k+1}|$ by (\ref{eq-19}). Thus the problem is reduced to verify
\begin{equation}\label{eq-24}
\lambda_2(\mathrm{Cay}(S_7,T_k\cap S_7))=|T_{k} \cap (S_7)_{k+1}|-|T_{k}\cap (S_7)_{k+2,k+1}|
\end{equation}
for $0\leq k\leq m-1$. Recall that $T_0=T=\cup_{i\in \mathcal{I}_T}\mathcal{C}^{(i)}$ with $\mathcal{I}_T$ given in (\ref{eq-23}), and $T_k=\cup_{i\in \mathcal{I}_T}\mathcal{C}_k^{(i)}$ is just the set of $\tau\in T$ such that $\{1,2,\ldots,k\}\subseteq \mathrm{supp}(\tau)$ for $1\leq k\leq m-1$. Using computer, we can check that  (\ref{eq-24}) is true except for those $T$'s with $\mathcal{I}_T=\{1,3\},\{1,6\},\{4,6\},\{1,2,3\}, \{1,2,6\},\{1,3,6\},\{1,4,6\},\{2,4,6\},\{3,4,6\},\{1,2,3,6\},$ $\{1,2,4,6\}, \{1,3,4,6\}, \{2,3,4,6\}, \{2,3,5,6\}$ or $\{1,2,3,4,6\}$. Therefore, for the remaining $T$'s,  we may conclude that
$$\lambda_2(G_k)=\lambda_2(B_\Pi^{(k)})=|T_k\cap (S_n)_{k+1}|-|T_k\cap (S_n)_{k+2,k+1}|,$$
where $0\leq k\leq m-1$ (in Table \ref{tab-2}, we list the exact values of the first two largest eigenvalues of these $G_k$'s); and furthermore, we observe that $\lambda_2(G_k)=\lambda_2(B_\Pi^{(k)})<|T_k|=\lambda_1(G_k)$, so  $G_k$ is also connected for $1\leq k\leq m-1$. 

We complete the proof.
\end{proof}

\begin{center}
\scriptsize
\renewcommand\arraystretch{1.0}
\begin{longtable}{lllll}

\caption{\small The  first two  eigenvalues of $G_k=\mathrm{Cay}(S_n,T_k)$, where $T_k=\cup_{i\in \mathcal{I}_T}\mathcal{C}_k^{(i)}$.} \label{tab-2} \\
\hline\noalign{\smallskip}
\multicolumn{1}{l}{$\mathcal{I}_T$} &\multicolumn{1}{l}{$m$} &\multicolumn{1}{l}{$k$} & \multicolumn{1}{l}{$\lambda_1(G_k)$} & \multicolumn{1}{l}{$\lambda_2(G_k)$}\\ 
\noalign{\smallskip}\hline\noalign{\smallskip}
\endfirsthead

\multicolumn{5}{l}
{{continued from previous page}} \\
\hline\noalign{\smallskip}
\multicolumn{1}{l}{$\mathcal{I}_T$} &\multicolumn{1}{l}{$m$} &\multicolumn{1}{l}{$k$} & \multicolumn{1}{l}{$\lambda_1(G_k)$} & \multicolumn{1}{l}{$\lambda_2(G_k)$}\\ 
\noalign{\smallskip}\hline\noalign{\smallskip}
\endhead

\hline \multicolumn{5}{r}{{continued on next page}} \\ 
\endfoot
\hline 
\endlastfoot

$\{1\}$& $2$ & $0$ & $(n(n\!-\!1))/2$ & $(n(n\!-\!3))/2$\\
& & $1$ & $n\!-\!1$ & $n\!-\!2$\\
$\{4\}$& $4$  & $0$ & $(n(n\!-\!1)(n\!-\!2)(n\!-\!3))/4$ & $(n(n\!-\!2)(n\!-\!3)(n\!-\!5))/4$\\
& & $1$ & $(n\!-\!1)(n\!-\!2)(n\!-\!3)$ & $(n\!-\!3)(n^2\!-\!6n\!+\!6)$\\
& & $2$ & $3(n\!-\!2)(n\!-\!3)$ & $3n^2\!-\!21n\!+\!34$\\
& & $3$ & $6(n\!-\!3)$ & $6(n\!-\!4)$\\
$\{5\}$& $5$  & $0$ & $(n(n\!-\!1)(n\!-\!2)(n\!-\!3)(n\!-\!4))/6$ & $(n(n\!-\!2)(n\!-\!3)(n\!-\!4)(n\!-\!6))/6$\\
& & $1$ & $(5(n\!-\!1)(n\!-\!2)(n\!-\!3)(n\!-\!4))/6$ & $(5(n\!-\!3)(n\!-\!4)(n^2\!-\!7n\!+\!7))/6$\\
& & $2$ & $(10(n\!-\!2)(n\!-\!3)(n\!-\!4))/3$ & $(5(n\!-\!4)(2n^2\!-\!16n\!+\!27))/3$\\
& & $3$ & $10(n\!-\!3)(n\!-\!4)$ & $5(2n^2\!-\!18n\!+\!39)$\\
& & $4$ & $20(n\!-\!4)$ & $20(n\!-\!5)$\\
$\{1,2\}$& $3$  & $0$ & $(n(2n\!-\!1)(n\!-\!1))/6$ & $(n(n\!-\!1)(2n\!-\!7))/6$\\
& & $1$ & $(n\!-\!1)^2$ & $(n\!-\!1)(n\!-\!3)$\\
& & $2$ & $2n\!-\!3$ & $2n\!-\!5$\\
$\{1,4\}$& $4$ & $0$ & $(n(n\!-\!1)(n^2\!-\!5n\!+\!8))/4$ & $(n(n\!-\!4)(n\!-\!3)^2)/4$\\
& & $1$ & $(n\!-\!1)(n^2\!-\!5n\!+\!7)$ & $(n\!-\!4)(n^2\!-\!5n\!+\!5)$\\
& & $2$ & $3n^2\!-\!15n\!+\!19$ & $3n^2\!-\!21n\!+\!35$\\
& & $3$ & $6(n\!-\!3)$ & $6(n\!-\!4)$\\
$\{1,5\}$& $5$ & $0$ & $(n(n\!-\!1)(n^3\!-\!9n^2\!+\!26n\!-\!21))/6$ & $(n(n\!-\!5)(n\!-\!3)(n^2\!-\!7n\!+\!9))/6$\\
& & $1$ & $((n\!-\!1)(5n^3\!-\!45n^2\!+\!130n\!-\!114))/6$ & $(5n^4\!-\!70n^3\!+\!340n^2\!-\!659n\!+\!408)/6$\\
& & $2$ & $(10n^3\!-\!90n^2\!+\!260n\!-\!237)/3$ & $(10n^3\!-\!120n^2\!+\!455n\!-\!537)/3$\\
& & $3$ & $10(n\!-\!3)(n\!-\!4)$ & $5(2n^2\!-\!18n\!+\!39)$\\
& & $4$ & $20(n\!-\!4)$ & $20(n\!-\!5)$\\
$\{2,4\}$& $4$ & $0$ & $(n(3n\!-\!5)(n\!-\!1)(n\!-\!2))/12$ & $(n(n\!-\!2)(3n^2\!-\!20n\!+\!29))/12$\\
& & $1$ & $(n\!-\!1)(n\!-\!2)^2$ & $n^3\!-\!8n^2\!+\!19n\!-\!13$\\
& & $2$ & $(n\!-\!2)(3n\!-\!7)$ & $(3n\!-\!7)(n\!-\!4)$\\
& & $3$ & $2(3n\!-\!8)$ & $2(3n\!-\!11)$\\
$\{2,5\}$& $5$ & $0$ & $(n(n\!-\!1)(n\!-\!2)(n^2\!-\!7n\!+\!14))/6$ & $(n(n\!-\!2)(n\!-\!5)(n\!-\!4)^2)/6$\\
& & $1$ & $((n\!-\!1)(n\!-\!2)(5n^2\!-\!35n\!+\!66))/6$ & $((n\!-\!5)(5n^3\!-\!45n^2\!+\!121n\!-\!90))/6$\\
& & $2$ & $(2(n\!-\!2)(5n^2\!-\!35n\!+\!63))/3$ & $(10n^3\!-\!120n^2\!+\!461n\!-\!558)/3$\\
& & $3$ & $2(5n^2\!-\!35n\!+\!61)$ & $10n^2\!-\!90n\!+\!197$\\
& & $4$ & $20(n\!-\!4)$ & $20(n\!-\!5)$\\
$\{3,4\}$& $4$ & $0$ & $(3n(n\!-\!1)(n\!-\!2)(n\!-\!3))/8$ & $(3n(n\!-\!2)(n\!-\!3)(n\!-\!5))/8$\\
& & $1$ & $(3(n\!-\!1)(n\!-\!2)(n\!-\!3))/2$ & $(3(n\!-\!3)(n^2\!-\!6n\!+\!6))/2$\\
& & $2$ & $(9(n\!-\!2)(n\!-\!3))/2$ & $(3(3n^2\!-\!21n\!+\!34))/2$\\
& & $3$ & $9(n\!-\!3)$ & $9(n\!-\!4)$\\
$\{3,5\}$& $5$ & $0$ & $(n(n\!-\!1)(n\!-\!2)(n\!-\!3)(4n\!-\!13))/24$ & $(n(n\!-\!2)(n\!-\!3)(4n^2\!-\!37n\!+\!81))/24$\\
& & $1$ & $((n\!-\!1)(n\!-\!2)(n\!-\!3)(5n\!-\!17))/6$ & $((n\!-\!3)(5n^3\!-\!52n^2\!+\!157n\!-\!122))/6$\\
& & $2$ & $((n\!-\!3)(20n\!-\!71)(n\!-\!2))/6$ & $(20n^3\!-\!231n^2\!+\!847n\!-\!978)/6$\\
& & $3$ & $(n\!-\!3)(10n\!-\!37)$ & $10n^2\!-\!87n\!+\!183$\\
& & $4$ & $20n\!-\!77$ & $20n\!-\!97$\\
$\{4,5\}$& $5$ & $0$ & $(n(2n\!-\!5)(n\!-\!1)(n\!-\!2)(n\!-\!3))/12$ & $(n(n\!-\!2)(2n\!-\!11)(n\!-\!3)^2)/12$\\
& & $1$ & $((5n\!-\!14)(n\!-\!1)(n\!-\!2)(n\!-\!3))/6$ & $((n\!-\!3)(5n^3\!-\!49n^2\!+\!139n\!-\!104))/6$\\
& & $2$ & $((n\!-\!3)(10n\!-\!31)(n\!-\!2))/3$ & $(10n^3\!-\!111n^2\!+\!392n\!-\!438)/3$\\
& & $3$ & $2(n\!-\!3)(5n\!-\!17)$ & $10n^2\!-\!84n\!+\!171$\\
& & $4$ & $2(10n\!-\!37)$ & $2(10n\!-\!47)$\\
$\{5,6\}$& $5$ & $0$ & $(11n(n\!-\!1)(n\!-\!2)(n\!-\!3)(n\!-\!4))/30$ & $(11n(n\!-\!2)(n\!-\!3)(n\!-\!4)(n\!-\!6))/30$\\
& & $1$ & $(11(n\!-\!1)(n\!-\!2)(n\!-\!3)(n\!-\!4))/6$ & $(11(n\!-\!4)(n\!-\!3)(n^2\!-\!7n\!+\!7))/6$\\
& & $2$ & $(22(n\!-\!3)(n\!-\!4)(n\!-\!2))/3$ & $(11(n\!-\!4)(2n^2\!-\!16n\!+\!27))/3$\\
& & $3$ & $22(n\!-\!3)(n\!-\!4)$ & $11(2n^2\!-\!18n\!+\!39)$\\
& & $4$ & $44(n\!-\!4)$ & $44(n\!-\!5)$\\
$\{1,2,4\}$& $4$ & $0$ & $(n(n\!-\!1)(3n^2\!-\!11n\!+\!16))/12$ & $(n(n\!-\!4)(3n^2\!-\!14n\!+\!19))/12$\\
& & $1$ & $(n\!-\!1)(n^2\!-\!4n\!+\!5)$ & $(n\!-\!3)(n^2\!-\!5n\!+\!5)$\\
& & $2$ & $3n^2\!-\!13n\!+\!15$ & $3n^2\!-\!19n\!+\!29$\\
& & $3$ & $2(3n\!-\!8)$ & $2(3n\!-\!11)$\\
$\{1,2,5\}$& $5$ & $0$ & $(n(n\!-\!1)(n^3\!-\!9n^2\!+\!28n\!-\!25))/6$ & $(n(n^4\!-\!15n^3\!+\!82n^2\!-\!189n\!+\!151))/6$\\
& & $1$ & $((n\!-\!1)(5n^3\!-\!45n^2\!+\!136n\!-\!126))/6$ & $((n\!-\!3)(5n^3\!-\!55n^2\!+\!181n\!-\!146))/6$\\
& & $2$ & $(10n^3\!-\!90n^2\!+\!266n\!-\!249)/3$ & $((n\!-\!5)(10n^2\!-\!70n\!+\!111))/3$\\
& & $3$ & $2(5n^2\!-\!35n\!+\!61)$ & $10n^2\!-\!90n\!+\!197$\\
& & $4$ & $20(n\!-\!4)$ & $20(n\!-\!5)$\\
$\{1,3,4\}$& $4$ & $0$ & $(n(n\!-\!1)(3n^2\!-\!15n\!+\!22))/8$ & $(n(n\!-\!3)(3n^2\!-\!21n\!+\!34))/8$\\
& & $1$ & $((n\!-\!1)(3n^2\!-\!15n\!+\!20))/2$ & $(3n^3\!-\!27n^2\!+\!74n\!-\!58)/2$\\
& & $2$ & $((3n\!-\!7)(3n\!-\!8))/2$ & $((3n\!-\!8)(3n\!-\!13))/2$\\
& & $3$ & $9(n\!-\!3)$ & $9(n\!-\!4)$\\
$\{1,3,5\}$& $5$ & $0$ & $(n(n\!-\!1)(4n^3\!-\!33n^2\!+\!89n\!-\!66))/24$ & $(n(n\!-\!3)(n\!-\!5)(4n^2\!-\!25n\!+\!30))/24$\\
& & $1$ & $((n\!-\!1)(5n^3\!-\!42n^2\!+\!115n\!-\!96))/6$ & $(5n^4\!-\!67n^3\!+\!313n^2\!-\!587n\!+\!354)/6$\\
& & $2$ & $(20n^3\!-\!171n^2\!+\!475n\!-\!420)/6$ & $((n\!-\!4)(20n^2\!-\!151n\!+\!243))/6$\\
& & $3$ & $(n\!-\!3)(10n\!-\!37)$ & $10n^2\!-\!87n\!+\!183$\\
& & $4$ & $20n\!-\!77$ & $20n\!-\!97$\\
$\{1,4,5\}$& $5$ & $0$ & $(n(2n^2\!-\!13n\!+\!24)(n\!-\!1)^2)/12$ & $(n(n\!-\!3)(n\!-\!4)(n\!-\!5)(2n\!-\!3))/12$\\
& & $1$ & $((n\!-\!1)(5n^3\!-\!39n^2\!+\!100n\!-\!78))/6$ & $((n\!-\!4)(n\!-\!5)(5n^2\!-\!19n\!+\!15))/6$\\
& & $2$ & $(10n^3\!-\!81n^2\!+\!215n\!-\!183)/3$ & $((n\!-\!5)(10n^2\!-\!61n\!+\!87))/3$\\
& & $3$ & $2(n\!-\!3)(5n\!-\!17)$ & $10n^2\!-\!84n\!+\!171$\\
& & $4$ & $2(10n\!-\!37)$ & $2(10n\!-\!47)$\\
$\{1,5,6\}$& $5$ & $0$ & $(n(n\!-\!1)(11n^3\!-\!99n^2\!+\!286n\!-\!249))/30$ & $(n(n\!-\!3)(11n^3\!-\!132n^2\!+\!484n\!-\!513))/30$\\
& & $1$ & $((n\!-\!1)(11n^3\!-\!99n^2\!+\!286n\!-\!258))/6$ & $(11n^4\!-\!154n^3\!+\!748n^2\!-\!1457n\!+\!912)/6$\\
& & $2$ & $(22n^3\!-\!198n^2\!+\!572n\!-\!525)/3$ & $(22n^3\!-\!264n^2\!+\!1001n\!-\!1185)/3$\\
& & $3$ & $22(n\!-\!3)(n\!-\!4)$ & $11(2n^2\!-\!18n\!+\!39)$\\
& & $4$ & $44(n\!-\!4)$ & $44(n\!-\!5)$\\
$\{2,3,4\}$& $4$ & $0$ & $(n(n\!-\!1)(n\!-\!2)(9n\!-\!19))/24$ & $(n(n\!-\!2)(9n^2\!-\!64n\!+\!103))/24$\\
& & $1$ & $((n\!-\!1)(n\!-\!2)(3n\!-\!7))/2$ & $(3n^3\!-\!25n^2\!+\!62n\!-\!44)/2$\\
& & $2$ & $((n\!-\!2)(9n\!-\!23))/2$ & $(9n^2\!-\!59n\!+\!90)/2$\\
& & $3$ & $9n\!-\!25$ & $9n\!-\!34$\\
$\{2,3,5\}$& $5$ & $0$ & $(n(n\!-\!1)(n\!-\!2)(4n^2\!-\!25n\!+\!47))/24$ & $(n(n\!-\!2)(n\!-\!5)(4n^2\!-\!29n\!+\!55))/24$\\
& & $1$ & $((n\!-\!1)(n\!-\!2)(5n^2\!-\!32n\!+\!57))/6$ & $((n\!-\!4)(5n^3\!-\!47n^2\!+\!131n\!-\!99))/6$\\
& & $2$ & $((n\!-\!2)(20n^2\!-\!131n\!+\!225))/6$ & $(20n^3\!-\!231n^2\!+\!859n\!-\!1014)/6$\\
& & $3$ & $10n^2\!-\!67n\!+\!113$ & $(10n\!-\!37)(n\!-\!5)$\\
& & $4$ & $20n\!-\!77$ & $20n\!-\!97$\\
$\{2,4,5\}$& $5$ & $0$ & $(n(n\!-\!1)(n\!-\!2)(2n^2\!-\!11n\!+\!19))/12$ & $(n(n\!-\!2)(n\!-\!5)(2n^2\!-\!13n\!+\!23))/12$\\
& & $1$ & $((n\!-\!1)(n\!-\!2)(5n^2\!-\!29n\!+\!48))/6$ & $(5n^4\!-\!64n^3\!+\!292n^2\!-\!551n\!+\!342)/6$\\
& & $2$ & $((n\!-\!2)(10n^2\!-\!61n\!+\!99))/3$ & $((n\!-\!4)(10n^2\!-\!71n\!+\!114))/3$\\
& & $3$ & $2(5n^2\!-\!32n\!+\!52)$ & $10n^2\!-\!84n\!+\!173$\\
& & $4$ & $2(10n\!-\!37)$ & $2(10n\!-\!47)$\\
$\{2,5,6\}$& $5$ & $0$ & $(n(n\!-\!1)(n\!-\!2)(11n^2\!-\!77n\!+\!142))/30$ & $(n(n\!-\!2)(n\!-\!4)(11n^2\!-\!99n\!+\!208))/30$\\
& & $1$ & $((n\!-\!1)(n\!-\!2)(11n^2\!-\!77n\!+\!138))/6$ & $(11n^4\!-\!154n^3\!+\!754n^2\!-\!1493n\!+\!954)/6$\\
& & $2$ & $(2(n\!-\!2)(11n^2\!-\!77n\!+\!135))/3$ & $(22n^3\!-\!264n^2\!+\!1007n\!-\!1206)/3$\\
& & $3$ & $2(11n^2\!-\!77n\!+\!133)$ & $22n^2\!-\!198n\!+\!431$\\
& & $4$ & $44(n\!-\!4)$ & $44(n\!-\!5)$\\
$\{3,4,5\}$& $5$ & $0$ & $(n(4n\!-\!7)(n\!-\!1)(n\!-\!2)(n\!-\!3))/24$ & $(n(n\!-\!2)(n\!-\!3)(4n^2\!-\!31n\!+\!51))/24$\\
& & $1$ & $((n\!-\!1)(n\!-\!2)(n\!-\!3)(5n\!-\!11))/6$ & $((n\!-\!3)(5n^3\!-\!46n^2\!+\!121n\!-\!86))/6$\\
& & $2$ & $((n\!-\!3)(20n\!-\!53)(n\!-\!2))/6$ & $(20n^3\!-\!213n^2\!+\!721n\!-\!774)/6$\\
& & $3$ & $(n\!-\!3)(10n\!-\!31)$ & $10n^2\!-\!81n\!+\!159$\\
& & $4$ & $20n\!-\!71$ & $20n\!-\!91$\\
$\{3,5,6\}$& $5$ & $0$ & $(n(n\!-\!1)(n\!-\!2)(n\!-\!3)(44n\!-\!161))/120$ & $(n(n\!-\!2)(n\!-\!3)(44n^2\!-\!425n\!+\!981))/120$\\
& & $1$ & $((n\!-\!1)(n\!-\!2)(n\!-\!3)(11n\!-\!41))/6$ & $((n\!-\!3)(11n^3\!-\!118n^2\!+\!367n\!-\!290))/6$\\
& & $2$ & $((n\!-\!3)(44n\!-\!167)(n\!-\!2))/6$ & $(44n^3\!-\!519n^2\!+\!1939n\!-\!2274)/6$\\
& & $3$ & $(n\!-\!3)(22n\!-\!85)$ & $22n^2\!-\!195n\!+\!417$\\
& & $4$ & $44n\!-\!173$ & $44n\!-\!217$\\
$\{4,5,6\}$& $5$ & $0$ & $(n(n\!-\!1)(n\!-\!2)(n\!-\!3)(22n\!-\!73))/60$ & $(n(n\!-\!2)(n\!-\!3)(22n^2\!-\!205n\!+\!453))/60$\\
& & $1$ & $((n\!-\!1)(n\!-\!2)(n\!-\!3)(11n\!-\!38))/6$ & $((n\!-\!3)(11n^3\!-\!115n^2\!+\!349n\!-\!272))/6$\\
& & $2$ & $((n\!-\!3)(22n\!-\!79)(n\!-\!2))/3$ & $(22n^3\!-\!255n^2\!+\!938n\!-\!1086)/3$\\
& & $3$ & $2(n\!-\!3)(11n\!-\!41)$ & $22n^2\!-\!192n\!+\!405$\\
& & $4$ & $2(22n\!-\!85)$ & $2(22n\!-\!107)$\\
$\{1,2,3,4\}$& $4$ & $0$ & $(n(n\!-\!1)(9n^2\!-\!37n\!+\!50))/24$ & $(n(9n^3\!-\!82n^2\!+\!243n\!-\!242))/24$\\
& & $1$ & $((n\!-\!1)(3n^2\!-\!13n\!+\!16))/2$ & $((n\!-\!3)(n\!-\!4)(3n\!-\!4))/2$\\
& & $2$ & $(9n^2\!-\!41n\!+\!48)/2$ & $((9n\!-\!23)(n\!-\!4))/2$\\
& & $3$ & $9n\!-\!25$ & $9n\!-\!34$\\
$\{1,2,3,5\}$& $5$ & $0$ & $(n(n\!-\!1)(4n^3\!-\!33n^2\!+\!97n\!-\!82))/24$ & $(n(4n^4\!-\!57n^3\!+\!298n^2\!-\!663n\!+\!514))/24$\\
& & $1$ & $((n\!-\!1)(5n^3\!-\!42n^2\!+\!121n\!-\!108))/6$ & $((n\!-\!3)(5n^3\!-\!52n^2\!+\!163n\!-\!128))/6$\\
& & $2$ & $(20n^3\!-\!171n^2\!+\!487n\!-\!444)/6$ & $(20n^3\!-\!231n^2\!+\!859n\!-\!1008)/6$\\
& & $3$ & $10n^2\!-\!67n\!+\!113$ & $(10n\!-\!37)(n\!-\!5)$\\
& & $4$ & $20n\!-\!77$ & $20n\!-\!97$\\
$\{1,2,4,5\}$& $5$ & $0$ & $(n(n\!-\!1)(2n^3\!-\!15n^2\!+\!41n\!-\!32))/12$ & $(n(n\!-\!4)(2n^3\!-\!19n^2\!+\!58n\!-\!53))/12$\\
& & $1$ & $((n\!-\!1)(5n^3\!-\!39n^2\!+\!106n\!-\!90))/6$ & $((n\!-\!3)(5n^3\!-\!49n^2\!+\!145n\!-\!110))/6$\\
& & $2$ & $(10n^3\!-\!81n^2\!+\!221n\!-\!195)/3$ & $(10n^3\!-\!111n^2\!+\!398n\!-\!453)/3$\\
& & $3$ & $2(5n^2\!-\!32n\!+\!52)$ & $10n^2\!-\!84n\!+\!173$\\
& & $4$ & $2(10n\!-\!37)$ & $2(10n\!-\!47)$\\
$\{1,2,5,6\}$& $5$ & $0$ & $(n(n\!-\!1)(11n^3\!-\!99n^2\!+\!296n\!-\!269))/30$ & $(n(11n^4\!-\!165n^3\!+\!890n^2\!-\!2025n\!+\!1619))/30$\\
& & $1$ & $((n\!-\!1)(11n^3\!-\!99n^2\!+\!292n\!-\!270))/6$ & $((n\!-\!3)(11n^3\!-\!121n^2\!+\!391n\!-\!314))/6$\\
& & $2$ & $(22n^3\!-\!198n^2\!+\!578n\!-\!537)/3$ & $(22n^3\!-\!264n^2\!+\!1007n\!-\!1203)/3$\\
& & $3$ & $2(11n^2\!-\!77n\!+\!133)$ & $22n^2\!-\!198n\!+\!431$\\
& & $4$ & $44(n\!-\!4)$ & $44(n\!-\!5)$\\
$\{1,3,4,5\}$& $5$ & $0$ & $(n(n\!-\!1)(4n^3\!-\!27n^2\!+\!59n\!-\!30))/24$ & $(n(n\!-\!5)(n\!-\!3)(4n^2\!-\!19n\!+\!18))/24$\\
& & $1$ & $((n\!-\!1)(5n^3\!-\!36n^2\!+\!85n\!-\!60))/6$ & $(5n^4\!-\!61n^3\!+\!259n^2\!-\!443n\!+\!246)/6$\\
& & $2$ & $(20n^3\!-\!153n^2\!+\!385n\!-\!312)/6$ & $(20n^3\!-\!213n^2\!+\!721n\!-\!768)/6$\\
& & $3$ & $(n\!-\!3)(10n\!-\!31)$ & $10n^2\!-\!81n\!+\!159$\\
& & $4$ & $20n\!-\!71$ & $20n\!-\!91$\\
$\{1,3,5,6\}$& $5$ & $0$ & $(n(n\!-\!1)(44n^3\!-\!381n^2\!+\!1069n\!-\!906))/120$ & $(n(n\!-\!3)(44n^3\!-\!513n^2\!+\!1831n\!-\!1902))/120$\\
& & $1$ & $((n\!-\!1)(11n^3\!-\!96n^2\!+\!271n\!-\!240))/6$ & $(11n^4\!-\!151n^3\!+\!721n^2\!-\!1385n\!+\!858)/6$\\
& & $2$ & $(44n^3\!-\!387n^2\!+\!1099n\!-\!996)/6$ & $((n\!-\!4)(44n^2\!-\!343n\!+\!567))/6$\\
& & $3$ & $(n\!-\!3)(22n\!-\!85)$ & $22n^2\!-\!195n\!+\!417$\\
& & $4$ & $44n\!-\!173$ & $44n\!-\!217$\\
$\{1,4,5,6\}$& $5$ & $0$ & $(n(n\!-\!1)(2n\!-\!3)(11n^2\!-\!75n\!+\!136))/60$ & $(n(n\!-\!3)(n\!-\!4)(22n^2\!-\!161n\!+\!219))/60$\\
& & $1$ & $((n\!-\!1)(11n^3\!-\!93n^2\!+\!256n\!-\!222))/6$ & $((n\!-\!4)(11n^3\!-\!104n^2\!+\!278n\!-\!201))/6$\\
& & $2$ & $(22n^3\!-\!189n^2\!+\!527n\!-\!471)/3$ & $(22n^3\!-\!255n^2\!+\!938n\!-\!1083)/3$\\
& & $3$ & $2(n\!-\!3)(11n\!-\!41)$ & $22n^2\!-\!192n\!+\!405$\\
& & $4$ & $2(22n\!-\!85)$ & $2(22n\!-\!107)$\\
$\{2,3,4,5\}$& $5$ & $0$ & $(n(n\!-\!1)(n\!-\!2)(4n^2\!-\!19n\!+\!29))/24$ & $(n(n\!-\!5)(n\!-\!2)(4n^2\!-\!23n\!+\!37))/24$\\
& & $1$ & $((n\!-\!1)(n\!-\!2)(5n^2\!-\!26n\!+\!39))/6$ & $(5n^4\!-\!61n^3\!+\!265n^2\!-\!479n\!+\!288)/6$\\
& & $2$ & $((n\!-\!2)(20n^2\!-\!113n\!+\!171))/6$ & $(20n^3\!-\!213n^2\!+\!733n\!-\!810)/6$\\
& & $3$ & $10n^2\!-\!61n\!+\!95$ & $(2n\!-\!7)(5n\!-\!23)$\\
& & $4$ & $20n\!-\!71$ & $20n\!-\!91$\\
$\{2,4,5,6\}$& $5$ & $0$ & $(n(n\!-\!1)(n\!-\!2)(22n^2\!-\!139n\!+\!239))/60$ & $(n(n\!-\!2)(22n^3\!-\!271n^2\!+\!1088n\!-\!1439))/60$\\
& & $1$ & $((n\!-\!1)(n\!-\!2)(11n^2\!-\!71n\!+\!120))/6$ & $(11n^4\!-\!148n^3\!+\!700n^2\!-\!1349n\!+\!846)/6$\\
& & $2$ & $((n\!-\!2)(22n^2\!-\!145n\!+\!243))/3$ & $((n\!-\!4)(22n^2\!-\!167n\!+\!276))/3$\\
& & $3$ & $2(11n^2\!-\!74n\!+\!124)$ & $22n^2\!-\!192n\!+\!407$\\
& & $4$ & $2(22n\!-\!85)$ & $2(22n\!-\!107)$\\
$\{3,4,5,6\}$& $5$ & $0$ & $(n(n\!-\!1)(n\!-\!2)(n\!-\!3)(44n\!-\!131))/120$ & $(n(n\!-\!2)(n\!-\!3)(44n^2\!-\!395n\!+\!831))/120$\\
& & $1$ & $((11n\!-\!35)(n\!-\!1)(n\!-\!2)(n\!-\!3))/6$ & $((n\!-\!3)(11n^3\!-\!112n^2\!+\!331n\!-\!254))/6$\\
& & $2$ & $((n\!-\!3)(44n\!-\!149)(n\!-\!2))/6$ & $(44n^3\!-\!501n^2\!+\!1813n\!-\!2070)/6$\\
& & $3$ & $(n\!-\!3)(22n\!-\!79)$ & $22n^2\!-\!189n\!+\!393$\\
& & $4$ & $44n\!-\!167$ & $44n\!-\!211$\\
$\{1,2,3,4,5\}$& $5$ & $0$ & $(n(n\!-\!1)(4n^3\!-\!27n^2\!+\!67n\!-\!46))/24$ & $(n(4n^4\!-\!51n^3\!+\!238n^2\!-\!477n\!+\!334))/24$\\
& & $1$ & $((n\!-\!1)(5n^3\!-\!36n^2\!+\!91n\!-\!72))/6$ & $((n\!-\!3)(n\!-\!4)(5n^2\!-\!26n\!+\!23))/6$\\
& & $2$ & $(20n^3\!-\!153n^2\!+\!397n\!-\!336)/6$ & $((n\!-\!4)(20n^2\!-\!133n\!+\!201))/6$\\
& & $3$ & $10n^2\!-\!61n\!+\!95$ & $(2n\!-\!7)(5n\!-\!23)$\\
& & $4$ & $20n\!-\!71$ & $20n\!-\!91$\\
$\{1,2,3,5,6\}$& $5$ & $0$ & $(n(n\!-\!1)(44n^3\!-\!381n^2\!+\!1109n\!-\!986))/120$ & $(n(44n^4\!-\!645n^3\!+\!3410n^2\!-\!7635n\!+\!6026))/120$\\
& & $1$ & $((n\!-\!1)(11n^3\!-\!96n^2\!+\!277n\!-\!252))/6$ & $((n\!-\!3)(11n^3\!-\!118n^2\!+\!373n\!-\!296))/6$\\
& & $2$ & $(44n^3\!-\!387n^2\!+\!1111n\!-\!1020)/6$ & $(44n^3\!-\!519n^2\!+\!1951n\!-\!2304)/6$\\
& & $3$ & $22n^2\!-\!151n\!+\!257$ & $22n^2\!-\!195n\!+\!419$\\
& & $4$ & $44n\!-\!173$ & $44n\!-\!217$\\
$\{1,2,4,5,6\}$& $5$ & $0$ & $(n(n\!-\!1)(22n^3\!-\!183n^2\!+\!517n\!-\!448))/60$ & $(n(n\!-\!4)(22n^3\!-\!227n^2\!+\!722n\!-\!697))/60$\\
& & $1$ & $((n\!-\!1)(11n^3\!-\!93n^2\!+\!262n\!-\!234))/6$ & $((n\!-\!3)(11n^3\!-\!115n^2\!+\!355n\!-\!278))/6$\\
& & $2$ & $(22n^3\!-\!189n^2\!+\!533n\!-\!483)/3$ & $(22n^3\!-\!255n^2\!+\!944n\!-\!1101)/3$\\
& & $3$ & $2(11n^2\!-\!74n\!+\!124)$ & $22n^2\!-\!192n\!+\!407$\\
& & $4$ & $2(22n\!-\!85)$ & $2(22n\!-\!107)$\\
$\{1,3,4,5,6\}$& $5$ & $0$ & $(n(n\!-\!1)(44n^3\!-\!351n^2\!+\!919n\!-\!726))/120$ & $(n(n\!-\!3)(44n^3\!-\!483n^2\!+\!1621n\!-\!1602))/120$\\
& & $1$ & $((n\!-\!1)(11n^3\!-\!90n^2\!+\!241n\!-\!204))/6$ & $(11n^4\!-\!145n^3\!+\!667n^2\!-\!1241n\!+\!750)/6$\\
& & $2$ & $(44n^3\!-\!369n^2\!+\!1009n\!-\!888)/6$ & $(44n^3\!-\!501n^2\!+\!1813n\!-\!2064)/6$\\
& & $3$ & $(n\!-\!3)(22n\!-\!79)$ & $22n^2\!-\!189n\!+\!393$\\
& & $4$ & $44n\!-\!167$ & $44n\!-\!211$\\
$\{2,3,4,5,6\}$& $5$ & $0$ & $(n(n\!-\!1)(n\!-\!2)(44n^2\!-\!263n\!+\!433))/120$ & $(n(n\!-\!2)(44n^3\!-\!527n^2\!+\!2056n\!-\!2653))/120$\\
& & $1$ & $((n\!-\!1)(n\!-\!2)(11n^2\!-\!68n\!+\!111))/6$ & $(11n^4\!-\!145n^3\!+\!673n^2\!-\!1277n\!+\!792)/6$\\
& & $2$ & $((n\!-\!2)(44n^2\!-\!281n\!+\!459))/6$ & $(44n^3\!-\!501n^2\!+\!1825n\!-\!2106)/6$\\
& & $3$ & $22n^2\!-\!145n\!+\!239$ & $(22n\!-\!79)(n\!-\!5)$\\
& & $4$ & $44n\!-\!167$ & $44n\!-\!211$\\
$\{1,2,3,4,5,6\}$& $5$ & $0$ & $(n(n\!-\!1)(44n^3\!-\!351n^2\!+\!959n\!-\!806))/120$ & $(n(44n^4\!-\!615n^3\!+\!3110n^2\!-\!6705n\!+\!5126))/120$\\
& & $1$ & $((n\!-\!1)(11n^3\!-\!90n^2\!+\!247n\!-\!216))/6$ & $((11n\!-\!13)(n\!-\!3)(n\!-\!4)(n\!-\!5))/6$\\
& & $2$ & $(44n^3\!-\!369n^2\!+\!1021n\!-\!912)/6$ & $((44n\!-\!105)(n\!-\!4)(n\!-\!5))/6$\\
& & $3$ & $22n^2\!-\!145n\!+\!239$ & $(22n\!-\!79)(n\!-\!5)$\\
& & $4$ & $44n\!-\!167$ & $44n\!-\!211$\\
\end{longtable}
\end{center}

 Note that the method   in Theorem \ref{symmetric-thm} is invalid for those $T=\cup_{i\in \mathcal{I}_T}\mathcal{C}^{(i)}$ with 
 \begin{equation}\label{bad}
 \mathcal{I}_T\in\left\{
 \begin{array}{l}
 \{1,3\},\{1,6\},\{4,6\},\{1,2,3\}, \{1,2,6\},\{1,3,6\},\{1,4,6\},\\
 \{2,4,6\},\{3,4,6\},\{1,2,3,6\},\{1,2,4,6\}, \{1,3,4,6\}, \\
 \{2,3,4,6\}, \{2,3,5,6\},\{1,2,3,4,6\}
 \end{array}
 \right\}.
 \end{equation}
 Thus we have the following problem:

\begin{prob}\label{prob-1}
For $T=\cup_{i\in \mathcal{I}_T}\mathcal{C}^{(i)}$ with $\mathcal{I}_T$  shown in  (\ref{bad}), what is the  second eigenvalue of the normal Cayley graph $G=\mathrm{Cay}(S_n,T)$?
\end{prob}

\begin{remark}\label{disconnected-rem}
It is worth mentioning that for small $m$ (for example, $m=6$ or $7$), as in Theorem \ref{symmetric-thm}, one can also  determine the second eigenvalues of some connected normal Cayley graphs (and some subgraphs of these graphs) of $S_n$ as long as the computer can verify the conditions of Theorem \ref{normal-thm}.
\end{remark}

\begin{remark}\label{symmetric-rem}
It is well known that the alternating group $A_n$ ($n\geq 3$) acts $(n-2)$-transitively on $[n]$. Thus the method used in Theoerm \ref{symmetric-thm} is still valid for determining the second eigenvalues of those connected normal Cayley graphs (and some subgraphs of these graphs) of $A_n$ when $m$ is  relatively small.
\end{remark}

Let $T=\mathcal{C}^{(1)}$ (see (\ref{eq-21})) be the set of all transpositions in $S_n$ ($n\geq 3$). Then $m=2$ and $T_1=T_{m-1}=\mathcal{C}_1^{(1)}=\{(1,q)\mid 2\leq q\leq n\}$. If $n\geq 7$, by Theorem \ref{symmetric-thm} (see also Table \ref{tab-2}), the spectral gap of $G=\mathrm{Cay}(S_n,T)$ and $G_1=\mathrm{Cay}(S_n,T_1)$ are  $|T|-|T\cap  (S_n) _1|+|T\cap  (S_n) _{2,1}|=\frac{1}{2}n(n-1)-\frac{1}{2}(n-1)(n-2)+1=n$ and $|T_1|-|T_1\cap  (S_n) _2|+|T_1\cap  (S_n) _{3,2}|=n-1-(n-2)+0=1$, respectively. If $3\leq n\leq 6$, one can easily verify that the result also holds. Thus, the two results below are consequences of our work.

\begin{cor}[Diaconis and Shahshahani \cite{Diaconis}]
For $n\geq 3$, the  spectral gap of $\mathrm{Cay}(S_n,\{(p,q)\mid 1\leq p,q\leq n\})$ is $n$.
\end{cor}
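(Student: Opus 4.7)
The plan is to apply Theorem \ref{symmetric-thm} directly with $T=\mathcal{C}^{(1)}$, which is the set of all transpositions in $S_n$. Here $m=\max_{\tau\in T}|\mathrm{supp}(\tau)|=2$ and $\mathcal{I}_T=\{1\}$, which is not among the fifteen excluded types listed in (\ref{bad}). So for $n\ge 7$, Theorem \ref{symmetric-thm} applies with $k=0$ and gives
\begin{equation*}
\lambda_2(G)=|T\cap (S_n)_1|-|T\cap (S_n)_{2,1}|.
\end{equation*}

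The next step is to evaluate these two quantities combinatorially. The elements of $T$ fixing $1$ are precisely the transpositions on $\{2,\dots,n\}$, so $|T\cap (S_n)_1|=\binom{n-1}{2}=\tfrac{1}{2}(n-1)(n-2)$. The elements of $T$ sending $2$ to $1$ are just the single transposition $(1,2)$, so $|T\cap (S_n)_{2,1}|=1$. Since $G$ is $|T|$-regular with $|T|=\binom{n}{2}=\tfrac{1}{2}n(n-1)$, the spectral gap becomes
\begin{equation*}
\lambda_1(G)-\lambda_2(G)=\tfrac{1}{2}n(n-1)-\tfrac{1}{2}(n-1)(n-2)+1=(n-1)+1=n,
\end{equation*}
which establishes the result for all $n\ge 7$.

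The remaining work is to handle the small cases $3\le n\le 6$, for which Theorem \ref{symmetric-thm} was not stated. These cases admit a direct verification: one can either compute the spectrum of $\mathrm{Cay}(S_n,T)$ by hand (or by computer) using Theorem \ref{normal-thm-0} together with the character table of $S_n$, or alternatively repeat the argument of Theorem \ref{normal-thm} with $a=n-m=n-2$ and check the induction base $\lambda_2(G_{k,a-1})=\lambda_2(B_\Pi^{(k,a-1)})$ for the two graphs $G_{0,a-1},G_{1,a-1}$ (which live on the very small stabilizer subgroups $S_n^{(n-3)}\cong S_3$ and its subgroups). Either route shows that the formula $\lambda_1(G)-\lambda_2(G)=n$ continues to hold.

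No step here should be a genuine obstacle: the heavy lifting has already been done in Theorem \ref{symmetric-thm}, and the only mild nuisance is verifying the small values of $n$. The main point of the corollary is therefore simply to recognize the specialization $\mathcal{I}_T=\{1\}$ and to read off the two intersection numbers $|T\cap(S_n)_1|$ and $|T\cap(S_n)_{2,1}|$ from the definition of $T$.
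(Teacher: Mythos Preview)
Your proposal is correct and follows essentially the same route as the paper: apply Theorem \ref{symmetric-thm} with $\mathcal{I}_T=\{1\}$ for $n\ge 7$, compute $|T|=\binom{n}{2}$, $|T\cap(S_n)_1|=\binom{n-1}{2}$, $|T\cap(S_n)_{2,1}|=1$ to obtain spectral gap $n$, and then dispose of $3\le n\le 6$ by direct verification. The only minor inaccuracy is the parenthetical remark that $G_{0,a-1}$ and $G_{1,a-1}$ live ``on $S_n^{(n-3)}\cong S_3$ and its subgroups''; in fact both are Cayley graphs on $\Gamma^{(a-1)}\cong S_3$ itself (with connection sets $T_0\cap\Gamma^{(a-1)}$ and $T_1\cap\Gamma^{(a-1)}$), but this does not affect the argument.
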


\begin{cor}[Flatto, Odlyzko and Wales \cite{Flatto}]
For $n\geq 3$, the  spectral gap of $\mathrm{Cay}(S_n,\{(1,q)\mid 2\leq q\leq n\})$ is $1$.
\end{cor}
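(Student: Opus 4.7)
The plan is to recognize $\mathrm{Cay}(S_n,\{(1,q)\mid 2\leq q\leq n\})$ as a specific instance of the subgraph $G_1$ appearing in Theorem \ref{symmetric-thm}, and then to read off its second eigenvalue directly from the formula there. Start from the normal Cayley graph $G_0=\mathrm{Cay}(S_n,T)$ with $T=\mathcal{C}^{(1)}$, the conjugacy class of all transpositions. Then $T$ corresponds to $\mathcal{I}_T=\{1\}$, which is not in the list of exceptional cases of Theorem \ref{symmetric-thm}, and we have $m=\max_{\tau\in T}|\mathrm{supp}(\tau)|=2$. By the construction in (\ref{eq-13}) and Claim \ref{claim-2}, $T_1$ consists of those transpositions moving $1$, i.e.\ $T_1=\{(1,q)\mid 2\leq q\leq n\}$, so $G_1=\mathrm{Cay}(S_n,T_1)$ is exactly the graph in the corollary.

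For $n\geq 7$, Theorem \ref{symmetric-thm} applies with $k=1$ and gives
\[
\lambda_2(G_1)=|T_1\cap (S_n)_2|-|T_1\cap (S_n)_{3,2}|.
\]
I would then evaluate both terms combinatorially: an element $(1,q)\in T_1$ fixes $2$ iff $q\in\{3,\ldots,n\}$, contributing $n-2$, and no element $(1,q)$ sends $3$ to $2$, contributing $0$. Hence $\lambda_2(G_1)=n-2$. Since $G_1$ is $(n-1)$-regular (every permutation has exactly $n-1$ neighbors), the spectral gap is $(n-1)-(n-2)=1$.

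For the small cases $3\leq n\leq 6$, the theorem is not directly quoted, but the same argument still works because $S_n$ acts $n$-transitively on $[n]$ and thus Theorem \ref{normal-thm} applies with $a=n-m=n-2\geq 1$; the base case verifications reduce to checking the formula on $\mathrm{Cay}(S_{n},T_k\cap S_{n})$ for $k\in\{0,1\}$, which can be done by direct computation on groups of order at most $720$. Alternatively, these four small cases can be disposed of by simply diagonalizing the adjacency matrix (or by quoting the classical result that the spectrum of this star-transposition graph is $\{n-1-2j: 0\leq j\leq n-1\}$ with appropriate multiplicities).

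The main conceptual step is the translation: the graph of the corollary is not itself a normal Cayley graph (its connection set is not closed under conjugation in $S_n$), so one must pass through the embedding $G_1\subset G_0$ into a normal graph in order to apply Theorem \ref{symmetric-thm}. Once that viewpoint is adopted, the computation of the two cardinalities is immediate, and there is no serious obstacle.
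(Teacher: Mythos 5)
Your proposal is correct and follows essentially the same route as the paper: embed the graph as $G_1$ inside the normal Cayley graph $\mathrm{Cay}(S_n,\mathcal{C}^{(1)})$, apply Theorem \ref{symmetric-thm} with $k=1$ to get $\lambda_2(G_1)=|T_1\cap (S_n)_2|-|T_1\cap (S_n)_{3,2}|=n-2$ for $n\geq 7$, and handle $3\leq n\leq 6$ by direct verification. One immaterial slip: your parenthetical claim that the spectrum of this star-transposition graph is $\{n-1-2j:0\leq j\leq n-1\}$ is false (for $n=3$ the graph is a $6$-cycle with spectrum $\{\pm 1,\pm 2\}$, not $\{0,\pm 2\}$), but this does not affect the argument since the small cases are in any event disposed of by direct computation, exactly as in the paper.
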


\section{Further research}\label{s-5}
Let $\Gamma$ be finite group acts transitively on $[n]$ (for example, $\Gamma=S_n$ or $A_n$), and let $\mathrm{Cay}(\Gamma,T)$ be a Cayley graph of $\Gamma$. By Theorem \ref{Cay-thm}, the left coset decomposition  given in (\ref{eq-4})  is always an equitable partition of $\mathrm{Cay}(\Gamma,T)$, and the corresponding  quotient matrix $B_{\Pi}=(b_{s,t})_{n\times n}$ (see (\ref{eq-6})) is symmetric, where $b_{s,t}$ (=$b_{t,s}$) is the number of elements in $T$ moving $t$ to $s$. Since the eigenvalues of $B_{\Pi}$ are also eigenvalues of $\mathrm{Cay}(\Gamma,T)$, we have $\lambda_2(B_\Pi)\leq \lambda_2(\mathrm{Cay}(\Gamma,T))$. Inspired by the main result of Section \ref{s-4}, we pose the following problem.

\begin{prob}\label{prob-quotient}
Let $\Gamma$ be finite group acts transitively on $[n]$. For which connected  Cayley graphs of $\Gamma$, the equality $\lambda_2(B_\Pi)=\lambda_2(\mathrm{Cay}(\Gamma,T))$ holds?
\end{prob}

Let $T$ be a symmetric generating subset of $\Gamma$. We define the \emph{permutation graph} $\mathrm{Per}(T)$ as the edge-weighted graph with vertex set $\{1,2,\ldots,n\}$ in which each edge $e=st$ ($s\neq t$) has weight $w(e)=b_{s,t}$,  the  number of elements in $T$ moving $t$ to $s$ as mentioned above. If $\Gamma=S_n$ and $T$ contains only transpositions, it is clear that the permutation graph $\mathrm{Per}(T)$ coincides with the transposition graph $\mathrm{Tra}(T)$ defined in Section \ref{s-1}.  Since $\mathrm{Cay}(\Gamma,T)$ is $|T|$-regular,  the sum of each row of the quotient matrix $B_\Pi$ is equal to $|T|$. We can verify that $B_\Pi=|T|\cdot I_n-L(\mathrm{Per}(T))$, where  $L(\mathrm{Per}(T))$ is the Laplacian matrix of the permutation graph $\mathrm{Per}(T)$. This implies that $\lambda_2(B_\Pi)=|T|\cdot I_n-\mu_{n-1}(L(\mathrm{Per}(T)))$, where $\mu_{n-1}(L(\mathrm{Per}(T)))$ denotes the second least eigenvalue of $L(\mathrm{Per}(T))$, i.e., the algebraic connectivity of $\mathrm{Per}(T)$. Therefore, the spectral gap of $\mathrm{Cay}(\Gamma,T)$ satisfies the inequality 
$$|T|-\lambda_2(\mathrm{Cay}(\Gamma,T))\leq |T|-\lambda_2(B_\Pi)=\mu_{n-1}(L(\mathrm{Per}(T))).$$
Then we can  restate Problem \ref{prob-quotient} as below.

\begin{prob}\label{prob-spectral-gap}
Let $\Gamma$ be finite group acts transitively on $[n]$. For which connected  Cayley graphs of $\Gamma$, the spectral gap of $\mathrm{Cay}(\Gamma,T)$ equals to the algebraic connectivity of the permutation graph $\mathrm{Per}(T)$?
\end{prob}

In fact,  Aldous' theorem give a positive answer of Problem  \ref{prob-quotient} (or Problem \ref{prob-spectral-gap})  in the case that $\Gamma=S_n$ and $T$ consists of transpositions. Also,  the result of Theorem \ref{symmetric-thm} in this paper gives a  partial answer of Problem  \ref{prob-quotient} (or Problem \ref{prob-spectral-gap}) for  the connected normal Cayley graphs (and some of their subgraphs) of $S_n$  with  $\max_{\tau\in T}|\mathrm{supp}(\tau)|\leq 5$.

For any $\sigma\in S_n$,  there exists a unique partition $[n] = I_1\cup \cdots \cup I_m$ of $[n]$ into contiguous blocks such that $\sigma(I_i) = I_i$ for each $i\in [m]$. Here, each $I_i$ consists of consecutive elements in $[n]$, so that $I_i=\{a,a + 1,\ldots,b\}$ for some pair of natural numbers $a \leq b$. If this partition is of cardinality $m$, then we call $\sigma$ an $m$-\emph{reducible permutation}. 
In \cite{Dai1,Dai2}, Dai introduced and discussed some combinatorial properties of a new variant of the family of Johnson graphs, the Full-Flag Johnson graphs. He showed that the Full-Flag Johnson graph $FJ(n,r)$ ($r<n$) is isomorphic to the Cayley graph $\mathrm{Cay}(S_n,RP^{(r)})$, where $RP^{(r)}$ is the set of all $(n-r)$-reducible permutations of $S_n$. For a positive integer $n$,  the Cayley graph $\mathrm{Cay}(S_n,\{(i,i+1)\mid 1\leq i\leq n-1\})$ is called the \emph{permutahedron} of order $n$, which is a well-known combinatorial graph. Observe that each $(n-1)$-reducible permutation of $S_n$ must be of the form $(i,i+1)$ for some $i\in[n-1]$, we have $RP^{(1)}=\{(i,i+1)\mid 1\leq i\leq n-1\}$, and so the permutahedron of order $n$ is just the Full-Flag Johnson graph $FJ(n,1)$. Thus the Full-Flag Johnson graphs can be also viewed as the generalizations of permutahedra \cite{Dai2}.

Let  $M_n$ be  the tridiagonal matrix of order $n$ defined as below:
$$
M_n=\left[\begin{smallmatrix}
n-2 & 1& 0& 0&\cdots &0& 0& 0& 0\\
1& n-3&1&0&\cdots&0&0& 0& 0\\
0 &1 &n-3 &1& \cdots &0& 0& 0& 0\\
&&&&\vdots&&&&\\
0& 0& 0& 0&\cdots&1&n-3& 1& 0&\\
0& 0& 0&0&\cdots &0&1&n-3&1\\
0& 0& 0&0&\cdots&0&0 &1&n-2
\end{smallmatrix}
\right].
$$
At the end of the paper \cite{Dai2}, Dai  proved that the eigenvalues of $M_n$ are also eigenvalues of the permutahedron $FJ(n,1)$, and conjectured that $\lambda_2(M_n)=\lambda_2(FJ(n,1))$. In fact, since $FJ(n,1)=\mathrm{Cay}(S_n,RP^{(1)})$ with $RP^{(1)}=\{(i,i+1)\mid 1\leq i\leq n-1\}$,  $M_n$ is just  the quotient matrix of $FJ(n,1)$ shown in (\ref{eq-6}). Thus we may conclude that  Dai's conjecture follows from Aldous' theorem immediately by the arguments at the beginning of this section. 

Now consider the graph $FJ(n,2)=\mathrm{Cay}(S_n,RP^{(2)})$ where $RP^{(2)}$ consists of all $(n-2)$-reducible permutations of $S_n$. By definition, we can check that each $(n-2)$-reducible permutation of $S_n$ belongs to one of the following three classes:
\begin{equation*}
\left\{
\begin{aligned}
Q^{(1)}&=\{(i,i+1,i+2),(i,i+2,i+1)\mid 1\leq i\leq n-2\};\\
Q^{(2)}&=\{(i,i+2)\mid 1\leq i\leq n-2\};\\
Q^{(3)}&=\{(i,i+1)(j,j+1)\mid 1\leq i\leq n-3,3\leq j\leq n-1,i< j-1\}.
\end{aligned}
\right.
\end{equation*}
Therefore, we have $RP^{(2)}=Q^{(1)}\cup Q^{(2)}\cup Q^{(3)}$. Furthermore, by Theorem \ref{Cay-thm} and (\ref{eq-6}), the graph $FJ(n,2)=\mathrm{Cay}(S_n,RP^{(2)})$ has the quotient matrix
$$
B_n=\left[
\begin{smallmatrix}
\frac{n^2-n-6}{2} & n-2 & 2 & 0 &  0 &0 & \cdots & 0 & 0 & 0 &0 &0 & 0 \\
n-2 & \frac{n^2-3n-2}{2} & n-2 & 2 & 0 &0 &  \cdots & 0&0 &0 &0 & 0&0 \\
2 & n-2 & \frac{n^2-3n-6}{2}& n-2 & 2 &0 &  \cdots& 0&0 &0 &0 &0 &0 \\
0 & 2 & n-2 & \frac{n^2-3n-6}{2} & n-2 & 2 &  \cdots &0 &0 &0 &0 &0 &0 \\
&&&&&& \vdots&&&&&&\\
0&0&0&0&0&0& \cdots&2 & n-2 & \frac{n^2-3n-6}{2} & n-2 & 2&0\\
0&0&0&0&0&0& \cdots&0&2 & n-2 & \frac{n^2-3n-6}{2} & n-2 & 2\\
0&0&0&0&0&0& \cdots&0&0&2 & n-2 & \frac{n^2-3n-2}{2} & n-2\\
0&0&0&0&0&0& \cdots&0&0&0&2 & n-2 & \frac{n^2-n-6}{2}\\
\end{smallmatrix}
\right]_{n\times n}.
$$
In accordance with Problem \ref{prob-quotient},  we ask if $\lambda_2(FJ(n,2))=\lambda_2(B_n)$?  Using computer, we  can verify that the equality holds for $4\leq n\leq 7$ and we make the following conjecture.
\begin{conj}\label{conj-1}
For $n\geq 4$,  $\lambda_2(FJ(n,2))=\lambda_2(B_n)$.
\end{conj}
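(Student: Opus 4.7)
I propose to proceed by induction on $n$, with the base cases $n\in\{4,5,6,7\}$ already verified by direct computation. Fix $n\geq 8$ and assume $\lambda_2(FJ(m,2))=\lambda_2(B_m)$ for all $4\leq m\leq n-1$. Since $B_n$ arises from an equitable partition of $FJ(n,2)$ (Theorem~\ref{Cay-thm}), Lemma~\ref{quotient} gives $\lambda_2(FJ(n,2))\geq \lambda_2(B_n)$, so only the reverse inequality remains. Applying Theorem~\ref{Cay-thm} to $FJ(n,2)=\mathrm{Cay}(S_n,RP^{(2)})$ with $k=1$, any eigenvalue $\lambda$ of $FJ(n,2)$ not already an eigenvalue of $B_n$ satisfies
$$
\lambda\leq \lambda_2(\mathrm{Cay}((S_n)_1,R))+\lambda_2(\mathrm{Cay}(S_n,T')),
$$
where $R=RP^{(2)}\cap (S_n)_1$ and $T'=RP^{(2)}\setminus R$.

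The first term is controlled by a self-similarity observation. An $(n-2)$-reducible permutation of $[n]$ fixing $1$ must have $\{1\}$ as a singleton block in its unique finest invariant contiguous partition, so its remaining blocks form an $(n-3)$-reducible partition of $\{2,\ldots,n\}$. The relabeling $i\mapsto i-1$ therefore induces an isomorphism $\mathrm{Cay}((S_n)_1,R)\cong FJ(n-1,2)$, and by the induction hypothesis $\lambda_2(\mathrm{Cay}((S_n)_1,R))=\lambda_2(B_{n-1})$.

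For the second term, set $J_n:=\mathrm{Cay}(S_n,T')$, where
$$
T'=\{(1,2,3),\,(1,3,2),\,(1,3)\}\cup\{(1,2)(j,j+1):3\leq j\leq n-1\}.
$$
Write $T'_a:=\{(1,2,3),(1,3,2),(1,3)\}$ and $T'_b:=\{(1,2)(j,j+1):3\leq j\leq n-1\}$ and apply Weyl's inequality $\lambda_2(A+B)\leq \lambda_1(A)+\lambda_2(B)$ to the edge decomposition $A(J_n)=A(\mathrm{Cay}(S_n,T'_a))+A(\mathrm{Cay}(S_n,T'_b))$. The graph $\mathrm{Cay}(S_n,T'_a)$ is a disjoint union of $n!/6$ copies of the $3$-regular graph on $6$ vertices $\mathrm{Cay}(S_3,T'_a)$ (isomorphic to the triangular prism $K_3\square K_2$), giving $\lambda_1(\mathrm{Cay}(S_n,T'_a))=3$. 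The subgroup $\langle T'_b\rangle$ equals $\langle(1,2)\rangle\times S_{\{3,\ldots,n\}}$, so $\mathrm{Cay}(S_n,T'_b)$ is a disjoint union of $n(n-1)/2$ copies of a bipartite double cover of the permutahedron $FJ(n-2,1)$; in particular, the eigenvalue $n-3$ occurs with multiplicity greater than one, so $\lambda_2(\mathrm{Cay}(S_n,T'_b))=n-3$. Combining yields $\lambda_2(J_n)\leq n$.

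It remains to verify the numerical inequality
$$
\lambda_2(B_{n-1})+n\leq \lambda_2(B_n).
$$
Since $\lambda_1(B_n)=|RP^{(2)}|=(n^2+n-6)/2$ satisfies $\lambda_1(B_n)-\lambda_1(B_{n-1})=n$, this reduces to showing that the spectral gap $\lambda_1(B_n)-\lambda_2(B_n)$ remains $O(1)$ as $n$ grows (and similarly for $B_{n-1}$). Because $B_n$ is a symmetric banded matrix of bandwidth $2$ with the reflection symmetry $i\leftrightarrow n+1-i$, I would attack this step via a Cauchy-interlacing argument comparing $B_{n-1}$ to $B_n$ or via a careful analysis of the characteristic polynomial using the banded three-term recursion. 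The main obstacle of the entire plan lies here: both sides of the target inequality lack closed-form spectra, and the bound $\lambda_2(J_n)\leq n$ leaves essentially no slack against the gain $\lambda_1(B_n)-\lambda_1(B_{n-1})=n$, so any looseness in the subsidiary estimates would defeat the induction, and it may be necessary either to sharpen the Weyl bound on $\lambda_2(J_n)$ (for example by recursively applying Theorem~\ref{Cay-thm} to $J_n$, using that $T'\cap(S_n)_2=\{(1,3)\}$) or to extend the computational base-case verification past $n=7$.
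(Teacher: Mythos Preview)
This statement is presented in the paper as an open \emph{conjecture}; the paper does not prove it. What the paper offers is a sketch of an inductive strategy whose skeleton matches yours exactly: assume the result for $n-1$, apply Theorem~\ref{Cay-thm} with $k=1$, use $\mathrm{Cay}((S_n)_1,RP^{(2)}\cap(S_n)_1)\cong FJ(n-1,2)$, and then try to control $\lambda_2(J_n)$ for $J_n=FJ_1(n,2)=\mathrm{Cay}(S_n,T')$. The two proposals diverge only in how $\lambda_2(J_n)$ is handled. Instead of your Weyl split $T'=T'_a\cup T'_b$, the paper posits a second conjecture that $\lambda_2(J_n)=\lambda_2(B_n^{(1)})$, where $B_n^{(1)}$ is the quotient matrix of $J_n$, and then applies Theorem~\ref{Cay-thm} once more to $J_n$ using the stabilizer $(S_n)_n$ (not $(S_n)_2$ as in your final remark), thereby reducing everything to the two matrix inequalities $\lambda_2(B_n)\geq\lambda_2(B_{n-1})+\lambda_2(B_n^{(1)})$ and $\lambda_2(B_n^{(1)})\geq\lambda_2(B_{n-1}^{(1)})+1$. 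The paper then explicitly leaves those inequalities open.

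So your proposal and the paper's discussion arrive at the same unresolved gap: a comparison of second eigenvalues of explicit banded $n\times n$ matrices. Your Weyl bound $\lambda_2(J_n)\leq n$ is more concrete than the paper's conjectural $\lambda_2(J_n)=\lambda_2(B_n^{(1)})$, but, as you already note, it leaves no slack against $\lambda_1(B_n)-\lambda_1(B_{n-1})=n$; the inequality $\lambda_2(B_n)-\lambda_2(B_{n-1})\geq n$ you would then need is precisely the statement that the spectral gap of $B_n$ is non-increasing in $n$, which is no easier than the paper's inequality~(\ref{eq-26}).

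One correction to your analysis of $T'_b$: the subgroup $\langle T'_b\rangle$ is \emph{not} $\langle(1,2)\rangle\times S_{\{3,\ldots,n\}}$. Products of an even number of the generators $(1,2)(j,j+1)$ cancel the $(1,2)$ factors and yield only even permutations of $\{3,\ldots,n\}$, while products of an odd number give $(1,2)$ times an odd permutation of $\{3,\ldots,n\}$. Hence $\langle T'_b\rangle$ has order $(n-2)!$ and is isomorphic to $S_{n-2}$ via the map that forgets the $\{1,2\}$ coordinate; under that isomorphism each generator $(1,2)(j,j+1)$ becomes the adjacent transposition $(j,j+1)$. Thus $\mathrm{Cay}(S_n,T'_b)$ is a disjoint union of $n(n-1)$ copies of the permutahedron $FJ(n-2,1)$ itself, not $n(n-1)/2$ copies of a bipartite double cover. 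Your conclusion $\lambda_2(\mathrm{Cay}(S_n,T'_b))=n-3$ nevertheless survives, since the graph is disconnected with $(n-3)$-regular components.
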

Theorem \ref{Cay-thm} indicates a possible method to prove Conjecture \ref{conj-1}. Now we describe the detail of the method. For $k=1,2$, we define $$FJ_k(n,2)=\mathrm{Cay}(S_n,RP_k^{(2)}),$$ where $RP_1^{(2)}=\{(1,2,3),(1,3,2),(1,3),$ $(1,2)(3,4),(1,2)(4,5),\ldots,(1,2)(n-1,n)\}$ and $RP_2^{(2)}=\{(1,2)(n-1,n)\}$. Note that $RP_1^{(2)}$ is the set of elements in $RP^{(2)}=Q^{(1)}\cup Q^{(2)}\cup Q^{(3)}$ moving $1$ while $RP_2^{(2)}$ is the set of elements in $RP_1^{(2)}$ moving $n$. Clearly, $FJ_1(n,2)$ is connected and   $FJ_2(n,2)$ is just the disjoint union of $\frac{n!}{2}$ $K_2$'s. Again by Theorem \ref{Cay-thm}, the graph $FJ_1(n,2)$ has the quotient matrix 
$$
B_n^{(1)}=\left[
\begin{smallmatrix}
0 & n-2 & 2 & 0 &  0 &\cdots & 0 & 0 & 0 &0 \\
n-2 & 1 & 1 & 0 & 0 &\cdots  & 0&0 &0 &0  \\
2 & 1 & n-4 & 1 & 0 &\cdots & 0&0 &0 &0  \\
0 & 0 & 1 & n-2 & 1 & \cdots  &0 &0 &0 &0  \\
&&&&& \vdots&&&&\\
0&0&0&0&0& \cdots&1 & n-2 & 1 & 0 \\
0&0&0&0&0& \cdots&0&1 & n-2 & 1 \\
0&0&0&0&0& \cdots&0&0&1 & n-1 \\
\end{smallmatrix}
\right]_{n\times n}.
$$
Using computer, we can check that $\lambda_2(FJ_1(n,2))=\lambda_2(B_n^{(1)})$ holds for $4\leq n\leq 7$, and so we propose the following conjecture.
\begin{conj}\label{conj-2}
For $n\geq 4$,  $\lambda_2(FJ_1(n,2))=\lambda_2(B_n^{(1)})$.
\end{conj}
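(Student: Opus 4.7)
The plan is to prove Conjecture \ref{conj-2} by induction on $n\geq 4$. The base cases $4\leq n\leq 7$ are already handled by the authors' computer verification, and for the inductive step I assume $\lambda_2(FJ_1(n-1,2))=\lambda_2(B_{n-1}^{(1)})$ and aim to deduce $\lambda_2(FJ_1(n,2))=\lambda_2(B_n^{(1)})$.

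I apply Theorem \ref{Cay-thm} to $G=FJ_1(n,2)=\mathrm{Cay}(S_n,RP_1^{(2)})$ with the choice $k=n$. The two ingredients the theorem requires are identified by direct inspection of $RP_1^{(2)}$: (a) the stabilizer part $RP_1^{(2)}\cap (S_n)_n=\{(1,2,3),(1,3,2),(1,3),(1,2)(3,4),\ldots,(1,2)(n-2,n-1)\}$ is precisely the set $RP_1^{(2)}$ associated with the smaller group $(S_n)_n\cong S_{n-1}$ acting on $\{1,\ldots,n-1\}$, so $\mathrm{Cay}((S_n)_n,RP_1^{(2)}\cap (S_n)_n)\cong FJ_1(n-1,2)$; (b) the complement $RP_1^{(2)}\setminus (RP_1^{(2)}\cap (S_n)_n)=\{(1,2)(n-1,n)\}$ is a single involution, so $\mathrm{Cay}(S_n,\{(1,2)(n-1,n)\})$ is a disjoint union of copies of $K_2$ with $\lambda_2=1$. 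Combining (a), (b), Theorem \ref{Cay-thm}, and the induction hypothesis, every eigenvalue $\lambda$ of $FJ_1(n,2)$ that is not an eigenvalue of $B_n^{(1)}$ satisfies
$$\lambda\leq \lambda_2(FJ_1(n-1,2))+1=\lambda_2(B_{n-1}^{(1)})+1.$$

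By Lemma \ref{quotient}, the eigenvalues of $B_n^{(1)}$ are themselves eigenvalues of $FJ_1(n,2)$, so the problem reduces to the key matrix inequality
$$\lambda_2(B_n^{(1)})\geq \lambda_2(B_{n-1}^{(1)})+1.$$
Once this is proved, any eigenvalue of $FJ_1(n,2)$ strictly larger than $\lambda_2(B_n^{(1)})$ would have to coincide with the simple Perron eigenvalue $\lambda_1(B_n^{(1)})=|RP_1^{(2)}|$ (using that the authors have already observed $FJ_1(n,2)$ is connected), forcing $\lambda_2(FJ_1(n,2))=\lambda_2(B_n^{(1)})$ and closing the induction.

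The main obstacle is the matrix inequality $\lambda_2(B_n^{(1)})\geq \lambda_2(B_{n-1}^{(1)})+1$. Both matrices are almost tridiagonal: $B_n^{(1)}$ differs from a path-Laplacian-type tridiagonal matrix only in the $3\times 3$ top-left block (via the extra off-diagonal entries $b_{13}=b_{31}=2$) and in the first and last diagonal entries. I would attempt three strategies and use whichever yields the cleanest proof. First, expand $\det(\lambda I-B_n^{(1)})$ along the last row to get a three-term recurrence relating the characteristic polynomials of $B_n^{(1)}$, $B_{n-1}^{(1)}$, and a smaller principal minor, then invoke Sturm-sequence interlacing for symmetric (nearly) tridiagonal matrices. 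Second, build an explicit test vector $w\in\mathbb{R}^n$ with $w\perp\mathbf{1}_n$ by extending the second eigenvector of $B_{n-1}^{(1)}$ in a controlled way, and lower-bound its Rayleigh quotient; the shift of $+1$ should emerge from the uniform increase of the bulk diagonal entries from $n-3$ to $n-2$. Third, treat the atypical top-left $3\times 3$ block as a rank-bounded perturbation of a genuinely tridiagonal matrix whose eigenvalues admit the closed form $n-2+2\cos\theta$, and invoke Weyl-type perturbation bounds to transport the exact $+1$ gap between the unperturbed matrices to $B_n^{(1)}$ and $B_{n-1}^{(1)}$.
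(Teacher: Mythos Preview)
The statement you are trying to prove is labeled a \emph{conjecture} in the paper, not a theorem: the authors do not prove it. In fact, the paper lays out precisely the reduction you describe --- apply Theorem~\ref{Cay-thm} with $k=n$, identify $\mathrm{Cay}((S_n)_n,RP_1^{(2)}\cap(S_n)_n)\cong FJ_1(n-1,2)$ and $\mathrm{Cay}(S_n,RP_1^{(2)}\setminus(RP_1^{(2)}\cap(S_n)_n))\cong \tfrac{n!}{2}K_2$, and reduce everything to the matrix inequality $\lambda_2(B_n^{(1)})\geq \lambda_2(B_{n-1}^{(1)})+1$ (this is the paper's display (\ref{eq-27})) --- and then explicitly says ``it is not easy to identify the second eigenvalues of $B_n$ and $B_n^{(1)}$, so we leave it as an open problem.'' So your reduction is correct and coincides with the paper's own plan, but neither you nor the paper closes the argument.

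The genuine gap is therefore exactly the matrix inequality, and your proposal does not supply a proof of it; it only lists three possible attacks. A few concrete warnings about those attacks: $B_n^{(1)}$ is \emph{not} tridiagonal (the entries $b_{13}=b_{31}=2$ break the Jacobi structure), so a straightforward Sturm-sequence/three-term-recurrence argument does not apply as stated; you would first have to absorb that $3\times 3$ corner, and doing so cleanly is the whole difficulty. The Weyl-perturbation idea is unlikely to work for a sharp ``$+1$'' inequality: Weyl's bounds give $|\lambda_i(A+E)-\lambda_i(A)|\le\|E\|$, which typically cannot distinguish a gain of exactly $1$ from a smaller one when the perturbation $E$ coming from the corner block has norm larger than $1$. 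The test-vector route (bulk diagonal shifts by $+1$ from $n-3$ to $n-2$) is the most promising heuristic, but one has to control the boundary rows $1,2,3$ and $n$ of $B_n^{(1)}$, whose diagonals do \emph{not} shift uniformly, and this is precisely where a naive Rayleigh-quotient argument loses the needed $+1$. Until one of these is actually carried out, the inductive step --- and hence the conjecture --- remains open, as the paper itself states.
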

In order to prove Conjecture \ref{conj-1} by induction on $n$, we can assume that the result holds for $n-1$, i.e., $\lambda_2(FJ(n-1,2))=\lambda_2(B_{n-1})$.   By the arguments below Theorem \ref{Cay-thm} and (\ref{eq-8}), it suffices to show that
$$
\lambda_2(B_n)\geq \lambda_2(\mathrm{Cay}((S_n)_1,RP^{(2)}\cap (S_n)_1))+\lambda_2(\mathrm{Cay}(S_n,RP^{(2)}\setminus (RP^{(2)}\cap (S_n)_1))).
$$
Note that $\mathrm{Cay}((S_n)_1,RP^{(2)}\cap (S_n)_1)\cong FJ(n-1,2)$ and $\mathrm{Cay}(S_n,RP^{(2)}\setminus (RP^{(2)}\cap (S_n)_1))=\mathrm{Cay}(S_n,RP^{(2)}_1)=FJ_1(n,2)$. Thus, if Conjecture \ref{conj-2} is true, it remains to verify the following inequality:
\begin{equation}\label{eq-26}
\lambda_2(B_n)\geq \lambda_2(B_{n-1})+\lambda_2(B_n^{(1)}).
\end{equation}
Thus  we also need to  prove Conjecture \ref{conj-2}. As above, we can  assume $\lambda_2(FJ_1(n-1,2))=\lambda_2(B_{n-1}^{(1)})$, and  it suffices to show that 
\begin{equation}\label{eq-27}
\begin{aligned}
\lambda_2(B_n^{(1)})&\geq \lambda_2(\mathrm{Cay}((S_n)_n,RP^{(2)}_1\cap (S_n)_n))+\lambda_2(\mathrm{Cay}(S_n,RP^{(2)}_1\setminus (RP^{(2)}_1\cap (S_n)_n)))\\
&=\lambda_2(FJ_1(n-1,2))+\lambda_2(FJ_2(n,2))\\
&=\lambda_2(B_{n-1}^{(1)})+1,\\
\end{aligned}
\end{equation}
here we use the facts $\mathrm{Cay}((S_n)_n,RP^{(2)}_1\cap (S_n)_n)\cong FJ_1(n-1,2)$ and $\mathrm{Cay}(S_n,RP^{(2)}_1\setminus (RP^{(2)}_1\cap (S_n)_n))=FJ_2(n,2)\cong \frac{n!}{2}K_2$. Therefore, if one can prove (\ref{eq-26}) and  (\ref{eq-27}), then Conjecture \ref{conj-1} and Conjecture \ref{conj-2} follows immediately. However, it is not easy to identify the second eigenvalues of  $B_n$ and $B_n^{(1)}$, so we leave it as an open problem.

In accordance with Problem \ref{prob-quotient},  for  $r\geq 3$,   we pose the following  problem.
\begin{prob}\label{prob-2}
For $3\leq r<n$, does the quotient matrix  given in  (\ref{eq-6}) always contain the second eigenvalue of  the Full-Flag graph $FJ(n,r)=\mathrm{Cay}(S_n,RP^{(r)})$?
\end{prob}

On the other hand, for regular graphs, the smallest eigenvalue  is closely related to the independent number. Let $G$ be a $k$-regular graph $G$ with smallest eigenvalue $\tau$ and independent number $\alpha(G)$, the well-known Hoffman ratio bound asserts that  
$$
\alpha(G)\leq \frac{|V(G)|}{1-k/\tau},
$$
and that if the equality holds for some independent set $S$ with characteristic vector $v_S$, then $v_S-\frac{|S|}{|V(G)|}\mathbf{1}$ is an eigenvector of the eigenvalue $\tau$. By applying the Hoffman ratio bound to several important families of graphs belonging to classical $P$- or $Q$-polynomial association schemes (such as Johnson scheme, Hamming
scheme, Grassmann scheme) and  some famous Cayley graphs (such as the derangement graph) on the symmetric group $S_n$, variants of  Erd\H{o}s-Ko-Rado Theorems for sets, vector spaces, integer sequences and permutations have been obtained by various researchers (see  Godsil and Meagher \cite{Godsil2} for the detail).  Recently, Brouwer, Cioab\u{a}, Ihringer and McGinnis \cite{Brouwer1} determine the smallest eigenvalues of (distance-$j$) Hamming graphs, (distance-$j$) Johnson
graphs, and  the graphs of the relations of classical $P$- and $Q$-polynomial association schemes. Motivated by these works, it is interesting to consider the smallest eigenvalues of  normal Cayley graphs of  $S_n$. A natural question is that whether the method developed in this paper is valid for the smallest eigenvalues. However, it is not the case. According to the proof of Lemma \ref{quotient-lem2}, the quotient matrix $B_\Pi$ ($=B_\Pi^{0}$) of the normal Cayley graph $G_0=\mathrm{Cay}(S_n,T_0=T)$  has eigenvalue $|T|$ and $|T\cap \Gamma_{1}|-|T\cap \Gamma_{2,1}|$ (with multiplicity $n-1$). Thus we have $\lambda_n(B_\Pi)=\lambda_2(B_\Pi)=|T\cap \Gamma_{1}|-|T\cap \Gamma_{2,1}|$. If $n\geq 7$, we can verify that 
$\lambda_n(B_\Pi)=\lambda_2(B_\Pi)\geq 0$ holds for all connected normal Cayley graphs of $S_n$ with $\max_{\tau\in T}|\mathrm{supp}(\tau)|\leq 5$, which implies that $\lambda_n(B_\Pi)$ cannot be the smallest eigenvalue. Thus we  pose the following problem.
\begin{prob}
For normal Cayley graphs of $S_n$, are there some good general methods to  determine the smallest eigenvalues? 
\end{prob}

\end{document}